\long\def\symbolfootnote[#1]#2{\begingroup%
\def\thefootnote{\fnsymbol{footnote}}\footnote[#1]{#2}\endgroup}
\newcommand{\A}{\mathscr{A}}
\newcommand{\Aut}{\textup{Aut}}
\newcommand{\Oc}{{\mathbf{O}}}
\newcommand{\bx}{\ensuremath{\mathbf{x}}}
\newcommand{\by}{\ensuremath{\mathbf{y}}}
\newcommand{\F}{\mathbb{F}}
\newcommand{\suchthat}{\;\ifnum\currentgrouptype=16 \middle\fi|\;}
\def\imod#1{\allowbreak\mkern10mu({\operator@font mod}\,\,#1)}
\newtheorem{mainthm}{Theorem}
\newtheorem{theorem}{Theorem}[section]
\newtheorem{lemma}[theorem]{Lemma}
\newtheorem{corollary}[theorem]{Corollary}
\newtheorem{proposition}[theorem]{Proposition}
\newtheorem*{theorem*}{Theorem}
\theoremstyle{definition}
\numberwithin{equation}{section}
\newcommand{\ignore}[1]{}
\newcommand{\mynote}[1]{}
\begin{document}
\setcounter{section}{0}
\title[Polynomial maps with constants on split octonion algebras]{Polynomial maps with constants on \\ split octonion algebras}
\subjclass[2010]{16S50,11P05}
\today
\keywords{Polynomial maps, $G_2$, Split octonion algebra}
\author[Panja S.]{Saikat Panja}
\address{Indian Statistical Institute, Bangalore Center, Mysore road, Bangalore 560 059, India}
\email{panjasaikat300@gmail.com}
\author[Saini P.]{Prachi Saini}
\address{IISER Pune, Dr. Homi Bhabha Road, Pashan, Pune 411 008, India}
\email{prachi2608saini@gmail.com}
\author[Singh A.]{Anupam Singh}
\address{IISER Pune, Dr. Homi Bhabha Road, Pashan, Pune 411 008, India}
\email{anupamk18@gmail.com}
\thanks{The first-named author is supported by an NBHM postdoctoral fellowship, file number ending at R\&D-II/6746. The second-named author acknowledges the support of CSIR PhD scholarship number 09/0936(1237)/2021-EMR-I. The third-named author is funded by an NBHM research grant 02011/23/2023/NBHM(RP)/RDII/5955 for this research.}

\begin{abstract}
Let $\mathbf{O}(\mathbb{F})$ be the split octonion algebra over an algebraically closed field $\mathbb{F}$. For positive integers $k_1, k_2\geq 2$, we study surjectivity of the map $A_1(x^{k_1}) + A_2(y^{k_2}) \in \mathbf{O}(\mathbb{F})\langle x, y\rangle$ on $\mathbf{O}(\mathbb{F})$. For this, we use the orbit representatives of the ${G}_2(\mathbb{F})$-action on $\mathbf{O}(\mathbb{F}) \times \mathbf{O}(\mathbb{F}) $ for the tuple $(A_1, A_2)$, and characterize the ones which give a surjective map.
\end{abstract}
\dedicatory{To Professor B. Sury for his immense contribution to Algebra and Number Theory}
\maketitle
\section{Introduction}\label{sec:intro}
Lagrange's four-square theorem states that a non-negative integer can be written as a sum of four squares. Generalizing this is the \textit{Waring problem}, which asks whether every natural number can be expressed as a sum of finitely many powers of non-negative integers. More formally, for a fixed integer $ k \geq 2 $, the problem asks for the smallest number $g(k)$ such that every natural number $ n $ can be written as  
$$
n = X_1^k + X_2^k + \dots + X_{g(k)}^k
$$
for some integers $ X_1, X_2, \dots, X_{g(k)} $.  
This was initially conjectured by {Waring} in 1770, the existence was proved by {Hilbert} in 1909, and values of $g(k)$ for various $k$ are being determined.  

Several natural generalizations have been considered; here, we mention a few. Throughout the article, for a ring $R$, the algebra of $m\times m$ matrices with entries from $R$ will be denoted as $M_m(R)$. In 1985, Newman \cite{Newman1985} proved that every element of $M_2(\mathbb{Z})$ is a sum of $3$ squares. Vaserstein in 1986, \cite{Vaserstein1986} proved that every element in $M_n(\mathbb{Z})$ is a sum of three squares (also see \cite{Vaserstein1987}. The more general case of this is considered in a recent article \cite{koo2025integral}. Richman in 1987, \cite{Richman1987} proved that for a commutative associative ring $R$ with unity and positive integers $n, k\geq 2$, an element $A\in M_n(R)$ is a sum of $k$-th powers if and only if it is a sum of seven $k$-th powers. Katre and Khule, \cite{KatreKhule00} proved that if $R$ is an order in an algebraic number field $K$, then for $n\geq k\geq2$, every $n\times n$ matrix over $R$ is a sum of at most $7$ $k$-th powers if and only if $(k,\mathrm{disc}(R)) = 1$. Kishore and Singh in 2024 (see \cite{KishoreSingh2022}) proved that for a positive integer $k\geq 1$ and a finite field $\F_q$ of a sufficiently large order (depending solely on $k$), every element of $M_n(\F_q)$ is a sum of two squares (also see \cite{PanjaSainiSingh2023} for further generalisation). Waring problems for upper-triangular matrix algebra have been considered in \cite{KaushikSingh2023}.

The Waring problem has further generalizations: the study of polynomial maps on algebras. Let $\F$ be a field. For an $\F$-algebra $\A$ and an element $w\in \mathcal{F}_n := \F\langle X_1, \ldots, X_n\rangle$, the free algebra of rank $n$, we get a natural map
\begin{align*}
    \widetilde{w}\colon\A^n\longrightarrow \A,
\end{align*}
by substitution. These are known as \emph{polynomial maps} on $\A$. Kaplansky, in 1957, conjectured that the image of a multilinear map on the matrix algebra over an infinite field is a vector space (see \cite{Kaplansky1957}). This has attracted a lot of attention in recent years and is known as L'vov-Kaplansky conjecture. In \cite{BeMaRo12}, Kanel-Belov, Malev, and Rowen proved the conjecture for $M_2(K)$ where $K$ is a quadratically closed field. Br\v{e}sar in 2020, \cite{Bresar2020} proved that for an algebraically closed field $\F$ of characteristic $0$, a polynomial $f\in\mathcal{F}_m$ which is neither an identity nor a central polynomial of $M_n(\F)$, the trace-zero matrices can be written as sum of four elements from the set $f(A)-f(A)$. One can further ask whether four is the optimal choice. Answers to these questions have been described in \cite{BresarSemerl2021} and \cite{BresarSemerl2022}. Similar questions for upper-triangular matrix algebras have been addressed in \cite{PaPr23} and \cite{Chen2024}.

Let $\F$ be an algebraically closed field and $\Oc(\F)$ denote the octonion algebra over $\F$ which is non-commutative and non-associative. In this article, we concentrate on the map induced by the following polynomial of the free algebra $\Oc * \mathcal{F}_n$,
\begin{align*}
    A_1(X_1^{k_1}) + A_2(X_2^{k_2}) + \cdots + A_n(X_n^{k_n}),
\end{align*}
where $A_i\in \Oc(\F)$, and $k_i\geq 2$ are positive integers, for $1\leq i\leq n$. The maps given by elements of $\Oc * \mathcal{F}_n$ are called \textit{polynomial maps with constant}. In \cite{LopatinZubkov2024b}, Lopatin and Zubkov have considered the equations of the form $A(XB) = C$ and $A(BX) = C$ for $A, B, C\in \Oc(\F)$, and in \cite{lopatin2024polynomial} the polynomial equations of $1$ variable are considered. The images of multilinear and semihomogeneous polynomials on the algebra of octonions have been considered in \cite{BelovMalevPinesRowen2024} by Kanel-Belov, Malev, Pines, and Rowen.

In an earlier work \cite[Theorem A]{PanjaSainiSingh2023b}, it is proved that the polynomial maps induced by $A_1X^{k_1} + A_2Y^{k_2}$ on $M_2 (K)$, where $A_1, A_2 \in M_2(K)$ both non-zero, is surjective if and only if $A_1$ and $A_2$ can be simultaneously conjugated to a pair of matrices such that both the matrices do not have the same zero rows. 
This article proves an analogous result for the octonion algebra $\Oc(\F)$ when $A_1, A_2\in\Oc(\F)$ are two non-zero matrices. 
It is known that the automorphism group of the octonion algebra $\Oc(\F)$ is $\F$-points of groups of type $G_2$, which for simplicity we denote by $G_2(\F)$. We refer a reader to~\cite{SpringerVeldkamp2000} for the details on octonion algebra and the groups of type $G_2$. 
Note that to decide whether a polynomial map with constants is surjective, it is enough to check when the coefficients of the polynomial are chosen up to the $\Aut(\Oc(\F))$-action (which is the exceptional group of type $G_2$) on $\Oc(\F)\times\Oc(\F)$ via coordinate-wise action. We apply this strategy to obtain our results.  So, to conclude about the surjectivity of the map $A_1(X^{k_1}) + A_2(Y^{k_2})$, we only consider the tuples $(A_1, A_2)$ which are representatives of the orbits of the $G_2$-action on $\Oc(\F)^2$. The orbits under this action are described in \cite{LopatinZubkov2025}, which we make use of for our purpose. By $\mathbf{0}$, we denote the zero vector in $\F^3$. We have the following: 

\begin{mainthm}\label{mainthm}
Let $\F$ be an algebraically closed field, and $\Oc(\F)$ be the split octonion algebra over $\F$. Then, the map induced by $A_1(X^{k_1}) + A_2(Y^{k_2})$ on $\Oc(\F)$, where $A_1, A_2\in\Oc(\F)\backslash \{0\}$, is surjective if and only if the pair $(A_1, A_2)$ under $G_2(\F)$-action does not represent one of the following pairs:
\begin{multicols}{2}
\begin{enumerate}
\item $\left(\begin{pmatrix} \alpha_1 & \bf 0\\ \mathbf{0} & 0
\end{pmatrix},\begin{pmatrix} \beta_1 & \bf 0\\ \bf 0 & 0
\end{pmatrix} \right)$
\item$\left(\begin{pmatrix} 0 & \bf 0\\ \mathbf{0} & \alpha_8
\end{pmatrix},\begin{pmatrix} 0 & \bf 0\\ \bf 0 & \beta_8
\end{pmatrix} \right)$
\item $\left(\begin{pmatrix} \alpha_1 & \bf 0\\ \mathbf{0} & 0
\end{pmatrix},\begin{pmatrix} \beta_1 & (1,0,0)\\ (0,\beta_6,0) & 0
\end{pmatrix} \right)$
\item $\left(\begin{pmatrix} 0 & \bf 0\\ \mathbf{0} & \alpha_8
\end{pmatrix},\begin{pmatrix} 0 & (1,0,0)\\ (0,\beta_6,0) & \beta_8
\end{pmatrix} \right)$
\item $\left(\begin{pmatrix} 0 & (1,0,0)\\ \mathbf{0}& 0
\end{pmatrix}, \begin{pmatrix} 0 & (1,0,0)\\ \mathbf{0} & 0  \end{pmatrix} \right)$
\item  $\left(\begin{pmatrix} 0 & (1,0,0)\\ \mathbf{0}& 0 \end{pmatrix}, \begin{pmatrix} \beta_1 & (0,1,0)\\ \mathbf{0} & 0
\end{pmatrix} \right)$
\item $\left(\begin{pmatrix} 0 & (1,0,0)\\ \mathbf{0} & 0
\end{pmatrix},\begin{pmatrix} 0 & (0,1,0)\\ \mathbf{0} & \beta_8
\end{pmatrix} \right)$
\item $\left(\begin{pmatrix} 0 & (1,0,0)\\ \mathbf{0} & 0
\end{pmatrix},\begin{pmatrix} 0 & \mathbf{0}\\ (0,1,0) & 0    \end{pmatrix} \right)$
\end{enumerate}
\end{multicols}
where $\alpha_1,\alpha_8,\beta_1,\beta_6, \beta_8\in \F$. 
\end{mainthm}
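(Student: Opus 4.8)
The plan is to work throughout in the Zorn vector–matrix model, writing an octonion as $\left(\begin{smallmatrix}a&\mathbf v\\\mathbf w&b\end{smallmatrix}\right)$ with $a,b\in\F$ and $\mathbf v,\mathbf w\in\F^3$, trace $t(x)=a+b$ and norm $n(x)=ab-\mathbf v\cdot\mathbf w$. Since octonions are alternative, hence power-associative, $x^k$ is unambiguous, and the image of the map attached to a pair $(A_1,A_2)$ is exactly $L_{A_1}(S_{k_1})+L_{A_2}(S_{k_2})$, where $L_A$ denotes left multiplication and $S_k:=\{z^k:z\in\Oc(\F)\}$. Every automorphism $\varphi$ of $\Oc(\F)$ fixes $1$, commutes with taking powers (so $\varphi(S_k)=S_k$) and preserves the norm, and satisfies $\varphi(L_Az)=L_{\varphi(A)}\varphi(z)$; hence surjectivity of the map depends only on the $G_2(\F)$–orbit of $(A_1,A_2)$. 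So I would fix once and for all the complete list of orbit representatives of the $G_2(\F)$–action on $\Oc(\F)\times\Oc(\F)$ from \cite{LopatinZubkov2025}, and decide surjectivity representative by representative.

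\textbf{The set of $k$-th powers.} Next I would pin down $S_k$. From the degree-two identity $z^2=t(z)z-n(z)$ one gets $z^k=c_kz+d_k\cdot 1$ with $c_k,d_k\in\F$ depending only on $t(z)$ and $n(z)$; using that $\F$ is algebraically closed one then checks (essentially the one-variable analysis of \cite{lopatin2024polynomial}) that $S_k=\Oc(\F)\setminus\mathcal E_k$, where $\mathcal E_k$ is the set of nonzero nilpotent elements if $\operatorname{char}\F\nmid k$, and a (still proper, Zariski-closed) set of non-semisimple non-scalar elements if $\operatorname{char}\F\mid k$. The facts I would extract: $S_k$ is Zariski-dense, contains $0$ and $1$ and every regular semisimple element, while $\mathcal E_k$ is contained in the nilpotent cone $\{t=0,\ n=0\}$ (resp.\ in the quadric $\{t^2=4n\}$), a set whose affine linear subspaces have dimension at most $3$ (resp.\ are bounded well below the ambient dimension).

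\textbf{Non-surjectivity of the eight families.} For each of the pairs (1)--(8) I would compute the subspace $L_{A_1}(\Oc(\F))+L_{A_2}(\Oc(\F))$ directly in the Zorn model and verify that it is a \emph{proper} subspace; since the image is contained in it, the map cannot be surjective. Concretely, in (1) and (2) the image lies in $\bigl\{\left(\begin{smallmatrix}*&*\\\mathbf 0&0\end{smallmatrix}\right)\bigr\}$, resp.\ $\bigl\{\left(\begin{smallmatrix}0&\mathbf 0\\ *&*\end{smallmatrix}\right)\bigr\}$; in the mixed families (3),(4) the lower-left vector entry is trapped in a fixed plane; in (5)--(8) the left-multiplication images are so small (dimensions $\le 3$, or the $(2,2)$-entry is forced to vanish, or a similar collapse) that the sum has dimension $\le 7$. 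In all eight $n(A_1)=n(A_2)=0$, which is what permits these degeneracies.

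\textbf{Surjectivity for the remaining representatives, and the main obstacle.} For every remaining representative I would prove surjectivity, organizing by the ``size'' of $L_{A_i}(\Oc(\F))$, which for $A_i\ne 0$ has dimension $3$, $4$, or $8$. If some $A_i$, say $A_1$, is invertible, then for a target $W$ I solve $L_{A_1}u+L_{A_2}v=W$ by letting $v$ run through $S_{k_2}$: then $u=L_{A_1}^{-1}(W-L_{A_2}v)$ runs through a Zariski-dense subset of the affine flat $F_W=L_{A_1}^{-1}W+V'$ with $V':=L_{A_1}^{-1}L_{A_2}(\Oc(\F))$, and surjectivity follows once $F_W\not\subseteq\mathcal E_{k_1}$; by the dimension bound on flats inside the nilpotent cone this is automatic as soon as $\dim L_{A_2}(\Oc(\F))\ge 4$, while the case $\dim L_{A_2}(\Oc(\F))=3$ has to be handled using multiplicativity of the norm — it forces $L_{A_1}^{-1}$ to carry isotropic subspaces to isotropic subspaces, which eventually traps every nilpotent of $V'^{\perp}$ inside $V'$ and so prevents $F_W$ from lying in the cone. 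If instead $n(A_1)=n(A_2)=0$ but $(A_1,A_2)$ is not one of (1)--(8), I would check the finitely many such representatives individually: their left-multiplication images are small but, unlike in the eight exceptional families, positioned so that $L_{A_1}(S_{k_1})$ and $L_{A_2}(S_{k_2})$ still add up to everything (for instance $\operatorname{diag}(\alpha,0)$ paired with $\operatorname{diag}(0,\beta)$ fills the two complementary four-dimensional blocks). The step I expect to be the main obstacle is precisely this last layer: for each doubly-singular representative one must show that the flats $F_W$ escape the nilpotent cone for \emph{every} target $W$, which is exactly what separates the surjective configurations from the eight forbidden ones; a secondary wrinkle is making the description of $\mathcal E_k$, hence the flat-versus-cone estimates, uniform in $\operatorname{char}\F$.
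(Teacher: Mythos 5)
Your route is genuinely different from the paper's. The paper never characterizes the power set $S_k$ globally; instead its workhorse is an explicit lemma (Lemma 2.4, ``lem:scalar-k'') that any element $\left(\begin{smallmatrix}\alpha&\mathbf b\\\mathbf c&\delta\end{smallmatrix}\right)$ with $\alpha\neq\delta$ and $\langle\mathbf b,\mathbf c\rangle=0$ equals $\alpha_1X^{k}$ for a constructed $X$. The surjectivity proofs then proceed by exhibiting, for each orbit type, a $Y$ that pushes $A-A_2Y^{k_2}$ into that normal form and then solving for $X$ via left-division by $\bar A_1$; this is uniform in the characteristic. Your plan replaces all of this with a geometric argument: identify $S_k$ as the complement of a Zariski-closed locus $\mathcal E_k$, show the affine flat $F_W=L_{A_1}^{-1}W+V'$ escapes $\mathcal E_{k_1}$ by a dimension count, and intersect two dense constructible sets. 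Structurally both proofs reduce to the same $G_2$-orbit list and then handle (i) some $A_i$ invertible and (ii) both singular, and your identification of (ii) as the hard part matches where the paper spends Section 4.

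There are, however, genuine gaps. First, a minor factual one: for a nonzero singular octonion $A$, $\dim L_A(\Oc(\F))$ is always exactly $4$ (never $3$) — $\ker L_A$ is a $4$-dimensional totally isotropic subspace — so the ``$\dim=3$'' subcase you propose to treat ``using multiplicativity of the norm'' does not occur, and that part of the plan is vacuous. Second, and more seriously, the characteristic-dividing case is not a ``secondary wrinkle'' but a real obstruction to the dimension count. When $\operatorname{char}\F\mid k$, the exceptional set $\mathcal E_k$ is the non-scalar part of the quadric $\{t^2=4n\}$; writing $t^2-4n=(a-b)^2+4\,\mathbf v\cdot\mathbf w$ shows this form has a $1$-dimensional radical, so the quadric contains $4$-dimensional affine subspaces. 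Since $\dim V'=\dim L_{A_2}(\Oc(\F))=4$ when $A_2$ is singular and nonzero, the estimate ``$\dim F_W\geq4\Rightarrow F_W\not\subseteq\mathcal E_{k_1}$'' simply fails in this regime, and you would need a separate argument (e.g.\ showing $V'\not\subseteq\{t=0\}$, which is a computation you have not done). The paper's explicit construction sidesteps this entirely because Lemma 2.4 is characteristic-free. Third, for the both-singular case you only offer ``check the finitely many representatives individually,'' which is precisely what the paper's Section 4 carries out in full (Lemmas 4.1, 4.2, 4.6 and Propositions 4.3--4.10); as written your proposal defers the bulk of the theorem's content rather than proving it.
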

The octonion algebra and the automorphism group $G_2$ are briefly recalled in \cref{sec:prelim}.
We also recall the representative of the orbit spaces of $G_2$-action on $\Oc(\F)^2$ in \cref{sec:prelim} following the work \cite{LopatinZubkov2025}. 
In \cref{sec:invert-new}, we tackle the proof of \cref{mainthm} when either $A_1$ or $A_2$ is invertible. 
The remaining of the cases for \cref{mainthm} are proved in \cref{sec:thm-2}.

\section{Representatives of the \texorpdfstring{$G_2$}{G2}-orbits on pair of split octonions}\label{sec:prelim}

In this section, we recall the octonion algebra over an algebraically closed field $\F$ of arbitrary characteristic $p\geq 0$ and their automorphism group $G_2(\F)$ to set notation for the rest of the article. We refer a reader to the excellent book \cite{SpringerVeldkamp2000} on the subject for further details. 

\subsection{The octonion algebra and its automorphism group} 

Let $\F$ be an algebraically closed field. The split octonion algebra over $\F$, denoted by $\Oc$ is
\begin{align*}
\Oc(\F)=\left\{\begin{pmatrix} \eta & \mathbf{x}\\ \mathbf{y} &\zeta \end{pmatrix}\middle| \eta,\zeta\in \F, \mathbf{x},\mathbf{y}\in \F^3\right\}.
\end{align*}
The addition is defined entry-wise, whereas the multiplication is defined using the following rule
\begin{align*}
\begin{pmatrix} \eta & \mathbf{x}\\ \mathbf{y} &\zeta
\end{pmatrix} \begin{pmatrix} \eta' & \mathbf{x}'\\     \mathbf{y}' &\zeta' \end{pmatrix} =
\begin{pmatrix} \eta\eta' + \langle \mathbf{x}, \mathbf{y}' \rangle & \eta\bx'+\zeta'\bx + \by\wedge \by'\\ 
\zeta\by'+\eta'\by + \bx\wedge \bx' &\zeta\zeta'+\langle\by,\bx'\rangle
\end{pmatrix}.
\end{align*}
Here $\langle \bx,\by\rangle = \sum\limits_{i=1}^{3}x_iy_i$, where $\bx=(x_1,x_2,x_3)$ and $\by=(y_1,y_2,y_3)$, and the exterior power is given by 
$$\langle \bx\wedge\by,\mathbf{z}\rangle=\det(\bx,\by,\mathbf{z}).$$
The set $\Oc(\F)$ is a non-associative and non-commutative algebra. The linear involution for an element $A=\begin{pmatrix} \eta & \mathbf{x}\\      \mathbf{y} &\zeta  \end{pmatrix}\in\Oc(\F)$ is given by \begin{align} \Bar{A}=\begin{pmatrix}  \zeta & -\mathbf{x}\\  -\mathbf{y} &\eta  \end{pmatrix}.
\end{align}
The norm for $A$, $N(A)=\eta\zeta-\langle\mathbf{x},\mathbf{y}\rangle$ and trace for $A$ is $T(A)=\eta+\zeta$. It turns out that over an algebraically closed field $\F$, there is a unique octonion algebra up to isomorphism, which is called split octonion (the one described above). The automorphisms of the split octonion algebra give the exceptional group of type $G_2$ (which is split), and $\F$-points of $G_2$ are denoted as $G_2(F)$. We abuse notation and write $G_2(\F)$ and $\Aut(\Oc)$  interchangeably. 

\subsection{Some Properties:}
Let $A, B\in \Oc(F)$. Then we have, 
\begin{enumerate}
 \item Power Associativity: \begin{align}
        A(A^n)=(A^n)A=A^{n+1}, (n\geq 0)
    \end{align}
\item Flexible law:
    \begin{align}
       A(BA)=(AB)A=N(A)B.
    \end{align}
\item For any two elements $A,B\in \Oc(\F)$, with $N(A)\neq 0$, one has
\begin{align}\label{eq:invert}
    A^{-1}(AB)=B,
\end{align}
\end{enumerate}

Note that the power associativity law helps us justify the computation of $A^k$ for any $A$ even though $\Oc$ is non-associative. From the alternative laws (see Lemma 1.4.2 \cite{SpringerVeldkamp2000}), we have $(AB)B = A(BB)= A(B^2)$. To understand the surjectivity of $A_1(X^{k_1}) + A_2(Y^{k_2})$ (equivalently $A_1X^{k_1} + A_2Y^{k_2}$) on $\Oc$ we may as well look at the image of $\sigma(A_1)(X^{k_1}) + \sigma(A_2)(Y^{k_2})$ where $\sigma\in \Aut(\Oc)=G_2$. Thus, it is enough to compute the images of polynomials for representatives of $(A_1, A_2)$ under the action of $\Aut(\Oc)=G_2$. The orbit representatives are determined in \cite[Theorem 4.1]{LopatinZubkov2025}), which we list here.

\begin{proposition}[Theorem 4.1, \cite{LopatinZubkov2025}] \label{pro:reprsen-simul}
Let $\F$ be an algebraically closed field. Following are the orbit representative of the $G_2(\F)$-action on $\Oc(\F)\times\Oc(\F)$, via $g\cdot(A_1,A_2)=(g(A_1),g(A_2))$;
\begin{enumerate}
\item[$\mathrm{(DD)}$] $\left(\begin{pmatrix}   \alpha_1 & \mathbf{0}\\  \mathbf{0} & \alpha_8 \end{pmatrix},
\begin{pmatrix} \beta_1 & \mathbf{0}\\ \mathbf{0} & \beta_8 \end{pmatrix}\right)$,
\item[$\mathrm{(EK_1)}$] $\left(\begin{pmatrix} \alpha_1 & \bf0\\ \bf0 & \alpha_1 \end{pmatrix},
\begin{pmatrix} \beta_1 & (1,0,0)\\ \mathbf{0} & \beta_1
\end{pmatrix}\right)$,
\item[$\mathrm{(FK)}$] $\left(\begin{pmatrix} \alpha_1 & \bf0\\ \bf0 & \alpha_8 \end{pmatrix},
\begin{pmatrix} \beta_1 & (1,0,0)\\ \mathbf{0} & \beta_8
\end{pmatrix}\right)$ with $\alpha_1\neq \alpha_8$,
\item[$\mathrm{(FN)}$] $\left(\begin{pmatrix} \alpha_1 & \bf0\\ \bf0 & \alpha_8 \end{pmatrix},
\begin{pmatrix} \beta_1 & (1,0,0)\\ (\beta_5,0,0) & \beta_8 \end{pmatrix}\right)$
with $\alpha_1\neq \alpha_8$ and $\beta_5\neq 0$,
\item[$\mathrm{(FP)}$] $\left(\begin{pmatrix}
\alpha_1 & \bf0\\ \bf0 & \alpha_8 \end{pmatrix},
\begin{pmatrix} \beta_1 & (1,0,0)\\ (0,1,0) & \beta_8   
\end{pmatrix}\right)$
with $\alpha_1\neq \alpha_8$,
\item[$\mathrm{(K_1E)}$] $\left(\begin{pmatrix} \alpha_1 & (1,0,0)\\ \bf 0 & \alpha_1 \end{pmatrix},
\begin{pmatrix} \beta_1 & \bf 0\\ \bf 0 & \beta_1
\end{pmatrix}\right)$,
\item[$\mathrm{(K_1F})$] $\left(\begin{pmatrix} \alpha_1 & (1,0,0)\\ \bf 0 & \alpha_1 \end{pmatrix},
\begin{pmatrix} \beta_1 & \bf 0\\ \bf 0 & \beta_8 \end{pmatrix}\right)$
with $\beta_1\neq \beta_8$,
\item[$\mathrm{(K_1L_1)}$] $\left(\begin{pmatrix} \alpha_1 & (1,0,0)\\ \bf 0 & \alpha_1 \end{pmatrix},
\begin{pmatrix} \beta_1 & (\beta_2,0,0)\\ \bf 0 & \beta_1
\end{pmatrix}\right)$ with $\beta_2\neq 0$,
\item[$\mathrm{(K_1L^T)}$] $\left(\begin{pmatrix} \alpha_1 & (1,0,0)\\ \bf 0 & \alpha_1 \end{pmatrix},
\begin{pmatrix} \beta_1 & \bf 0\\ (\beta_5,0,0) & \beta_8
\end{pmatrix}\right)$ with $\beta_5\neq 0$,
\item[$\mathrm{(K_1M)}$] $\left(\begin{pmatrix}
\alpha_1 & (1,0,0)\\ \bf 0 & \alpha_1 \end{pmatrix},
\begin{pmatrix} \beta_1 & (0,1,0)\\ \bf 0 & \beta_8
\end{pmatrix}\right)$,
\item[$\mathrm{(K_1M_1^T)}$] $\left(\begin{pmatrix}
\alpha_1 & (1,0,0)\\ \bf 0 & \alpha_1  \end{pmatrix},
\begin{pmatrix} \beta_1 & \bf 0\\ (0,1,0) & \beta_1
\end{pmatrix}\right)$,
\end{enumerate}
for all $\alpha_1,\alpha_2,\beta_1,\beta_2,\beta_5,\beta_8\in \F$.
\end{proposition}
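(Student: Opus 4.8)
The plan is to run the orbit--stabilizer method in two layers. First I would classify the $G_2(\F)$-orbits on a single copy of $\Oc(\F)$; then, for a fixed normal form $A_1$, I would determine the stabilizer $\mathrm{Stab}_{G_2}(A_1)$ and classify its orbits on $\Oc(\F)$, using that the $G_2(\F)$-orbits of pairs $(A_1,A_2)$ whose first coordinate lies in a given $G_2(\F)$-orbit correspond bijectively to the $\mathrm{Stab}_{G_2}(A_1)$-orbits on the second coordinate. The working tools are: the invariants $T$ and $N$; the fact that every automorphism fixes $e=\begin{pmatrix}1&0\\0&1\end{pmatrix}$; the copy of $\SL_3(\F)$ in $G_2(\F)$ formed by the block-diagonal automorphisms, which fix $\eta,\zeta$ and act on the off-diagonal vectors $\bx,\by$ by the standard representation and its dual; and the \emph{switch} involution $\begin{pmatrix}\eta&\bx\\\by&\zeta\end{pmatrix}\mapsto\begin{pmatrix}\zeta&\by\\\bx&\eta\end{pmatrix}$, which one checks from the multiplication rule is an automorphism and which interchanges the two diagonal idempotents.

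For a single $A\in\Oc(\F)$, I would start from $A^2-T(A)A+N(A)e=0$, so that $\F[A]$ is $\F$, $\F\times\F$, or $\F[\varepsilon]/(\varepsilon^2)$; subtracting $\tfrac12 T(A)e$ reduces to a trace-zero $B$ with $B^2=-N(B)e$, which is $0$, is semisimple ($N(B)\neq0$), or is a nonzero nilpotent ($N(B)=0$). Invoking that $G_2(\F)$ is transitive on the nontrivial (trace-one, norm-zero) idempotents — one orbit, by a dimension count against its $\SL_3(\F)$-stabilizer — a semisimple $A$ becomes $\diag(\alpha_1,\alpha_8):=\begin{pmatrix}\alpha_1&0\\0&\alpha_8\end{pmatrix}$; and invoking that $G_2(\F)$ is transitive on the nonzero vectors of the null cone of the trace-zero space (the cone over the closed $G_2$-orbit in its projectivization), a nilpotent $A$ becomes $\begin{pmatrix}\alpha_1&(1,0,0)\\0&\alpha_1\end{pmatrix}$. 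This gives the two one-element families, the diagonal one (including scalars) and type $\mathrm{K_1}$.

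For pairs I would split on the type of $A_1$. If $A_1=\alpha_1 e$, then $\mathrm{Stab}_{G_2}(A_1)=G_2(\F)$ and $(A_1,A_2)$ is pinned down by the one-element orbit of $A_2$, giving the scalar subcase of $(\mathrm{DD})$ and the case $(\mathrm{EK_1})$. If $A_1=\diag(\alpha_1,\alpha_8)$ with $\alpha_1\neq\alpha_8$, then the spectral idempotent $(\alpha_1-\alpha_8)^{-1}(A_1-\alpha_8 e)$ lies in $\F[A_1]$, so the stabilizer is exactly the block-diagonal $\SL_3(\F)$; its orbits on $\Oc(\F)$ are governed by $\eta,\zeta$ and the single invariant $\langle\bx,\by\rangle$, giving the strata $(\bx,\by)=(0,0)$, $\bx\neq0=\by$, $\bx=0\neq\by$, $\bx,\by\neq0$ with $\langle\bx,\by\rangle\neq0$, $\bx,\by\neq0$ with $\langle\bx,\by\rangle=0$; the second and third coincide after applying the switch, and the representatives are $(\mathrm{DD})$, $(\mathrm{FK})$, $(\mathrm{FN})$ (the pairing surviving as the modulus $\beta_5$), $(\mathrm{FP})$. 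If $A_1=\begin{pmatrix}\alpha_1&(1,0,0)\\0&\alpha_1\end{pmatrix}$, then $\mathrm{Stab}_{G_2}(A_1)$ is the stabilizer of the null vector $\begin{pmatrix}0&(1,0,0)\\0&0\end{pmatrix}$, an $8$-dimensional subgroup related to a maximal parabolic of $G_2$, and I would enumerate its orbits on $\Oc(\F)$ using $T(A_2)$, $N(A_2)$, the polarized-norm value $b\!\left(\begin{pmatrix}0&(1,0,0)\\0&0\end{pmatrix},A_2\right)$ (which reads off the first lower-left coordinate of $A_2$, hence the modulus $\beta_5$ of $(\mathrm{K_1L^T})$), and, on the plane $\F e+\F\begin{pmatrix}0&(1,0,0)\\0&0\end{pmatrix}$, the coefficient along the fixed vector, which is literally invariant (the modulus $\beta_2$ of $(\mathrm{K_1L_1})$); this produces $(\mathrm{K_1E})$, $(\mathrm{K_1F})$, $(\mathrm{K_1L_1})$, $(\mathrm{K_1L^T})$, $(\mathrm{K_1M})$, $(\mathrm{K_1M_1^T})$.

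The hard part is this last case: the stabilizer of a null vector is non-reductive with a sizeable unipotent radical, so the orbit enumeration on the $8$-dimensional $\Oc(\F)$ is not a classical reductive-group problem and has to be done by an explicit description of that subgroup acting on the Zorn-matrix entries, carefully tracking which directions can be rescaled and which moduli survive. A secondary difficulty is uniformity in $\operatorname{char}\F$: when $\operatorname{char}\F=2$ the identity $e$ lies in the trace-zero hyperplane and the polarized norm degenerates there, so both "subtract $\tfrac12 T(A)e$" and the identification of the trace-zero space with a non-degenerate $7$-dimensional quadratic space fail, and the trichotomy and the transitivity facts must be re-proved with $G_2(\F)$ acting on the $6$-dimensional quotient; $\operatorname{char}\F=3$ also merits a separate check. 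After that, matching the normal forms to the eleven families in the statement is just bookkeeping.
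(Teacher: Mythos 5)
First, note that the paper does not prove this statement at all: it is imported verbatim as Theorem 4.1 of \cite{LopatinZubkov2025}, so there is no internal proof to measure your argument against; the only question is whether your proposal would constitute an independent proof. Your overall strategy is the natural one and its reductive half is essentially right: the reduction of pairs to $\mathrm{Stab}_{G_2}(A_1)$-orbits on the second coordinate, the single-element trichotomy via $A^2-T(A)A+N(A)e=0$, the identification of the stabilizer of a split idempotent with the block-diagonal $\SL_3(\F)$ acting on $\bx\oplus\by$ by the standard representation and its dual, and the switch map (which is indeed an automorphism, as one checks from the multiplication rule) together do yield the families $\mathrm{(DD)}$, $\mathrm{(EK_1)}$, $\mathrm{(FK)}$, $\mathrm{(FN)}$, $\mathrm{(FP)}$, with $\langle\bx,\by\rangle$ correctly surviving as the modulus $\beta_5$.

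The genuine gap is that the case carrying most of the theorem is explicitly left undone. When $A_1=\alpha_1 e+n$ with $n=\begin{pmatrix}0&(1,0,0)\\ \mathbf{0}&0\end{pmatrix}$, the stabilizer is the $8$-dimensional non-reductive stabilizer of the null vector $n$, and six of the eleven families ($\mathrm{(K_1E)}$ through $\mathrm{(K_1M_1^T)}$) are precisely the orbits of this group on $\Oc(\F)$. You list plausible invariants ($T$, $N$, the polarized norm against $n$, the coefficient along $n$ in $\F e+\F n$), which correctly explain why the moduli $\beta_2$, $\beta_5$, $\beta_1$, $\beta_8$ must appear, but invariants only separate orbits; they do not show that the six listed normal forms exhaust all orbits, and you concede that this enumeration ``has to be done by an explicit description of that subgroup acting on the Zorn-matrix entries'' without doing it. That computation (parametrizing the unipotent radical and tracking which entries can be normalized) is the substance of the classification, not bookkeeping. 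The same applies to exhaustiveness in the $\SL_3$ case only modulo a routine but unstated orbit analysis of $V\oplus V^*$, and to characteristic $2$ (and $3$): the statement is for arbitrary algebraically closed $\F$, your reduction uses $\tfrac12T(A)e$ and the nondegenerate trace-zero $7$-space, and you flag but do not carry out the replacement argument. As it stands the proposal is a credible plan rather than a proof of the cited classification.
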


\subsection{Root of some octonion elements}
We note down some formula describing the $\ell$-th power, this makes sense since the algebra $\Oc(\F)$ is power-associative, as follows:
\begin{enumerate}
\item For an element $X= \begin{pmatrix}    \alpha_X & \mathbf{b}_X\\  \mathbf{c}_X & \delta_X \end{pmatrix}\in \Oc(\F)$, when $\langle \mathbf{b}_X, \mathbf{c}_X \rangle = 0$.
We have, $X^2  = \begin{pmatrix} \alpha_X & \mathbf{b}_X\\  \mathbf{c}_X & \delta_X \end{pmatrix} \begin{pmatrix} \alpha_X & \mathbf{b}_X\\        \mathbf{c}_X & \delta_X  \end{pmatrix} = \begin{pmatrix}  \alpha_X^2 & (\alpha_X+\delta_X)\mathbf{b}_X\\ (\alpha_X+\delta_X)\mathbf{c}_X & \delta_X^2 \end{pmatrix}$.
By induction, one can prove that, for a positive integer $\ell$,
\begin{align}
\begin{pmatrix} \alpha_X & \mathbf{b}_X\\ \mathbf{c}_X & \delta_X   \end{pmatrix}^\ell = \begin{pmatrix} \alpha_X^\ell & \left(\sum\limits_{i=0}^\ell\alpha_X^i\delta_X^{\ell-i}\right) \mathbf{b}_X \\ \left(\sum\limits_{i=0}^\ell\alpha_X^i\delta_X^{\ell-i}\right)\mathbf{c}_X & \delta_X^\ell \end{pmatrix} = \begin{pmatrix} \alpha_X^\ell & f(\alpha_X,\delta_X,\ell) \mathbf{b}_X \\ f(\alpha_X,\delta_X,\ell)\mathbf{c}_X & \delta_X^\ell \end{pmatrix}
\end{align} 
where the function $f$ is given by $
f(\alpha_X,\delta_X,\ell)=\sum\limits_{i=0}^{\ell-1}\alpha_X^{i}\delta_X^{\ell-1-i}$, for all $\alpha_X,\delta_X\in\F$ and $\ell\in\mathbb{Z}_+$.
\item For an element $Y=\begin{pmatrix} \alpha_Y & \mathbf{b}_Y \\ \mathbf{c}_Y & \delta_Y \end{pmatrix}\in\Oc(\F)$, we have  $Y^{\ell}=\begin{pmatrix} \tau_1 & \tau \mathbf{b}_Y\\ \tau \mathbf{c}_Y & \tau_2 \end{pmatrix}$, where $\tau,\tau_1, \tau_2$ are functions in $\alpha_Y,\delta_Y$ and $\langle\mathbf{b}_Y, \mathbf{c}_Y\rangle$. 
\end{enumerate}

Now we have,
\begin{lemma}\label{lem:scalar-k}
Let $A = \begin{pmatrix} \alpha & \mathbf{b}\\  \mathbf{c} & \delta
\end{pmatrix}\in \Oc(\F)$ where $\alpha\neq \delta $ and $\langle \mathbf{b},\mathbf{c}\rangle = 0$. Then, for $\alpha_1\in \F^\times$, there exists $X \in \Oc(\F)$ such that $A=\alpha_1 X^{k_1}$, where $k_1\in\mathbb{Z}_{+}$.
\end{lemma}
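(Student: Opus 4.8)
The plan is to search for a preimage $X$ of the special form $X=\begin{pmatrix}\alpha_X & \mathbf{b}_X\\ \mathbf{c}_X & \delta_X\end{pmatrix}$ with $\langle \mathbf{b}_X, \mathbf{c}_X\rangle = 0$, so that the $\ell$-th power formula recorded above (in the case $\langle \mathbf{b}_X,\mathbf{c}_X\rangle=0$) applies verbatim with $\ell=k_1$, giving $X^{k_1}=\begin{pmatrix}\alpha_X^{k_1} & f(\alpha_X,\delta_X,k_1)\,\mathbf{b}_X\\ f(\alpha_X,\delta_X,k_1)\,\mathbf{c}_X & \delta_X^{k_1}\end{pmatrix}$, where $f(\alpha_X,\delta_X,k_1)=\sum_{i=0}^{k_1-1}\alpha_X^i\delta_X^{k_1-1-i}$. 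Under this ansatz the equation $\alpha_1 X^{k_1}=A$ splits into the scalar conditions $\alpha_1\alpha_X^{k_1}=\alpha$ and $\alpha_1\delta_X^{k_1}=\delta$, together with the vector conditions $\alpha_1 f(\alpha_X,\delta_X,k_1)\,\mathbf{b}_X=\mathbf{b}$ and $\alpha_1 f(\alpha_X,\delta_X,k_1)\,\mathbf{c}_X=\mathbf{c}$.

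First I would use that $\F$ is algebraically closed, so that the $k_1$-th power map on $\F$ is surjective in every characteristic, to choose $\alpha_X$ with $\alpha_X^{k_1}=\alpha/\alpha_1$ and $\delta_X$ with $\delta_X^{k_1}=\delta/\alpha_1$ (legitimate since $\alpha_1\in\F^\times$). The crucial point is that $\alpha\neq\delta$ forces $\alpha_X\neq\delta_X$, for otherwise their $k_1$-th powers would agree; hence $f(\alpha_X,\delta_X,k_1)=\dfrac{\alpha_X^{k_1}-\delta_X^{k_1}}{\alpha_X-\delta_X}=\dfrac{\alpha-\delta}{\alpha_1(\alpha_X-\delta_X)}\neq 0$. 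I then set $\mathbf{b}_X:=\dfrac{\mathbf{b}}{\alpha_1 f(\alpha_X,\delta_X,k_1)}$ and $\mathbf{c}_X:=\dfrac{\mathbf{c}}{\alpha_1 f(\alpha_X,\delta_X,k_1)}$; the hypothesis $\langle\mathbf{b},\mathbf{c}\rangle=0$ immediately yields $\langle\mathbf{b}_X,\mathbf{c}_X\rangle=0$, so the ansatz is self-consistent, and by construction $\alpha_1 X^{k_1}=A$.

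There is no real obstacle here: the only two points needing care are the nonvanishing of $f(\alpha_X,\delta_X,k_1)$ — which is exactly where the hypothesis $\alpha\neq\delta$ enters — and the compatibility condition $\langle\mathbf{b}_X,\mathbf{c}_X\rangle=0$ that licenses the use of the power formula, which is immediate from $\langle \mathbf{b},\mathbf{c}\rangle = 0$. I would also record, since it is used in \cref{sec:invert-new}, that the $X$ constructed above has $N(X)=\alpha_X\delta_X$, so it is invertible precisely when $\alpha\delta=N(A)\neq 0$; thus when $A$ is invertible one can choose the $k_1$-th root of $A/\alpha_1$ to be invertible as well.
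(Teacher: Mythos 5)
Your proof is correct and follows essentially the same route as the paper's: pick $\alpha_X,\delta_X$ as $k_1$-th roots of $\alpha/\alpha_1$ and $\delta/\alpha_1$, observe that $\alpha\neq\delta$ forces $f(\alpha_X,\delta_X,k_1)\neq 0$, and then scale $\mathbf{b}$ and $\mathbf{c}$ by $(\alpha_1 f)^{-1}$ to match the off-diagonal blocks. Your explicit computation $f(\alpha_X,\delta_X,k_1)=(\alpha-\delta)/(\alpha_1(\alpha_X-\delta_X))$ makes the nonvanishing of $f$ slightly more transparent than the paper's terse assertion, and the closing remark that $N(X)=\alpha_X\delta_X$ (so $X$ is invertible exactly when $A$ is) is a useful addition, but the underlying argument is the same.
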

\begin{proof}
Let $X=\begin{pmatrix} \alpha_X & \tau_1\mathbf{b} \\ \tau_2 \mathbf{c} & \delta_X \end{pmatrix}$,  where we aim to determine $\alpha_X, \delta_X, \tau_1,\tau_2\in \F$ to suit our purpose. We compute, 
\begin{align*}
A - \alpha_1 X^{k_1} & = \begin{pmatrix} \alpha & \mathbf{b}\\ \mathbf{c} & \delta \end{pmatrix} - \begin{pmatrix}\alpha_1 \alpha^{k_1}_X & \alpha_1 f(\alpha_X,\delta_X,k_1)\tau_1\mathbf{b}\\
\alpha_1 f(\alpha_X,\delta_X,k_1)\tau_2\mathbf{c} & \alpha_1 \delta^{k_2}_X  \end{pmatrix}\\ & = \begin{pmatrix} \alpha-\alpha_1\alpha^{k_1}_X & \left(1-\alpha_1  f(\alpha_X, \delta_X,k_1) \tau_1\right) \mathbf{b}\\ \left(1-\alpha_1 f(\alpha_X, \delta_X,k_1)\tau_2 \right)\mathbf{c} & \delta-\alpha_1 \delta_X^{k_1}
\end{pmatrix}.
\end{align*}
Choose $\alpha_X,\delta_X$ such that $\alpha=\alpha_1 \alpha^{k_1}_X$ and $\delta=\alpha_1 \delta_X^{k_1}$. Since $\alpha\neq \delta$, we get $\alpha_X\neq \delta_X$ and  $f(\alpha_X,\delta_X,k_1)\neq 0$. 
Further, choose $\tau_1= \tau_2 = (\alpha_1 f(\alpha_X, \delta_X, k_1))^{-1}$. It follows that $A-\alpha_1 X^{k_1}=0$.
\end{proof}

In what follows, an element $A\in \Oc(\F)$ is represented by $A=\begin{pmatrix} \alpha & \bf b\\
\bf c & \delta \end{pmatrix}$. 
Note that no restriction is imposed on the value $\langle \bf b, \bf c \rangle$ in general. For $i=1, 2, 3$, $a_i\in\F$ will denote the $i$-th entry of the vector $\bf a\in\F^3$, i.e., ${\bf a} = (a_1, a_2, a_3)$. By $e_i$, we will denote the vector with $1$ in the $i$-th place and $0$ otherwise. 
It is worth noting that the power map on $\Oc(\F)$ is not surjective, as studied in \cite{lopatin2024polynomial}.
\section{When one of the coefficients is invertible}\label{sec:invert-new}
We work over an algebraically closed field $\F$. In this section, we aim to prove the following. 
\begin{theorem}\label{coro:invertible}
    Let $A_1,A_2\in\Oc(\F)$ with $N(A_1)\neq 0$. Then for $A\in\Oc(\F)$ and positive integers $k_1,k_2\geq2$, there exist $X,Y\in\Oc(\F)$ such that \begin{align} A = A_1X^{k_1}+A_2 Y^{k_2}. \end{align}
\end{theorem}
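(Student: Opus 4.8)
The plan is to use invertibility of $A_1$ to convert the problem into a statement about $k$-th powers, and then combine the fact that a Zariski-dense set of octonions are $k$-th powers with an irreducibility argument. First, since $N(A_1)\ne 0$ the linear map $B\mapsto A_1B$ on $\Oc(\F)$ has left inverse $B\mapsto A_1^{-1}B$ by \eqref{eq:invert}, hence (finite dimensions) it is bijective and $A_1(A_1^{-1}W)=W$ for all $W$. Therefore it suffices to produce one $B_0\in\Oc(\F)$ which is a $k_2$-th power and for which $A_1^{-1}(A-A_2B_0)$ is a $k_1$-th power: taking $Y$ a $k_2$-th root of $B_0$ and $X$ a $k_1$-th root of $A_1^{-1}(A-A_2B_0)$ gives $A_1X^{k_1}+A_2Y^{k_2}=A_1\bigl(A_1^{-1}(A-A_2B_0)\bigr)+A_2B_0=A$.

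\emph{Most octonions are $k$-th powers.} Let $\mathcal D=\{W\in\Oc(\F):T(W)^2=4N(W)\}$. Since, writing $W=\begin{pmatrix}\eta&\mathbf x\\\mathbf y&\zeta\end{pmatrix}$, we have $T(W)^2-4N(W)=(\eta-\zeta)^2+4\langle\mathbf x,\mathbf y\rangle$, which is a nonzero polynomial, $\mathcal D$ is a proper Zariski-closed subset and $\Oc(\F)\smallsetminus\mathcal D$ is dense in $\Oc(\F)\cong\mathbb A^8$. I claim $\Oc(\F)\smallsetminus\mathcal D\subseteq P_k:=\{Z^k:Z\in\Oc(\F)\}$ for every $k\ge 2$. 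Indeed, if $T(W)^2\ne 4N(W)$ then $x^2-T(W)x+N(W)$ has distinct roots $\lambda\ne\mu$ in $\F$; using the quadratic relation $W^2=T(W)W-N(W)I$, the elements $E_\lambda=\frac{W-\mu I}{\lambda-\mu}$, $E_\mu=\frac{W-\lambda I}{\mu-\lambda}$ of the commutative associative subalgebra $\F[W]$ are orthogonal idempotents with $E_\lambda+E_\mu=I$ and $W=\lambda E_\lambda+\mu E_\mu$; choosing $k$-th roots $a,b\in\F$ of $\lambda,\mu$ (automatically distinct) and setting $Z=aE_\lambda+bE_\mu$ gives $Z^k=a^kE_\lambda+b^kE_\mu=W$. (One may also invoke the description of the image of the power map in \cite{lopatin2024polynomial}.)

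\emph{The main step.} Put $A_1\mathcal D:=\{A_1Z:Z\in\mathcal D\}=\{W:A_1^{-1}W\in\mathcal D\}$, again a proper Zariski-closed subset (image of $\mathcal D$ under the linear automorphism $B\mapsto A_1B$), and consider $U:=\{B\in\Oc(\F):A-A_2B\notin A_1\mathcal D\}$, which is Zariski-open, being the preimage of the open set $\Oc(\F)\smallsetminus A_1\mathcal D$ under the morphism $B\mapsto A-A_2B$. Assume for the moment that $U\ne\varnothing$. Then $U$ and $\Oc(\F)\smallsetminus\mathcal D$ are nonempty open subsets of the irreducible variety $\Oc(\F)$, so they meet; choose $B_0\in U\cap(\Oc(\F)\smallsetminus\mathcal D)$. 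By the previous paragraph $B_0\notin\mathcal D$ is a $k_2$-th power and $A_1^{-1}(A-A_2B_0)\notin\mathcal D$ is a $k_1$-th power, so the reduction above completes the proof.

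\emph{The obstacle: $U\ne\varnothing$.} This is the heart of the argument, and is where the hypothesis $A_2\ne 0$ enters. If $U=\varnothing$, the affine subspace $\{A-A_2B:B\in\Oc(\F)\}$ lies in $A_1\mathcal D=\{q=0\}$ where $q(W)=T(A_1^{-1}W)^2-4N(A_1^{-1}W)$ is a quadratic form; writing $q(A-A_2B)$ as a polynomial in $B$ and comparing homogeneous parts forces the quadratic form $B\mapsto q(A_2B)$ to vanish identically. Using $N\bigl(A_1^{-1}(A_2B)\bigr)=N(A_1)^{-1}N(A_2)N(B)$ together with the standard identities $T(uv)=b_N(\overline u,v)$ and $b_N(x,yz)=b_N(\overline y x,z)$ for the polar form $b_N$ of $N$, one gets $T\bigl(A_1^{-1}(A_2B)\bigr)=N(A_1)^{-1}b_N(\overline{A_2}A_1,B)$, so $q(A_2B)=T\bigl(A_1^{-1}(A_2B)\bigr)^2-4N(A_1)^{-1}N(A_2)N(B)$. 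This cannot vanish identically when $N(A_2)\ne 0$ (the two summands are quadratic forms of ranks $\le 1$ and $8$), and when $N(A_2)=0$ it forces $b_N(\overline{A_2}A_1,\cdot)\equiv 0$, i.e.\ $\overline{A_2}A_1=0$ since $b_N$ is nondegenerate; then $N(A_1)\overline{A_2}=\overline{A_2}\bigl(A_1\overline{A_1}\bigr)=\bigl(\overline{A_2}A_1\bigr)\overline{A_1}=0$ by Artin's theorem (the subalgebra generated by $\overline{A_2}$ and $A_1$ is associative and contains $\overline{A_1}$), so $\overline{A_2}=0$ and $A_2=0$, a contradiction. Hence $U\ne\varnothing$ whenever $A_2\ne 0$. (If $A_2=0$ were allowed the theorem would fail — $A_1X^{k_1}$ never equals $A_1$ times a nonzero nilpotent — so the standing assumption $A_2\ne 0$ from the introduction is essential here.)
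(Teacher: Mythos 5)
Your proof is correct and takes a genuinely different route from the paper's. The paper argues by explicit construction: it reduces to the list of $G_2$-orbit representatives of the pair $(A_1,A_2)$ given in Proposition~\ref{pro:reprsen-simul}, and then for each admissible orbit type it hand-picks $X$ and $Y$ (via Lemmas~\ref{(1)-diag}--\ref{conjugate-diag} and the succeeding propositions) so that the two summands fit together. Your argument is geometric: you show that the locus of non-$k$-th-powers is contained in the proper closed quadric $\mathcal D=\{T(W)^2=4N(W)\}$, then reduce the problem to finding a point $B_0$ of the irreducible affine space $\Oc(\F)\cong\mathbb A^8$ lying in the intersection of two nonempty Zariski-open sets, and finally verify nonemptiness of the critical open set $U$ by a quadratic-form computation that uses only $N(A_1)\neq0$, $A_2\neq0$, multiplicativity of $N$, nondegeneracy of $b_N$, and Artin's theorem. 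Your approach trades the case analysis (and the reliance on the classification of $G_2$-orbits on $\Oc\times\Oc$ from \cite{LopatinZubkov2025}) for irreducibility of affine space and a few standard composition-algebra identities; it is shorter, non-constructive, and makes it plainly visible why $A_2\neq0$ is needed. The paper's approach, in contrast, yields explicit witnesses $X,Y$ and supplies the computational infrastructure reused in \cref{sec:thm-2} for the non-invertible cases, so neither approach dominates.

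Two small remarks. First, you correctly note that the theorem as stated would be false if $A_2=0$: then $A_1X^{k_1}$ ranges only over $A_1\cdot P_{k_1}$, a proper subset, since the $k$-th power map on $\Oc(\F)$ is not surjective. The paper's own proof likewise assumes $A_2\neq0$ implicitly (Lemma~\ref{(1)-diag} excludes $\beta_1=\beta_8=0$), so this is a real, though minor, slip in the statement that you are right to flag. Second, the justification ``the two summands are quadratic forms of ranks $\le 1$ and $8$'' in the case $N(A_2)\neq0$ needs a word when $\operatorname{char}\F=2$, since then $4N(A_1)^{-1}N(A_2)N(B)$ vanishes identically and the rank count changes; but there $q(A_2B)=N(A_1)^{-2}b_N(\overline{A_2}A_1,B)^2$ vanishes identically iff $\overline{A_2}A_1=0$, which by \eqref{eq:invert} forces $A_1=0$ when $\overline{A_2}$ is invertible, the same contradiction. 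So the conclusion $U\neq\varnothing$ stands in all characteristics; only the parenthetical justification needs this one-line patch.
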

We assume $N(A_1)\neq 0$ and $N(A_2)\in \F$. The case where $N(A_2)\neq 0$ and $N(A_1)\in \F$ can be derived by considering the pair $(A_2,A_1)$ under the action of $G_2$.\\
To prove the existence of $X$ and $Y$ for each $A\in \Oc(\F)$, we consider the following representatives of $(A_1,A_2)$ under the action of $G_2$ (note that we have combined a few classes which will be dealt together):
\begin{multicols}{2}
\begin{enumerate}
\item[(I)]  $\left(\begin{pmatrix}   \alpha_1 & \mathbf{0}\\  \mathbf{0} & \alpha_8 \end{pmatrix},
\begin{pmatrix} \beta_1 & \mathbf{0}\\ \mathbf{0} & \beta_8 \end{pmatrix}\right)$,
\item[(II)] $\left(\begin{pmatrix} \alpha_1 & \bf0\\ \bf0 & \alpha_8 \end{pmatrix},
\begin{pmatrix} \beta_1 & (1,0,0)\\ \mathbf{0} & \beta_8
\end{pmatrix}\right)$, 
\item [(III)]  $\left(\begin{pmatrix} \alpha_1 & \bf0\\ \bf0 & \alpha_8 \end{pmatrix},
\begin{pmatrix} \beta_1 & (1,0,0)\\ (\beta_5,\beta_6,0) & \beta_8 \end{pmatrix}\right)$, 
\item [(IV)] $\left(\begin{pmatrix} \alpha_1 & (1,0,0)\\ \bf 0 & \alpha_1 \end{pmatrix},
\begin{pmatrix} \beta_1 & \bf 0\\ \bf 0 & \beta_8 \end{pmatrix}\right)$,
\item [(V)] $\left(\begin{pmatrix} \alpha_1 & (1,0,0)\\ \bf 0 & \alpha_1 \end{pmatrix},
\begin{pmatrix} \beta_1 & (\beta_2,\beta_3,0)\\ \bf 0 & \beta_8
\end{pmatrix}\right)$,
\item [(VI)]$\left(\begin{pmatrix} \alpha_1 & (1,0,0)\\ \bf 0 & \alpha_1 \end{pmatrix},
\begin{pmatrix} \beta_1 & \bf 0\\ (\beta_5,\beta_6,0) & \beta_8
\end{pmatrix}\right)$.
\end{enumerate}
\end{multicols}
We note that in all the cases mentioned above, the tuples have the following conditions:
\begin{enumerate}
\item either $\beta_2\neq 0$ and $\beta_3=0$ with $\beta_1=\beta_8$, or $\beta_2= 0$ and $\beta_3=1$
\item  either $\beta_5\neq 0$ and $\beta_6=0$, or $\beta_5= 0$ and $\beta_6=1$.
\end{enumerate}
Since $N(A_1)\neq 0$, throughout this section we assume $\alpha_1,\alpha_8\in \F^\times$. We begin with some lemmas that will be used to prove the main statement of this section.  

\begin{lemma}\label{(1)-diag}
For $\beta_1,\beta_8\in\F$ not simultaneously zero and $A\in\Oc(\F)$, there exist $ Y\in\Oc(\F)$ such that $A- \begin{pmatrix} \beta_1 & \bf{0} \\ \mathbf{0} & \beta_8 \end{pmatrix}Y^{k_2}$ is either $\begin{pmatrix} \alpha_8^{-1} \alpha_1 \alpha' & \bf 0\\  \bf c & \delta \end{pmatrix}$ with $\alpha'\neq \delta$, or $\begin{pmatrix} \alpha & \bf 0\\ \bf c & \alpha_1^{-1}\alpha_8\delta' \end{pmatrix}$ with $\alpha\neq \delta'$.
\end{lemma}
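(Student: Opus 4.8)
The role of this lemma is the ``$Y$-half'' of the proof of \cref{coro:invertible} in the diagonal case: one wants to choose $Y$ so that, after subtracting $\begin{pmatrix}\beta_1 & \mathbf 0\\ \mathbf 0 & \beta_8\end{pmatrix}Y^{k_2}$, the upper off-diagonal block of $A$ is wiped out and the two diagonal entries are moved into a configuration from which the remaining octonion can be written as $\begin{pmatrix}\alpha_1 & \mathbf 0\\ \mathbf 0 & \alpha_8\end{pmatrix}X^{k_1}$. This is why the two diagonal slots are allowed to carry the factors $\alpha_8^{-1}\alpha_1$ and $\alpha_1^{-1}\alpha_8$, and why one only needs the separation $\alpha'\neq\delta$ rather than equality of the literal entries. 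I would prove the lemma by an explicit choice of $Y$.

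Write $A=\begin{pmatrix}\alpha & \mathbf b\\ \mathbf c & \delta\end{pmatrix}$; recall $\alpha_1,\alpha_8\in\F^\times$. It suffices to treat $\beta_1\neq 0$ (if $\beta_1=0$ then $\beta_8\neq 0$, and the symmetric argument, exchanging the two diagonal corners via the automorphism $\begin{pmatrix}\eta & \mathbf x\\ \mathbf y & \zeta\end{pmatrix}\mapsto\begin{pmatrix}\zeta & \mathbf y\\ \mathbf x & \eta\end{pmatrix}$ of $\Oc(\F)$, produces the second form). So assume $\beta_1\in\F^\times$ and take $Y$ of the special shape $Y=\begin{pmatrix}\alpha_Y & \mathbf b_Y\\ \mathbf 0 & 0\end{pmatrix}$ with $\alpha_Y\in\F^\times$ and $\mathbf b_Y\in\F^3$ to be fixed. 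Since $\langle\mathbf b_Y,\mathbf 0\rangle=0$, the displayed power formula for the case of vanishing off-diagonal pairing applies, and using $f(\alpha_Y,0,k_2)=\alpha_Y^{k_2-1}$ a direct multiplication gives
\[
A-\begin{pmatrix}\beta_1 & \mathbf 0\\ \mathbf 0 & \beta_8\end{pmatrix}Y^{k_2}=\begin{pmatrix}\alpha-\beta_1\alpha_Y^{k_2} & \mathbf b-\beta_1\alpha_Y^{k_2-1}\mathbf b_Y\\ \mathbf c & \delta\end{pmatrix}.
\]
Choosing $\mathbf b_Y:=(\beta_1\alpha_Y^{k_2-1})^{-1}\mathbf b$ makes the upper block vanish, and what remains is to select $\alpha_Y\in\F^\times$ with $\alpha_8(\alpha-\beta_1\alpha_Y^{k_2})\neq\alpha_1\delta$, i.e.\ with $\alpha_Y^{k_2}\neq(\alpha_8\beta_1)^{-1}(\alpha_8\alpha-\alpha_1\delta)$; this rules out at most $k_2$ choices of $\alpha_Y$, together with $\alpha_Y=0$, so over the infinite field $\F$ a legitimate $\alpha_Y$ exists. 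Setting $\alpha':=\alpha_1^{-1}\alpha_8(\alpha-\beta_1\alpha_Y^{k_2})$ then exhibits the difference as $\begin{pmatrix}\alpha_8^{-1}\alpha_1\alpha' & \mathbf 0\\ \mathbf c & \delta\end{pmatrix}$ with $\alpha'\neq\delta$, the first alternative.

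The only genuinely delicate step is the last one, and it is delicate only because two requirements must hold at once: $f(\alpha_Y,\delta_Y,k_2)\neq 0$ (needed both to solve for $\mathbf b_Y$ and, downstream, for \cref{lem:scalar-k}) and the separation of the diagonal entries. Specializing $\delta_Y=0$ collapses both to ``avoid finitely many values of the single scalar $\alpha_Y$'', which is free over an infinite field; keeping $\delta_Y$ general would instead force one to check that a certain polynomial condition in $(\alpha_Y,\delta_Y)$ is not identically satisfied --- the same point, more heavily disguised. The $\beta_1=0$, $\beta_8\neq 0$ case is entirely parallel, now with $Y$ taken diagonal and the $(2,2)$-entry moved through $\beta_8$, and lands in the form $\begin{pmatrix}\alpha & \mathbf 0\\ \mathbf c & \alpha_1^{-1}\alpha_8\delta'\end{pmatrix}$ with $\alpha\neq\delta'$.
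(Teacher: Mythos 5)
Your proposal is correct and follows essentially the same route as the paper. For $\beta_1\neq 0$ the paper likewise takes $Y=\begin{pmatrix}\alpha_Y & \tau\mathbf b\\ \mathbf 0 & 0\end{pmatrix}$ with $\delta_Y=0$, fixes $\alpha_Y\neq 0$ so that $\alpha-\beta_1\alpha_Y^{k_2}=\alpha_8^{-1}\alpha_1\alpha'$ with $\alpha'\neq\delta$, and then solves for the scalar multiplying $\mathbf b$; your counting of the excluded values of $\alpha_Y$ and your note that specialising $\delta_Y=0$ collapses the two constraints into one are just an explicit version of what the paper leaves implicit.

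For $\beta_1=0$, $\beta_8\neq 0$ the paper redoes the computation directly with $Y=\begin{pmatrix}0 & \mathbf 0\\ \tau\mathbf c & \delta_Y\end{pmatrix}$, whereas you invoke the swap $\sigma\colon\begin{pmatrix}\eta & \mathbf x\\ \mathbf y & \zeta\end{pmatrix}\mapsto\begin{pmatrix}\zeta & \mathbf y\\ \mathbf x & \eta\end{pmatrix}$, which is indeed a unital $\F$-algebra automorphism of $\Oc(\F)$ and hence in $G_2(\F)$; this halves the work and is a nice touch. Two small inaccuracies in your write-up of that case: (i) the element $\sigma(Y')$ you transport is lower-triangular with vanishing $(1,1)$-entry, not diagonal as you state (a diagonal $Y$ would only move the $(2,2)$-entry and cannot clear a vector block); (ii) the difference you actually land on is $\begin{pmatrix}\alpha & \mathbf b\\ \mathbf 0 & \alpha_1^{-1}\alpha_8\delta'\end{pmatrix}$ — the operator $\begin{pmatrix}0 & \mathbf 0\\ \mathbf 0 & \beta_8\end{pmatrix}Y^{k_2}$ never touches the upper block, so it is $\mathbf c$, not $\mathbf b$, that gets annihilated. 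The lemma's printed second alternative has the $\mathbf 0$ and $\mathbf c$ in the wrong slots (the paper's own Case 2 produces the form just described), and you have inherited that typo; it is harmless because the downstream argument in \cref{coro:invertible} only uses $\langle\mathbf b',\mathbf c'\rangle=0$, which holds either way.
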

\begin{proof}
We consider two cases separately, where $\beta_1\neq 0$ and $\beta_1 = 0$.
    
{\bf Case 1.} For $\beta_1\neq 0$, take $Y=\begin{pmatrix} \alpha_Y & \tau \mathbf{b}\\ 0 & 0 \end{pmatrix}$. Then, 
$$A-\begin{pmatrix} \beta_1 & \mathbf{0}\\ \mathbf{0} & \beta_8
\end{pmatrix}Y^{k_2} = \begin{pmatrix} \alpha - \beta_1 \alpha_Y^{k_2}& (1-\beta_1 f(\alpha_Y,\beta_Y,k_2) \tau)\mathbf{b}\\ \mathbf{c} & \delta \end{pmatrix}.
$$ 
Choose $\alpha_Y\neq 0$ such that $\alpha - \beta_1 \alpha_Y^{k_2}=\alpha_8^{-1}\alpha_1\alpha'$ where $\alpha'\neq \delta$. Then $f(\alpha_Y,\delta_Y,k_2)\neq 0$. Choose $\tau=(\beta_1f(\alpha_Y,\beta_Y, k_2))^{-1}$. We get, $A- \begin{pmatrix} \beta_1 & \bf{0} \\ \mathbf{0} & \beta_8 \end{pmatrix}Y^{k_2} = \begin{pmatrix}  \alpha_8^{-1} \alpha_1 \alpha' & \bf 0\\ \bf c & \delta \end{pmatrix}$ with $\alpha'\neq \delta$.

{\bf Case 2.} Now, we take $\beta_1=0$ and consider $Y=\begin{pmatrix}      0 & \mathbf{0}\\ \tau \mathbf{c} & \delta_Y \end{pmatrix}$. Then, 
$$A - \begin{pmatrix} 0 & \mathbf{0} \\ \mathbf{0} & \beta_8 \end{pmatrix}Y^{k_2} = \begin{pmatrix} \alpha & \mathbf{b} \\
(1 - \beta_8 f(\alpha_Y,\beta_Y, k_2) \tau_2 )\mathbf{c} & \delta-\beta_8 \delta_Y^{k_2}\end{pmatrix}.
$$ 
Choose $\delta_Y\neq 0$ such that $\delta - \beta_8 \delta_Y^{k_2}=\alpha_1^{-1}\alpha_8\delta'$ where $\delta'\neq \alpha$ . Then $f(\alpha_Y,\delta_Y,k_2)\neq 0$ and $\tau = (\beta_8 f(\alpha_Y,\beta_Y, k_2))^{-1}$. We get, $A- \begin{pmatrix} \beta_1 & \bf{0} \\ \mathbf{0} & \beta_8 \end{pmatrix}Y^{k_2}=\begin{pmatrix}
    \alpha & \bf 0\\
    \bf c & \alpha_1^{-1}\alpha_8\delta'
\end{pmatrix}$ with $\alpha\neq \delta'$.
\end{proof}

\begin{lemma}\label{(2)-diag}
For $\beta_1, \beta_8\in\F$ and $A\in\Oc(\F)$, there exist $ Y\in\Oc(\F)$ such that
$A-\begin{pmatrix} \beta_1 & (1,0,0) \\ \mathbf{0} & \beta_8 \end{pmatrix}Y^{k_2}$ is either $\begin{pmatrix} \alpha_8^{-1} \alpha_1 \alpha' & \bf 0\\ \mathbf{c}' & \delta' \end{pmatrix}$ with $\alpha'\neq \delta'$, or $\begin{pmatrix}  \alpha_8^{-1} \alpha_1 \alpha' & (0,b_2,b_3)\\ (c_1',0,0) & \delta'
\end{pmatrix}$ with $\alpha'\neq \delta'$.
\end{lemma}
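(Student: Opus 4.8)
The plan is to follow the template of the proof of \cref{(1)-diag}: pick $Y$ of a convenient near-triangular shape, use the explicit formula for $Y^{k_2}$ (valid because the off-diagonal vectors of $Y$ will be orthogonal) to write $\begin{pmatrix} \beta_1 & (1,0,0) \\ \mathbf{0} & \beta_8 \end{pmatrix}Y^{k_2}$ down, and then tune the free parameters so that the entries of $A-\begin{pmatrix} \beta_1 & (1,0,0) \\ \mathbf{0} & \beta_8 \end{pmatrix}Y^{k_2}$ forbidden by one of the two target shapes vanish. Write $A=\begin{pmatrix} \alpha & \mathbf{b} \\ \mathbf{c} & \delta \end{pmatrix}$ and recall that $\alpha_1,\alpha_8\in\F^\times$, so the top-left entry of a target form is unconstrained apart from the inequality $\alpha'\neq\delta'$. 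I would split according to whether $\beta_1$ vanishes.

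For $\beta_1\neq 0$, take $Y=\begin{pmatrix} \alpha_Y & \mathbf{b}_Y \\ \mathbf{0} & 0 \end{pmatrix}$ with $\alpha_Y\in\F^\times$, so that $Y^{k_2}=\begin{pmatrix} \alpha_Y^{k_2} & \alpha_Y^{k_2-1}\mathbf{b}_Y \\ \mathbf{0} & 0 \end{pmatrix}$ and a direct multiplication gives $\begin{pmatrix} \beta_1 & (1,0,0) \\ \mathbf{0} & \beta_8 \end{pmatrix}Y^{k_2}=\begin{pmatrix} \beta_1\alpha_Y^{k_2} & \beta_1\alpha_Y^{k_2-1}\mathbf{b}_Y \\ \alpha_Y^{k_2-1}(e_1\wedge\mathbf{b}_Y) & 0 \end{pmatrix}$. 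Setting $\mathbf{b}_Y=(\beta_1\alpha_Y^{k_2-1})^{-1}\mathbf{b}$ clears the whole upper-right block, leaving $\begin{pmatrix} \alpha-\beta_1\alpha_Y^{k_2} & \mathbf{0} \\ \mathbf{c}-\beta_1^{-1}(e_1\wedge\mathbf{b}) & \delta \end{pmatrix}$, which is the first target form with $\alpha'=\alpha_1^{-1}\alpha_8(\alpha-\beta_1\alpha_Y^{k_2})$ and $\delta'=\delta$; the condition $\alpha'\neq\delta'$ amounts to $\alpha_Y^{k_2}\neq\beta_1^{-1}(\alpha-\alpha_1\alpha_8^{-1}\delta)$, which rules out at most $k_2$ values of $\alpha_Y$, so a suitable $\alpha_Y\in\F^\times$ exists.

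For $\beta_1=0$, take $Y=\begin{pmatrix} \alpha_Y & (0,\,c_3 f^{-1},\,-c_2 f^{-1}) \\ (t,0,0) & \delta_Y \end{pmatrix}$, where $\delta_Y$ satisfies $\delta_Y^{k_2}=b_1$ (available since $\F$ is algebraically closed), $\alpha_Y$ is chosen with $f:=f(\alpha_Y,\delta_Y,k_2)\neq 0$ (as a polynomial in $\alpha_Y$ this is monic of degree $k_2-1$, hence has only finitely many roots), and $t\in\F$ is free. The two off-diagonal vectors of $Y$ are orthogonal, so the power formula applies, and computing $\begin{pmatrix} 0 & (1,0,0) \\ \mathbf{0} & \beta_8 \end{pmatrix}Y^{k_2}$ shows that its upper-right block is $\delta_Y^{k_2}e_1=b_1 e_1$ and its lower-left block is $(\beta_8 f t,\,c_2,\,c_3)$. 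Hence $A-\begin{pmatrix} 0 & (1,0,0) \\ \mathbf{0} & \beta_8 \end{pmatrix}Y^{k_2}=\begin{pmatrix} \alpha-ft & (0,b_2,b_3) \\ (c_1-\beta_8 ft,0,0) & \delta-\beta_8 b_1 \end{pmatrix}$, the second target form with $\alpha'=\alpha_1^{-1}\alpha_8(\alpha-ft)$ and $\delta'=\delta-\beta_8 b_1$; now $\alpha'\neq\delta'$ is a single linear condition on $t$, avoided by an appropriate choice.

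I expect the routine entry-wise multiplications to be the bulk of the write-up, but the only genuinely delicate point is the nilpotent sub-case $\beta_1=\beta_8=0$: there $\begin{pmatrix} 0 & (1,0,0) \\ \mathbf{0} & 0 \end{pmatrix}$ kills the diagonal part of $Y^{k_2}$, so the required lower-left entries must come solely from the exterior-product term $e_1\wedge\mathbf{x}'$ in the octonion product, and this is precisely what forces the choice of the upper-right vector of $Y$ made above; it is convenient that the same ansatz also handles $\beta_8\neq 0$. Apart from that, everything comes down to solving finitely many polynomial (mostly linear) conditions over the infinite field $\F$, so there is no real obstruction.
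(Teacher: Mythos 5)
Your proof is correct and follows essentially the same route as the paper's: in both cases you choose $Y$ with $\delta_Y=0$ when $\beta_1\neq 0$ (killing the top-right block and leaving a diagonal matrix on the right that produces the first target form), and when $\beta_1=0$ you pick $\delta_Y$ with $\delta_Y^{k_2}=b_1$, put the upper-right block of $Y$ in the plane spanned by $e_2,e_3$, put the lower-left block along $e_1$, and use the free scalar $t$ to enforce $\alpha'\neq\delta'$. The only cosmetic difference is in normalization: the paper arranges $f(\alpha_Y,\delta_Y,k_2)=1$ by choosing $\alpha_Y$ suitably, whereas you merely require $f\neq 0$ and absorb $f^{-1}$ into $\mathbf{b}_Y$; your version also keeps closer track of the signs in $e_1\wedge\mathbf{b}_Y$ (your choice $\mathbf{b}_Y = f^{-1}(0,c_3,-c_2)$ is the one that actually cancels the $e_2,e_3$-components of the lower-left block), and you correctly note that the claimed inequality $\alpha'\neq\delta'$ excludes only finitely many choices of the free parameter. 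No gaps.
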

\begin{proof}
We consider two cases, depending on $\beta_1\neq 0$ and $\beta_1=0$.

{\bf Case 1.} For $\beta_1\neq 0$, take $Y=\begin{pmatrix} \alpha_Y &  \mathbf{b}_Y\\ \bf 0 & 0
    \end{pmatrix}$. Then, 
$$A-\begin{pmatrix} \beta_1 & (1,0,0)\\ \mathbf{0} & \beta_8
\end{pmatrix}Y^{k_2} = \begin{pmatrix} \alpha - \beta_1 \alpha_Y^{k_2}& \mathbf{b} -\beta_1f(\alpha_Y,\delta_Y,k_2)\mathbf {b}_Y\\ \mathbf{c}-(1,0,0)\wedge f(\alpha_Y,\delta_Y,k_2)\mathbf{b}_Y & \delta\end{pmatrix}.
$$ 
Choose $\alpha_Y\neq 0$ such that $\alpha - \beta_1 \alpha_Y^{k_2}=\alpha_8^{-1}\alpha_1\alpha'$ where $\alpha'\neq \delta$. Then $f(\alpha_Y,\delta_Y,k_2)\neq 0$ and take $\mathbf{b}_Y$ such that $\mathbf{b} -\beta_1f(\alpha_Y,\delta_Y,k_2)\mathbf {b}_Y=0$ . We get, $A-\begin{pmatrix} \beta_1 & (1,0,0) \\ \mathbf{0} & \beta_8 \end{pmatrix}Y^{k_2}=\begin{pmatrix}
    \alpha_8^{-1}\alpha_1\alpha' & \bf 0\\
    \mathbf{c}' & \delta
\end{pmatrix}$ with $\alpha'\neq \delta$.\\

{\bf Case 2.} For the case $\beta_1=0$,  consider $Y=\begin{pmatrix}      \alpha_Y & \mathbf{b}_Y\\ \mathbf{c}_Y & \delta_Y \end{pmatrix}$ where $\mathbf{b}_Y=(0,c_3,c_2)$ and $\mathbf{c}_Y=(\tau,0,0)$. Then, 
$$A - \begin{pmatrix} 0 & (1,0,0) \\ \mathbf{0} & \beta_8 \end{pmatrix}Y^{k_2} = \begin{pmatrix} \alpha-f(\alpha_Y,\delta_Y,k_2)\langle\mathbf{c}_Y,(1,0,0)\rangle & \mathbf{b}-\delta_Y^{k_2}(1,0,0) \\
\mathbf{c} - \beta_8 f(\alpha_Y,\beta_Y, k_2)\mathbf{c}_Y-f(\alpha_Y,\beta_Y, k_2)(1,0,0)\wedge\mathbf{b}_Y & \delta-\beta_8 \delta_Y^{k_2}\end{pmatrix}.
$$ 
Choose $\delta_Y$ such that $b_1-\delta_Y^{k_2}=0$. Choose $\alpha_Y$ such that $f(\alpha_Y,\delta_Y,k_2)=1$ and hence $\tau$ such that $\alpha-\tau=\alpha_8^{-1}\alpha_1\alpha'$ where $\alpha'\neq \delta-\beta_8b_1$. We get, $A-\begin{pmatrix} \beta_1 & (1,0,0) \\ \mathbf{0} & \beta_8 \end{pmatrix}Y^{k_2}=\begin{pmatrix}
    \alpha_8^{-1}\alpha_1\alpha' & (0,b_2,b_3)\\
    (c_1-\beta_8\tau,0,0) & \delta-\beta_8\delta_Y^{k_2}
\end{pmatrix}$, as desired.
\end{proof}

\begin{lemma}\label{(3)-diag}
For $\beta_1,\beta_8\in \F$ and $A\in\Oc(\F)$, there exist $X, Y\in\Oc(\F)$ such that
\begin{align*}A- \begin{pmatrix} \beta_1 & (1,0,0) \\ (\beta_5,\beta_6,0) & \beta_8 \end{pmatrix}Y^{k_2}=\begin{pmatrix}
    \alpha_8^{-1}\alpha_1\alpha' & (0,b_2',b_3')\\
    (c_1',0,0) & \delta'
\end{pmatrix} \end{align*} where $\alpha'\neq \delta'$.
\end{lemma}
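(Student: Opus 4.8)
\textbf{Proof strategy for Lemma \ref{(3)-diag}.}
The plan is to mimic the two-case split of Lemma \ref{(2)-diag}, since case (III) only differs from case (II) by the presence of the nonzero entry $\beta_5$ or $\beta_6$ in the $\mathbf{c}$-position of $A_2$. Recall from the standing conventions in this section that in case (III) we have either $\beta_5\neq 0,\ \beta_6=0$, or $\beta_5=0,\ \beta_6=1$; I would treat these uniformly by writing $(\beta_5,\beta_6,0)$ and only using that this vector is nonzero and not parallel to $(1,0,0)$ in the second case. As in Lemma \ref{(2)-diag}, I expect to choose $Y$ of the form $\begin{pmatrix} \alpha_Y & \mathbf{b}_Y\\ \mathbf{c}_Y & \delta_Y\end{pmatrix}$ with $\mathbf{b}_Y,\mathbf{c}_Y$ supported on a small number of coordinates, compute $Y^{k_2}$ via the second displayed power formula (entries scaled by $\tau,\tau_1,\tau_2$ which are functions of $\alpha_Y,\delta_Y,\langle\mathbf{b}_Y,\mathbf{c}_Y\rangle$), and then use the block multiplication rule to expand $\begin{pmatrix}\beta_1 & (1,0,0)\\ (\beta_5,\beta_6,0) & \beta_8\end{pmatrix}Y^{k_2}$.

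The key steps, in order, are: (1) write out the four blocks of $\begin{pmatrix}\beta_1 & (1,0,0)\\ (\beta_5,\beta_6,0) & \beta_8\end{pmatrix}Y^{k_2}$ explicitly, paying attention to the cross-product terms $(1,0,0)\wedge(\tau\mathbf{c}_Y)$ in the lower-left block and $(\beta_5,\beta_6,0)\wedge(\tau\mathbf{b}_Y)$ (these are the terms that let us kill or create coordinates of $\mathbf{b}$ and $\mathbf{c}$); (2) choose the scalar parameters $\alpha_Y,\delta_Y$ first so that the diagonal entry $\alpha - (\text{top-left block})$ equals $\alpha_8^{-1}\alpha_1\alpha'$ for a value $\alpha'$ we will pick at the very end to differ from whatever $\delta'$ turns out to be, and simultaneously so that the relevant $f$-type multiplier is nonzero; (3) with those scalars fixed, solve the linear conditions on $\mathbf{b}_Y,\mathbf{c}_Y$ forcing the off-diagonal vectors of the difference into the prescribed shape $(0,b_2',b_3')$ and $(c_1',0,0)$ — here the wedge terms give us extra freedom to clear the first coordinate of $\mathbf{b}$ and the second/third coordinates of $\mathbf{c}$; (4) finally, after all of $\delta'$ is determined, go back and pick $\alpha'\neq\delta'$, which is possible since $\alpha_1,\alpha_8\in\F^\times$ and the map $\alpha'\mapsto$ (resulting top-left entry before scaling) is surjective once $f\neq 0$.

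I would organize the write-up as two cases, $\beta_1\neq 0$ and $\beta_1=0$, exactly paralleling Lemma \ref{(2)-diag}. In the case $\beta_1\neq 0$ I expect to take $\mathbf{c}_Y=\mathbf 0$ (or nearly so), so that $Y^{k_2}$ has the simple form from the first power formula, $\alpha_Y$ handles the top-left entry, $\mathbf{b}_Y$ handles $\mathbf{b}$, and the lower-left block of the product is $(\beta_5,\beta_6,0)\cdot(\text{something}) + (1,0,0)\wedge(\cdots)$; I then choose $\mathbf{b}_Y$ so that $\mathbf{b}$ is cleared and check that what remains in the $\mathbf{c}$-slot, after absorbing $(\beta_5 f(\cdots),\beta_6 f(\cdots),0)$-type contributions, can be arranged to lie along $(1,0,0)$ — using that in the $\beta_6=1$ subcase we can still absorb a $(0,1,0)$-direction contribution into $\mathbf{c}_Y$. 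In the case $\beta_1=0$ I would take $Y$ with both $\mathbf{b}_Y$ and $\mathbf{c}_Y$ nonzero as in Lemma \ref{(2)-diag} Case 2, using $\langle\mathbf{c}_Y,(1,0,0)\rangle$ and $\delta_Y^{k_2}$ to control the top-left and the first coordinate of $\mathbf{b}$, and the wedge terms to clean up the rest.

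The main obstacle I anticipate is the lower-left block: unlike Lemma \ref{(2)-diag}, we now have a genuine extra vector $(\beta_5,\beta_6,0)$ multiplying the diagonal part of $Y^{k_2}$, so after forcing $\mathbf{b}$ into the form $(0,b_2',b_3')$ we must simultaneously check that the $e_2,e_3$-components of the resulting $\mathbf{c}$ can be annihilated — and in the subcase $\beta_6=1$ the term $(\beta_5,1,0)\cdot(\tau_2$ something$)$ contributes an unwanted $e_2$-component that must be cancelled by a wedge term $(1,0,0)\wedge\mathbf{b}_Y$, which only produces vectors in the $e_2,e_3$-plane — so one has to verify the bookkeeping closes, i.e.\ that the number of free parameters in $\mathbf{b}_Y,\mathbf{c}_Y$ matches the number of scalar constraints. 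I expect this to work out because the wedge with $(1,0,0)$ spans exactly the $e_2,e_3$-plane and $\langle\mathbf{b}_Y,\mathbf{c}_Y\rangle$ can be tuned independently, but confirming the non-degeneracy of the resulting linear system (so that a solution exists for every target $A$) is the delicate point.
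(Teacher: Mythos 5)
Your strategy is broadly sound and in spirit the same as the paper's: compute the blocks of $A_2 Y^{k_2}$, use the wedge terms $(1,0,0)\wedge\mathbf{b}_Y$ (which lie in the $e_2,e_3$-plane) to clear the unwanted components of $\mathbf{c}$, and reserve one scalar degree of freedom to force $\alpha' \neq \delta'$ at the end. But your plan contains a circularity that you flag and leave unresolved, and this is exactly where the paper's choices matter.

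The paper splits on $(\beta_5,\beta_6)$, i.e.\ on the subclasses $\mathrm{(FN)}$ and $\mathrm{(FP)}$ of \cref{pro:reprsen-simul}, not on $\beta_1$, and in both subcases takes $Y$ with $\mathbf{c}_Y = (\tau,0,0)$ and $\mathbf{b}_Y$ supported on $e_2,e_3$, so that $\langle\mathbf{b}_Y,\mathbf{c}_Y\rangle = 0$ and the simple power formula with $f$ applies. It fixes $\delta_Y$ by $\delta_Y^{k_2} = b_1$, fixes $\alpha_Y$ so that $f(\alpha_Y,\delta_Y,k_2)=1$, and then solves for the $e_2,e_3$ components of $\mathbf{b}_Y$. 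At that point $\delta'$ is fully determined and, crucially, does not involve $\tau$, because $\tau$ sits in the $e_1$ slot of $\mathbf{c}_Y$ and enters only the top-left entry and the $e_1$ component of the bottom-left. So the last step, choosing $\tau$ with $\alpha - \beta_1\alpha_Y^{k_2} - \tau = \alpha_8^{-1}\alpha_1\alpha'$ and $\alpha'\neq\delta'$, is genuinely free; there is nothing to "go back" to. In your plan for the $\beta_1\neq 0$ case the free parameter is $\alpha_Y$ (with $\mathbf{c}_Y=\mathbf{0}$), but $\mathbf{b}_Y$ (determined by clearing $\mathbf{b}$) depends on $\alpha_Y$, hence so, a priori, does $\delta' = \delta - \langle(\beta_5,\beta_6,0),\alpha_Y^{k_2-1}\mathbf{b}_Y\rangle$. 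This dependence does in fact cancel — with $\delta_Y = 0$ one gets $\alpha_Y^{k_2-1}\mathbf{b}_Y = \beta_1^{-1}\mathbf{b}$ and so $\delta' = \delta - \beta_1^{-1}(\beta_5 b_1 + \beta_6 b_2)$, independent of $\alpha_Y$ — but this is precisely the "delicate" bookkeeping you acknowledge not carrying out, and without it the step "finally pick $\alpha'\neq\delta'$" is unjustified. You should either verify this cancellation explicitly in each of your two cases, or adopt the paper's cleaner device of keeping the $e_1$ component $\tau$ of $\mathbf{c}_Y$ as the last free parameter, which decouples the choice of $\alpha'$ from $\delta'$ by construction and lets you treat $\beta_1=0$ and $\beta_1\neq 0$ uniformly.
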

\begin{proof}
We consider two cases, depending on $\beta_5\neq 0, \beta_6=0$ and $\beta_5=0,\beta_6=1$.

{\bf Case 1.} For $\beta_5\neq 0,\beta_6=0$, take $Y=\begin{pmatrix} \alpha_Y &  \mathbf{b}_Y\\ \mathbf{c}_Y & \delta_Y
    \end{pmatrix}$ where $\mathbf{b}_Y=(0,c_3,c_2)$ and $\mathbf{c}_Y=(\tau,0,0)$. Then, 
$A-\begin{pmatrix} \beta_1 & (1,0,0) \\ (\beta_5,\beta_6,0) & \beta_8 \end{pmatrix}Y^{k_2}$ is  $$ \begin{pmatrix} \alpha - \beta_1 \alpha_Y^{k_2}-f(\alpha_Y,\delta_Y,k_2)\langle\mathbf{c}_Y,(1,0,0)\rangle& \mathbf{b} -\beta_1f(\alpha_Y,\delta_Y,k_2)\mathbf {b}_Y-\delta_Y^{k_2}(1,0,0)\\ \mathbf{c}-(1,0,0)\wedge f(\alpha_Y,\delta_Y,k_2)\mathbf{b}_Y-\alpha_Y^{k_2}(\beta_5,0,0)-\beta_8f(\alpha_Y,\delta_Y,k_2)\mathbf{c}_Y & \delta-\beta_8\delta_Y^{k_2}\end{pmatrix}.
$$
Choose $\delta_Y$ such that $b_1-\delta_Y^{k_2}=0$. Choose $\alpha_Y$ such that $f(\alpha_Y,\delta_Y,k_2)=1$ and hence $\tau$ such that $\alpha - \beta_1 \alpha_Y^{k_2}-\tau=\alpha_8^{-1}\alpha_1\alpha'$ where $\alpha'\neq \delta-\beta_8b_1$. We get, $$A- \begin{pmatrix} \beta_1 & (1,0,0) \\ (\beta_5,\beta_6,0) & \beta_8 \end{pmatrix}Y^{k_2}=\begin{pmatrix}
    \alpha_8^{-1}\alpha_1\alpha' & (0,b_2',b_3')\\
    (c_1',0,0) & \delta-\beta_8\delta_Y^{k_2}
\end{pmatrix}$$ where $b_2'=b_2-\beta_1c_3,b_3'=b_3-\beta_1c_2$ and $c_1'=c_1-\beta_5\alpha_Y^{k_2}-\beta_8\tau$.\\

{\bf Case 2.} For $\beta_5= 0,\beta_6=1$, take $Y=\begin{pmatrix} \alpha_Y &  \mathbf{b}_Y\\ \mathbf{c}_Y & \delta_Y
    \end{pmatrix}$ where $\mathbf{b}_Y=(0,c_2',c_3)$ and $\mathbf{c}_Y=(\tau,0,0)$. Then, 
$A-\begin{pmatrix} \beta_1 & (1,0,0) \\ (0,1,0) & \beta_8 \end{pmatrix}Y^{k_2}= \begin{pmatrix} \substack{\alpha - \beta_1 \alpha_Y^{k_2}\\-f(\alpha_Y,\delta_Y,k_2)\langle\mathbf{c}_Y,(1,0,0)\rangle}& \substack{\mathbf{b} -\beta_1f(\alpha_Y,\delta_Y,k_2)\mathbf {b}_Y\\-\delta_Y^{k_2}(1,0,0)-(0,1,0)\wedge \mathbf{c}_Y}\\&\\ \substack{\mathbf{c}-f(\alpha_Y,\delta_Y,k_2)((1,0,0)\wedge \mathbf{b}_Y)\\-\alpha_Y^{k_2}(0,1,0)-\beta_8\mathbf{c}_Y} & \substack{\delta-\beta_8\delta_Y^{k_2}-\\f(\alpha_Y,\delta_Y,k_2)\langle(0,1,0),\mathbf{b}_Y\rangle}\end{pmatrix}.
$
Choose $\delta_Y$ such that $b_1-\delta_Y^{k_2}=0$. Choose $\alpha_Y$ such that $f(\alpha_Y,\delta_Y,k_2)=1$. Choose $c_2'=c_2-\alpha_Y^{k_2}$. Then $\tau$ such that $\alpha - \beta_1 \alpha_Y^{k_2}-\tau=\alpha_8^{-1}\alpha_1\alpha'$ where $\alpha'\neq \delta-c_2'-\beta_8b_1$. We get, $$A- \begin{pmatrix} \beta_1 & (1,0,0) \\ (\beta_5,\beta_6,0) & \beta_8 \end{pmatrix}Y^{k_2}=\begin{pmatrix}
    \alpha_8^{-1}\alpha_1\alpha' & (0,b_2',b_3')\\
    (c_1',0,0) & \delta-c_2'-\beta_8\delta_Y^{k_2}
\end{pmatrix}$$ where $b_2'=b_2-\beta_1c_3,b_3'=b_3-\beta_1c_2+\tau$ and $c_1'=c_1-\beta_8\tau$. Note that in the second case, the above proof works for any non-zero $\beta_6$.
\end{proof}

\begin{lemma}\label{conjugate-diag}
For $\alpha_1,\alpha_8\in \F^\times$ and $A=\begin{pmatrix} \alpha_8^{-1} \alpha_1 \alpha' & \mathbf b\\ \mathbf c & \delta'
\end{pmatrix}$ such that $\alpha'\neq \delta'$ and $\langle \mathbf{b},\mathbf c \rangle = 0$. Then there always exists an $X\in \Oc(\F)$ such that $A=\begin{pmatrix} \alpha_1 & \bf 0\\
\bf 0 & \alpha_8 \end{pmatrix}X^{k_1}$.
\end{lemma}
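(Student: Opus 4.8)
The plan is to exhibit $X$ in the same shape as the ansatz of \cref{lem:scalar-k}, namely
$X=\begin{pmatrix} \alpha_X & \tau_1\mathbf b\\ \tau_2\mathbf c & \delta_X\end{pmatrix}$
for scalars $\alpha_X,\delta_X,\tau_1,\tau_2\in\F$ to be chosen. Since $\langle \tau_1\mathbf b,\tau_2\mathbf c\rangle=\tau_1\tau_2\langle\mathbf b,\mathbf c\rangle=0$, the hypothesis $\langle\mathbf b,\mathbf c\rangle=0$ lets us apply the explicit power formula recalled earlier, so that $X^{k_1}=\begin{pmatrix}\alpha_X^{k_1} & f\tau_1\mathbf b\\ f\tau_2\mathbf c & \delta_X^{k_1}\end{pmatrix}$ with $f=f(\alpha_X,\delta_X,k_1)$. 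Multiplying on the left by the diagonal octonion $\begin{pmatrix}\alpha_1 & \mathbf 0\\ \mathbf 0 & \alpha_8\end{pmatrix}$, the wedge and inner-product cross terms in the multiplication rule all vanish, and one obtains $\begin{pmatrix}\alpha_1 & \mathbf 0\\ \mathbf 0 & \alpha_8\end{pmatrix}X^{k_1}=\begin{pmatrix}\alpha_1\alpha_X^{k_1} & \alpha_1 f\tau_1\mathbf b\\ \alpha_8 f\tau_2\mathbf c & \alpha_8\delta_X^{k_1}\end{pmatrix}$. It then remains to match this against $A$ entry by entry.

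The two scalar equations $\alpha_1\alpha_X^{k_1}=\alpha_8^{-1}\alpha_1\alpha'$ and $\alpha_8\delta_X^{k_1}=\delta'$ reduce to $\alpha_X^{k_1}=\alpha_8^{-1}\alpha'$ and $\delta_X^{k_1}=\alpha_8^{-1}\delta'$, which are solvable because $\F$ is algebraically closed; I would fix any such roots $\alpha_X,\delta_X$. The hypothesis $\alpha'\neq\delta'$ then gives $\alpha_X^{k_1}\neq\delta_X^{k_1}$, hence $\alpha_X\neq\delta_X$, and the telescoping identity $(\alpha_X-\delta_X)\,f(\alpha_X,\delta_X,k_1)=\alpha_X^{k_1}-\delta_X^{k_1}$ shows $f\neq 0$. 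Since $\alpha_1,\alpha_8,f\in\F^\times$, setting $\tau_1=(\alpha_1 f)^{-1}$ and $\tau_2=(\alpha_8 f)^{-1}$ makes the two off-diagonal entries equal to $\mathbf b$ and $\mathbf c$ exactly, so this $X$ satisfies $\begin{pmatrix}\alpha_1 & \mathbf 0\\ \mathbf 0 & \alpha_8\end{pmatrix}X^{k_1}=A$.

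I do not expect a serious obstacle: the lemma is a direct construction once the right ansatz is used. The single point demanding care is the non-vanishing of $f(\alpha_X,\delta_X,k_1)$, which is precisely where the hypothesis $\alpha'\neq\delta'$ enters; it must be argued through the factorization identity above rather than the rational expression $\tfrac{\alpha_X^{k_1}-\delta_X^{k_1}}{\alpha_X-\delta_X}$, so that the reasoning stays valid when $\mathrm{char}\,\F$ divides $k_1$. One should also note that no relation between $\alpha'$ and $\delta'$ is needed beyond $\alpha'\neq\delta'$, and that the $k_1$-th roots $\alpha_X,\delta_X$ need not be unique — any choice works.
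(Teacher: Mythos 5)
Your proof is correct. The paper's own argument is slightly different in packaging: it multiplies the equation $A = A_1 X^{k_1}$ on the left by the conjugate $\bar A_1 = \begin{pmatrix}\alpha_8 & \mathbf 0\\ \mathbf 0 & \alpha_1\end{pmatrix}$, using \cref{eq:invert} to reduce to $\begin{pmatrix}\alpha_1\alpha' & \alpha_8\mathbf b\\ \alpha_1\mathbf c & \alpha_1\delta'\end{pmatrix} = \alpha_1\alpha_8 X^{k_1}$, and then simply cites \cref{lem:scalar-k} (whose hypotheses $\alpha_1\alpha'\neq\alpha_1\delta'$ and $\langle\alpha_8\mathbf b,\alpha_1\mathbf c\rangle = 0$ are satisfied). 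You instead construct $X$ directly with the same ansatz and redo the core computation of \cref{lem:scalar-k} inline, which means your left-multiplication step uses only the explicit octonion product formula rather than the $A^{-1}(AB)=B$ identity. The underlying calculation, the role of $\alpha'\neq\delta'$ in forcing $f(\alpha_X,\delta_X,k_1)\neq 0$, and the use of algebraic closure to extract $k_1$-th roots are identical; the paper's version is a touch shorter by reusing its lemma, while yours is self-contained and, as you note, explicitly careful about the factorization identity for $f$ in positive characteristic.
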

\begin{proof}
 We make use of \cref{eq:invert}. Multiplying by conjugate of $\begin{pmatrix}
    \alpha_1 & \bf 0\\
    \bf 0 & \alpha_8
\end{pmatrix}$ from left in the given equation, we get $\begin{pmatrix}
    \alpha_1\alpha' & \alpha_8\mathbf{b}\\
    \alpha_1\mathbf c & \alpha_1\delta'
\end{pmatrix}=\alpha_1\alpha_8X^{k_1}.$ By \cref{lem:scalar-k}, there exist $X\in \Oc(\F)$ such that $A-\begin{pmatrix}
    \alpha_1 & \bf 0\\
    \bf 0 & \alpha_8
\end{pmatrix}X^{k_1}=0$.
\end{proof}
\begin{theorem}
Let $A_1,A_2\in\Oc(\F)\smallsetminus\{0\}$ with $N(A_1)\neq 0$ and $A_1$ has representative $\begin{pmatrix}\alpha_1 & \bf 0\\
\bf 0 & \alpha_8 \end{pmatrix}$ under $G_2$ action. Then for $A\in\Oc(\F)$ and positive integers $k_1,k_2\geq 2$, there exist $X,Y\in\Oc(\F)$ such that $$A=A_1X^{k_1}+A_2Y^{k_2}.$$
\end{theorem}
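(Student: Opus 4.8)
The plan is to reduce to the orbit representatives of \cref{pro:reprsen-simul} and then chain the reduction lemmas of this section together with \cref{conjugate-diag}. First I would record the $G_2$-reduction already noted in \cref{sec:prelim}: since each $g\in G_2=\Aut(\Oc)$ is an algebra automorphism, the equation $A=A_1X^{k_1}+A_2Y^{k_2}$ holds if and only if $g(A)=g(A_1)\,g(X)^{k_1}+g(A_2)\,g(Y)^{k_2}$, so the map attached to $(A_1,A_2)$ is onto exactly when the one attached to $(g(A_1),g(A_2))$ is. Hence it suffices to prove the statement for orbit representatives $(A_1,A_2)$. The hypothesis that $A_1$ has diagonal representative $\begin{pmatrix}\alpha_1&\mathbf{0}\\\mathbf{0}&\alpha_8\end{pmatrix}$ picks out exactly the representatives consolidated into the three families $\mathrm{(I)}$, $\mathrm{(II)}$, $\mathrm{(III)}$ listed at the start of this section, while $N(A_1)=\alpha_1\alpha_8\neq 0$ forces $\alpha_1,\alpha_8\in\F^\times$, and $A_2\neq 0$ ensures that in family $\mathrm{(I)}$ the coefficients $\beta_1,\beta_8$ of $A_2$ are not simultaneously zero, as \cref{(1)-diag} requires.

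The core is a two-step scheme for an arbitrary target $A\in\Oc(\F)$: first absorb enough of $A$ into $A_2Y^{k_2}$, then finish with $A_1X^{k_1}$. Call $B=\begin{pmatrix}p&\mathbf{b}\\\mathbf{c}&q\end{pmatrix}\in\Oc(\F)$ \emph{admissible} if $\langle\mathbf{b},\mathbf{c}\rangle=0$ and $\alpha_8 p\neq\alpha_1 q$; writing $p=\alpha_8^{-1}\alpha_1\alpha'$ (possible since $\alpha_1,\alpha_8\in\F^\times$), the latter condition is exactly $\alpha'\neq q$, so \cref{conjugate-diag} applies to an admissible $B$ and yields $X\in\Oc(\F)$ with $B=A_1X^{k_1}$. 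Thus it is enough to produce, for each of the families $\mathrm{(I)}$, $\mathrm{(II)}$, $\mathrm{(III)}$, an element $Y\in\Oc(\F)$ with $A-A_2Y^{k_2}$ admissible. For family $\mathrm{(I)}$ this is \cref{(1)-diag}: its two possible outputs $\begin{pmatrix}\alpha_8^{-1}\alpha_1\alpha'&\mathbf{0}\\\mathbf{c}&\delta\end{pmatrix}$ with $\alpha'\neq\delta$ and $\begin{pmatrix}\alpha&\mathbf{0}\\\mathbf{c}&\alpha_1^{-1}\alpha_8\delta'\end{pmatrix}$ with $\alpha\neq\delta'$ are both admissible. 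For family $\mathrm{(II)}$ it is \cref{(2)-diag}, whose two possible outputs have off-diagonal vectors $\mathbf{0},\mathbf{c}'$ or $(0,b_2,b_3),(c_1',0,0)$ — each such pair orthogonal — and $(1,1)$-entry $\alpha_8^{-1}\alpha_1\alpha'$ with $\alpha'$ unequal to the $(2,2)$-entry. For family $\mathrm{(III)}$ it is \cref{(3)-diag}, whose output $\begin{pmatrix}\alpha_8^{-1}\alpha_1\alpha'&(0,b_2',b_3')\\(c_1',0,0)&\delta'\end{pmatrix}$ with $\alpha'\neq\delta'$ again has orthogonal off-diagonal vectors. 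In every case, feeding the admissible $B=A-A_2Y^{k_2}$ into \cref{conjugate-diag} gives $X$ with $B=A_1X^{k_1}$, i.e.\ $A=A_1X^{k_1}+A_2Y^{k_2}$.

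Essentially all of the computational weight has already been moved into \cref{lem:scalar-k}, \cref{conjugate-diag}, \cref{(1)-diag}, \cref{(2)-diag}, and \cref{(3)-diag}, so the remaining work is bookkeeping: matching each reduction lemma's output to the hypotheses of \cref{conjugate-diag}. The one spot that deserves a small remark is the second output of \cref{(1)-diag}, where it is the $(2,2)$-entry, not the $(1,1)$-entry, that is displayed in the normalized shape $\alpha_1^{-1}\alpha_8\delta'$; there one rewrites the $(1,1)$-entry as $\alpha=\alpha_8^{-1}\alpha_1\bigl(\alpha_1^{-1}\alpha_8\alpha\bigr)$ and observes that $\alpha_1^{-1}\alpha_8\alpha\neq\delta'$ is equivalent to $\alpha\neq\delta'$, so the hypothesis of \cref{conjugate-diag} is indeed met. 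I do not anticipate a genuine obstacle beyond this matching and the preliminary check that the families $\mathrm{(I)}$, $\mathrm{(II)}$, $\mathrm{(III)}$ exhaust the orbit representatives with diagonal $A_1$.
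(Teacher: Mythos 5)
Your proposal is correct and follows the same route as the paper's proof: reduce to the orbit families (I), (II), (III), use \cref{(1)-diag}, \cref{(2)-diag}, \cref{(3)-diag} to absorb part of $A$ into $A_2Y^{k_2}$ so the remainder satisfies the hypotheses of \cref{conjugate-diag}, and then apply \cref{conjugate-diag} to produce $X$. Your explicit normalization of the second output of \cref{(1)-diag} to the shape required by \cref{conjugate-diag} is a small bookkeeping step the paper glosses over, but it does not change the argument.
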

\begin{proof}
The representatives of $(A_1,A_2)$ under the simultaneous action of $G_2$ are given by
\begin{multicols}{2}
        \begin{enumerate}
        \item $\left(\begin{pmatrix}   \alpha_1 & \mathbf{0}\\  \mathbf{0} & \alpha_8 \end{pmatrix},
\begin{pmatrix} \beta_1 & \mathbf{0}\\ \mathbf{0} & \beta_8 \end{pmatrix}\right)$,
\item$\left(\begin{pmatrix} \alpha_1 & \bf0\\ \bf0 & \alpha_8 \end{pmatrix},
\begin{pmatrix} \beta_1 & (1,0,0)\\ \mathbf{0} & \beta_8
\end{pmatrix}\right)$ 
\item  $\left(\begin{pmatrix} \alpha_1 & \bf0\\ \bf0 & \alpha_8 \end{pmatrix},
\begin{pmatrix} \beta_1 & (1,0,0)\\ (\beta_5,\beta_6,0) & \beta_8 \end{pmatrix}\right)$.
    \end{enumerate}
\end{multicols}
By \cref{(1)-diag}, \cref{(2)-diag} and \cref{(3)-diag}, $A-A_2Y^{k_2}=\begin{pmatrix} \alpha_8^{-1}\alpha_1\alpha' & \mathbf{b}'\\ \mathbf{c}' & \delta' \end{pmatrix}$ such that $\alpha'\neq \delta'$ and $\langle\mathbf{b}',\mathbf{c}'\rangle=0$ in each case. By \cref{conjugate-diag}, there exist $X\in\Oc(\F)$ such that $A=A_1X^{k_1}+A_2Y^{k_2}$. 
\end{proof}

\begin{proposition}
    Let $A\in\Oc(\F)$ and $\beta_1,\beta_8\in \F$ not simultaneously zero. Then there exists $X,Y\in\Oc(\F)$ such that 
    $$A=\begin{pmatrix}
        \alpha_1 & (1,0,0)\\
        \bf 0 & \alpha_1
    \end{pmatrix}X^{k_1}+\begin{pmatrix}
        \beta_1 & \bf 0\\
        \bf 0 & \beta_8
    \end{pmatrix}Y^{k_2}.$$
\end{proposition}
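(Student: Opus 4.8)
The plan is to mimic the argument of \cref{conjugate-diag}, replacing the diagonal left coefficient there by the element $A_1=\begin{pmatrix}\alpha_1 & (1,0,0)\\ \mathbf{0} & \alpha_1\end{pmatrix}$, which is invertible since $N(A_1)=\alpha_1^2\neq 0$; write also $A_2=\begin{pmatrix}\beta_1 & \mathbf{0}\\ \mathbf{0} & \beta_8\end{pmatrix}$. First I would choose $Y\in\Oc(\F)$ so that $B:=A-A_2Y^{k_2}$ has a shape for which the equation $B=A_1X^{k_1}$ is solvable in $X$; then $A=A_1X^{k_1}+A_2Y^{k_2}$, which is exactly what is wanted.

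The first real step is to turn the solvability of $B=A_1X^{k_1}$ into an explicit condition on the entries of $B$. Since $A_1\overline{A_1}=\overline{A_1}A_1=N(A_1)$ one has $\overline{A_1}=N(A_1)A_1^{-1}$, and applying \cref{eq:invert} to $A_1$ and to $A_1^{-1}$ (note $(A_1^{-1})^{-1}=A_1$) yields $A_1(\overline{A_1}C)=N(A_1)C$ for every $C\in\Oc(\F)$; hence $B=A_1X^{k_1}$ holds for some $X$ if and only if $\overline{A_1}B=N(A_1)X^{k_1}$ does. Writing $B=\begin{pmatrix}b_{11} & \mathbf{p}\\ \mathbf{q} & b_{22}\end{pmatrix}$ and multiplying out,
$$\overline{A_1}B=\begin{pmatrix}\alpha_1 b_{11}-q_1 & \alpha_1\mathbf{p}-b_{22}e_1\\ \alpha_1\mathbf{q}-e_1\wedge\mathbf{p} & \alpha_1 b_{22}\end{pmatrix},$$
and, using $\langle e_1\wedge\mathbf{p},\mathbf{p}\rangle=\langle e_1\wedge\mathbf{p},e_1\rangle=0$, the two off-diagonal vectors of $\overline{A_1}B$ are orthogonal exactly when $\alpha_1\langle\mathbf{p},\mathbf{q}\rangle=b_{22}q_1$, while its diagonal entries differ exactly when $\alpha_1(b_{11}-b_{22})\neq q_1$. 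By \cref{lem:scalar-k} (with the invertible scalar $N(A_1)$), if $B$ satisfies these two conditions then $\overline{A_1}B=N(A_1)X^{k_1}$ for some $X$, and therefore $B=A_1X^{k_1}$.

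It then remains to exhibit $Y$ so that $B=A-A_2Y^{k_2}$ meets the two conditions; here I would take $Y$ triangular (so the hypothesis $\langle\mathbf{b}_Y,\mathbf{c}_Y\rangle=0$ of the power formula is automatic), for which $A_2Y^{k_2}=\begin{pmatrix}\beta_1\alpha_Y^{k_2} & \beta_1 f\mathbf{b}_Y\\ \beta_8 f\mathbf{c}_Y & \beta_8\delta_Y^{k_2}\end{pmatrix}$ with $f=f(\alpha_Y,\delta_Y,k_2)$. If $\beta_1\neq 0$, take $\mathbf{c}_Y=\mathbf{0}$ and $\delta_Y=0$, so that $\mathbf{q}=\mathbf{c}$ and $b_{22}=\delta$ are left untouched while $b_{11}=\alpha-\beta_1\alpha_Y^{k_2}$ and $\mathbf{p}=\mathbf{b}-\beta_1\alpha_Y^{k_2-1}\mathbf{b}_Y$ are freely adjustable: pick $\alpha_Y\neq 0$ with $\alpha_1(b_{11}-\delta)\neq c_1$, then pick $\mathbf{p}$ on the affine hyperplane $\{\mathbf{p}:\alpha_1\langle\mathbf{p},\mathbf{c}\rangle=\delta c_1\}$ and solve for $\mathbf{b}_Y$. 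If $\beta_1=0$ (so $\beta_8\neq 0$), take $\mathbf{b}_Y=\mathbf{0}$, so that $b_{11}=\alpha$ and $\mathbf{p}=\mathbf{b}$ are fixed: pick $\delta_Y$ with $\delta-\beta_8\delta_Y^{k_2}\neq\alpha$ and $\alpha_Y$ with $f\neq 0$, and set $\mathbf{c}_Y=(\beta_8 f)^{-1}\mathbf{c}$, which makes $\mathbf{q}=\mathbf{0}$; then the orthogonality condition reads $0=0$ and the diagonal condition reads $\alpha\neq\delta-\beta_8\delta_Y^{k_2}$, which holds. In both cases such choices exist because the relevant polynomials in $\alpha_Y$ (respectively $\delta_Y$) are not identically zero and $\F$ is infinite; feeding the resulting $B$ into the previous paragraph finishes the proof.

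I expect the main obstacle to be the non-associative bookkeeping underlying the second paragraph: one has to be careful that $A_1(\overline{A_1}C)=N(A_1)C$ is obtained from $\overline{A_1}=N(A_1)A_1^{-1}$ together with \cref{eq:invert}, rather than from associativity, and one must check that the exterior-product terms appearing in $\overline{A_1}B$ collapse so that orthogonality of its off-diagonal vectors reduces to the clean relation $\alpha_1\langle\mathbf{p},\mathbf{q}\rangle=b_{22}q_1$. Once those two conditions on $B$ are isolated, the choice of $Y$ is just a generic-position argument.
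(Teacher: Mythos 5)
Your proposal is correct and follows essentially the same strategy as the paper: choose a triangular $Y$ so that $B:=A-A_2Y^{k_2}$ has a good shape, left-multiply by $\overline{A_1}$ using the identity $\overline{A_1}=N(A_1)A_1^{-1}$ together with the relation $A^{-1}(AC)=C$, and then invoke \cref{lem:scalar-k}. The only organizational difference is that you first extract the clean pair of sufficient conditions on $B$ (namely $\alpha_1\langle\mathbf{p},\mathbf{q}\rangle=b_{22}q_1$ and $\alpha_1(b_{11}-b_{22})\neq q_1$) and then pick $Y$ to satisfy them, whereas the paper directly fixes one off-diagonal block of $\overline{A_1}B$ to be zero in each subcase; your formulation is arguably cleaner, and in the $\beta_1=0$ subcase it sidesteps a small typo in the paper's stated $(2,1)$-block, but the underlying computation is the same. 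Both case analyses and the generic-position choices of $\alpha_Y,\delta_Y,\mathbf{b}_Y,\mathbf{c}_Y$ check out.
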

\begin{proof}
It is enough to consider the case $\beta_1\beta_8=0$. When $\beta_8=0$ and $\beta_1\neq 0$, choose $Y=\begin{pmatrix}
        \alpha_Y & \mathbf{b}_Y\\
        \mathbf{0} & 0
    \end{pmatrix}$, which gives
\begin{align*}
    A-\begin{pmatrix}
        \beta_1 & \mathbf{0}\\
        \mathbf{0} & 0 
    \end{pmatrix}Y^{k_2}=\begin{pmatrix}
    \alpha-\beta_1\alpha_Y^{k_2} & \mathbf{b}-\beta_1\alpha_Y^{k_2-1}\mathbf{b}_Y\\
    \mathbf{c} & \delta
    \end{pmatrix}.
\end{align*}
Choose $\alpha_Y\neq 0$ such that $\alpha_1(\alpha-\beta_1\alpha_Y^{k_2})-c_1\neq \alpha_1\delta$ and $\mathbf{b}_Y$ such that $\mathbf{b}'=\mathbf{b}-\beta_1\alpha_Y^{k_2-1}\mathbf{b}_Y$ satisfies $\alpha_1\mathbf{b}'-\delta(1,0,0)=\mathbf{0}$. 
Then using \cref{eq:invert} we need to find solution for $X$ such that $N(A_1)X^{k_1}=\begin{pmatrix}
    \alpha_1(\alpha-\beta_1\alpha_Y^{k_2})-c_1 & \mathbf{0}\\\alpha_1\mathbf{c} & \alpha_1\delta
\end{pmatrix}$, 
which exists by \cref{lem:scalar-k}. 

In case $\beta_1=0$ and $\beta_8\neq0$, choose $Y=\begin{pmatrix}
    0 & \mathbf{0}\\
    \mathbf{c}_Y & \delta_Y
\end{pmatrix}$ where $\delta_Y\neq0$ and satisfies $\alpha\neq \delta-\beta_8\delta_Y^{k_2}$, and $\mathbf{c}_Y$ satisfies $\mathbf{c}-\beta_8\delta_Y^{k_2-1}\mathbf{c}_Y=(1,0,0)\wedge\mathbf{b}$. Then 
\begin{align*}
    A-\begin{pmatrix}
        0 & \mathbf{0}\\
        \mathbf{0} & \beta_8
    \end{pmatrix}Y^{k_2}=\begin{pmatrix}
        \alpha & \mathbf{b}\\
        (1,0,0)\wedge\mathbf{b} & \delta-\beta_8\delta_Y^{k_2}
    \end{pmatrix}.
\end{align*}
Then again using \cref{eq:invert} we need existence of $X$ such that $N(A_1)X^{k_1}=\begin{pmatrix}
    \alpha_1\alpha &\mathbf{b}-(\delta-\beta_8\delta_Y^{k_2},0,0)\\\mathbf{0} & \alpha_1(\delta-\beta_8\delta_Y^{k_2})
\end{pmatrix}$.
Such an $X$ exists because of  \cref{lem:scalar-k}.
\end{proof}
\begin{proposition} Let $A\in\Oc(\F)$, $\beta_i\in \F$, for $i=1,2,3,8$. Then there exist $X,Y\in\Oc(\F)$ such that
    $$A=\begin{pmatrix} \alpha_1 & (1,0,0)\\ \bf 0 & \alpha_1 \end{pmatrix}X^{k_1}+
\begin{pmatrix} \beta_1 & (\beta_2,\beta_3,0)\\ \bf 0 & \beta_8
\end{pmatrix}Y^{k_2}$$ 
when one of the following holds:
\begin{enumerate}
    \item $\beta_2\neq 0$, $\beta_3=0$ and $\beta_1=\beta_8$ , or 
    \item $\beta_3=1$ and $\beta_2=0$.
\end{enumerate}
\end{proposition}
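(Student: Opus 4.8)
The plan, in the spirit of \cref{conjugate-diag}, is to exploit $N(A_1)=\alpha_1^2\neq 0$. Fix $A\in\Oc(\F)$; for a $Y\in\Oc(\F)$ still to be chosen, put $B:=A-A_2Y^{k_2}=\begin{pmatrix}\alpha' & \mathbf b'\\ \mathbf c' & \delta'\end{pmatrix}$, so the claim reduces to finding $Y$ for which some $X$ satisfies $A_1X^{k_1}=B$. Multiplying this on the left by $\overline{A_1}$ and using $\overline{A_1}(A_1Z)=N(A_1)Z$ for all $Z$ (which is \cref{eq:invert} together with $A_1^{-1}=N(A_1)^{-1}\overline{A_1}$), and using that left multiplication by the invertible element $\overline{A_1}$ is injective, one reduces further to: choose $Y$ so that $\overline{A_1}B$ has distinct diagonal entries and off-diagonal vectors of inner product $0$. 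Then \cref{lem:scalar-k}, applied with the scalar $N(A_1)$, yields $X$ with $\overline{A_1}B=N(A_1)X^{k_1}$, hence $A_1X^{k_1}=B$.

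A direct computation with the multiplication rule gives $\overline{A_1}=\begin{pmatrix}\alpha_1 & -e_1\\ \mathbf 0 & \alpha_1\end{pmatrix}$ and
\begin{align*}
\overline{A_1}B=\begin{pmatrix}\alpha_1\alpha'-c_1' & \alpha_1\mathbf b'-\delta' e_1\\ \alpha_1\mathbf c'-e_1\wedge\mathbf b' & \alpha_1\delta'\end{pmatrix},
\end{align*}
where $c_1'$ is the first coordinate of $\mathbf c'$. Since $\langle e_1\wedge\mathbf b',\mathbf b'\rangle=\langle e_1\wedge\mathbf b',e_1\rangle=0$, the inner product of the two off-diagonal blocks equals $\alpha_1(\alpha_1\langle\mathbf b',\mathbf c'\rangle-c_1'\delta')$, so it suffices to find $Y$ with
\begin{align*}
\text{(I)}\quad\alpha_1\langle\mathbf b',\mathbf c'\rangle=c_1'\delta'\qquad\text{and}\qquad\text{(II)}\quad\alpha_1(\alpha'-\delta')\neq c_1'.
\end{align*}

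In all cases I would take $Y=\begin{pmatrix}\alpha_Y & \mathbf b_Y\\ \mathbf c_Y & \delta_Y\end{pmatrix}$ with $\mathbf b_Y$ and $\mathbf c_Y$ supported on disjoint subsets of $\{1,2,3\}$, so that $\langle\mathbf b_Y,\mathbf c_Y\rangle=0$ and $Y^{k_2}=\begin{pmatrix}\alpha_Y^{k_2} & f\mathbf b_Y\\ f\mathbf c_Y & \delta_Y^{k_2}\end{pmatrix}$, $f=f(\alpha_Y,\delta_Y,k_2)$; then $A_2Y^{k_2}$ is read off the multiplication rule and $\alpha',\mathbf b',\mathbf c',\delta'$ become explicit affine functions of $f\mathbf b_Y$, $f\mathbf c_Y$, $\alpha_Y^{k_2}$, $\delta_Y^{k_2}$, making (I) one linear equation and (II) one linear inequality. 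For case (1), where $A_2=\begin{pmatrix}\beta_1 & (\beta_2,0,0)\\ \mathbf 0 & \beta_1\end{pmatrix}$ with $\beta_2\neq 0$: when $\beta_1\neq 0$ take $\mathbf c_Y=\mathbf 0$, $\beta_1\delta_Y^{k_2}=\delta$ and $\mathbf b_Y=(\beta_1 f)^{-1}(\mathbf b-\beta_2\delta_Y^{k_2}e_1)$, which makes $\mathbf b'=\mathbf 0$ and $\delta'=0$, so (I) holds identically and (II) reads $\alpha_1\beta_1\alpha_Y^{k_2}\neq\alpha_1\alpha-c_1$, valid for almost every $\alpha_Y$ with $f\neq 0$; when $\beta_1=\beta_8=0$ take $\mathbf b_Y=(0,b_{Y2},b_{Y3})$, $\mathbf c_Y=(c_{Y1},0,0)$ and $\alpha_1\beta_2\delta_Y^{k_2}=\alpha_1b_1-\delta$, so (I) collapses to $\alpha_1\beta_2 f(b_2b_{Y3}-b_3b_{Y2})=-\alpha_1(b_2c_2+b_3c_3)$ (solvable in $(b_{Y2},b_{Y3})$ when $(b_2,b_3)\neq(0,0)$ and an identity otherwise) and (II) reads $\alpha_1\beta_2 f c_{Y1}\neq\alpha_1(\alpha-\delta)-c_1$, valid for almost every $c_{Y1}$.

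Case (2), with $A_2=\begin{pmatrix}\beta_1 & (0,1,0)\\ \mathbf 0 & \beta_8\end{pmatrix}$, is treated by the same scheme, now using $\mathbf b_Y$ as the free vector when $\beta_1\neq 0$ and $\mathbf c_Y$ (together with $\delta_Y^{k_2}$) when $\beta_1=0$ — since then $\mathbf b'$ has its first and third coordinates pinned to those of $\mathbf b$ — and choosing $\delta_Y^{k_2}$ so that $\delta'=0$ whenever $\beta_8\neq 0$. The step I expect to be the real obstacle is the bookkeeping of the degenerate configurations of $A$ in which the coefficient vector of the linear equation (I) vanishes: one must then check, using the relations among the entries of $A$ forced in exactly those configurations (for instance $\mathbf b'=\mathbf 0$, or in case (2) the relations $b_3=\beta_1c_1$ and $\delta=\alpha_1(b_1+\beta_1c_3)$), that the right-hand side of (I) also vanishes, so that (I) imposes nothing; and in those same configurations, where the coordinate of the free vector that enters (II) is determined on the solution set of (I), one must use the remaining scalar freedom — $\alpha_Y^{k_2}$ when $\beta_1\neq 0$, and $\delta_Y^{k_2}$ or $c_{Y2}$ when $\beta_1=0$ — to satisfy (II). Once these finitely many degenerate sub-cases are verified, a valid $Y$, and hence $X$, exists in all cases, completing the proof.
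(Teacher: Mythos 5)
Your reduction via left multiplication by $\overline{A_1}$, the computation of $\overline{A_1}B$, and the conditions
\begin{align*}
\text{(I)}\quad\alpha_1\langle\mathbf b',\mathbf c'\rangle=c_1'\delta',\qquad
\text{(II)}\quad\alpha_1(\alpha'-\delta')\neq c_1'
\end{align*}
are correct; this is essentially the paper's own strategy (the conjugation trick of \cref{conjugate-diag} followed by \cref{lem:scalar-k}), just packaged as an abstract pair of scalar constraints rather than written out line-by-line for each $Y$. Your treatment of case (1) is complete: both the subcase $\beta_1\neq0$ (forcing $\mathbf b'=\mathbf 0$, $\delta'=0$) and the subcase $\beta_1=\beta_8=0$ (solving (I) in $(b_{Y2},b_{Y3})$, then (II) via $c_{Y1}$) check out.

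Case (2) is not a proof, and there is a real obstacle you have not dealt with. Your default move for $\beta_1\neq0$ is to choose $\mathbf b_Y=(\beta_1 f)^{-1}\bigl(\mathbf b-\delta_Y^{k_2}e_2\bigr)$ so that $\mathbf b'=\mathbf 0$, making (I) collapse to $c_1'\delta'=0$. When $\beta_8\neq0$ you can also kill $\delta'$ and all is well. But when $\beta_8=0$, $\delta'=\delta$ is a fixed entry of $A$, and with $\mathbf b'=\mathbf 0$ one computes $c_1'=c_1-\beta_1^{-1}b_3$, so (I) becomes $(c_1-\beta_1^{-1}b_3)\,\delta=0$ --- a genuine constraint on $A$ that does not hold in general. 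So in the subcase $\beta_1\neq0$, $\beta_8=0$ you cannot take the ``$\mathbf b'=\mathbf 0$'' shortcut; you must instead e.g.\ first choose $b_{Y3}$ to kill $c_1'$, then solve $\langle\mathbf b',\mathbf c'\rangle=0$ using $b_{Y2}$ (if $c_2\neq0$) or $b_{Y1}$ (if $c_2=0$ and $b_3\neq\beta_1 c_1$), with (II) rescued via $\alpha_Y$. A parallel splitting is needed when $\beta_1=0$, $\beta_8=0$, where your chosen $\mathbf c_Y$-only parametrization leaves (I) unsolvable whenever $c_2=0$, forcing you to bring $\mathbf b_Y$ back into play. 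Your closing paragraph explicitly defers all of this (``Once these finitely many degenerate sub-cases are verified\dots''), and your heuristic --- that in the degenerate configurations the right-hand side of (I) vanishes automatically --- is simply false, as the $\beta_8=0$ computation above shows. So the proposal correctly identifies the mechanism but leaves case (2) with an unverified, and as stated incorrect, reduction; the paper avoids the issue by exhibiting a single explicit $Y$ (with $\mathbf b_Y$ and $\mathbf c_Y$ on complementary coordinate supports) that simultaneously handles all $\beta_1,\beta_8$.
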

\begin{proof}
    Consider the first case, where $\beta_2\neq 0$, $\beta_3=0$ and $\beta_1=\beta_8$. \\
    {\bf{Case 1.}} If $\beta_1\neq 0$. Let $Y=\begin{pmatrix}
        \alpha_Y & \mathbf{b}_Y\\
        \bf 0 & 0
    \end{pmatrix}$ where $\alpha_Y\neq 0$ and is such that $\alpha_1(\alpha-\beta_1\alpha_Y^{k_2})-c_1\neq \alpha_1\delta$ and $\mathbf{b}_Y$ satisfies $\alpha_1(b-\beta_1\alpha_Y^{k_2-1}\mathbf{b}_Y)-\delta(1,0,0)=0$. Then $$A-A_2Y^{k_2}=\begin{pmatrix}
        \alpha-\beta_1\alpha_Y^{k_2} & \alpha_1^{-1}\delta(1,0,0)\\
        (\beta_2,0,0)\wedge\mathbf{b}_Y & \delta
    \end{pmatrix}=A_1X^{k_1}.$$
    Using \cref{eq:invert}, we get 
    $$\begin{pmatrix}
        \alpha_1(\alpha-\beta_1\alpha_Y^{k_2})-c_1 & \bf 0\\
        \mathbf{c}' & \alpha_1\delta
    \end{pmatrix}=N(A_1)X^{k_1}.$$
    {\bf{Case 2.}} If $\beta_1=0$. Let $\delta_Y$ be such that $b_1-\delta=\delta_Y^{k_2}$ and $\alpha_Y$ be such that $f(\alpha_Y,\delta_Y,k_2)=1$.  Let $Y=\begin{pmatrix}
        \alpha_Y & \mathbf{b}_Y\\
        \mathbf{c}_Y  & \delta_Y
    \end{pmatrix}$. Here, $\mathbf{c}_Y=((c_Y)_1,0,0)$ and $(c_Y)_1$ is such that $\alpha_1(\alpha-(c_Y)_1\beta_2)-c_1\neq \alpha_1\delta$ and $\mathbf{b}_Y=(0,(b_Y)_2,(b_Y)_3)$ satisfies $\alpha_1(\mathbf{c}-\beta_2\mathbf{b}_Y)-(1,0,0)\wedge\mathbf{b}=(c_1,0,0)$. Then $$A-A_2Y^{k_2}=\begin{pmatrix}
        \alpha-(c_Y)_1\beta_2 & (\delta,b_2,b_3)\\
        \mathbf{c}-(\beta_2,0,0)\wedge\mathbf{b}_Y & \delta
    \end{pmatrix}=A_1X^{k_1}.$$
    Using \cref{eq:invert}, we get 
    $$\begin{pmatrix}
        \alpha_1(\alpha-(c_Y)_1\beta_2)-c_1 & (0,b_2',b_3')\\
        (c_1,0,0) & \alpha_1\delta
    \end{pmatrix}=N(A_1)X^{k_1}.$$
 Consider the second case, $\beta_3=1$ and $\beta_2=0$.\\
 Let $\delta_Y$ such that $b_2=\delta_Y^{k_2}$ and $\alpha_Y$ such that $f(\alpha_Y,\delta_Y,k_2)=1$. Let $Y=\begin{pmatrix}
     \alpha_Y & \mathbf{b}_Y\\
     \mathbf{c}_Y & \delta_Y
 \end{pmatrix}$ where $\mathbf{b}_Y=(c_3,0,c_1)$ and $\mathbf{c}_Y=(0,(c_Y)_2,0)$ such that $\alpha-\beta_1\alpha_Y^{k_2}-(c_Y)_2\neq \delta-\beta_8b_2$. Then $$A-A_2Y^{k_2}=\begin{pmatrix}
        \alpha-\beta_1\alpha_Y^{k_2}-(c_Y)_2 & (b_1-\beta_1c_3,0,b_3-\beta_1c_1)\\
        (0,c_2-\beta_8(c_Y)_2,0) & \delta-\beta_8b_2
    \end{pmatrix}=A_1X^{k_1}.$$
    Using \cref{eq:invert}, we get 
    $$\begin{pmatrix}
        \alpha_1(\alpha-\beta_1\alpha_Y^{k_2}-(c_Y)_2) & \alpha_1(b_1-\beta_1c_3,0,b_3-\beta_1c_1)-(\delta-\beta_8b_2,0,0)\\
      (0,\alpha_1(c_2-\beta_8(c_Y)_2)+b_3-\beta_1c_1,0)  & \alpha_1(\delta-\beta_8b_2)
    \end{pmatrix}=N(A_1)X^{k_1}.$$
    In all the cases discussed above, we have 
    $$\begin{pmatrix}
        \alpha' & \mathbf{b}'\\
        \mathbf{c}' & \delta'
    \end{pmatrix}=N(A_1)X^{k_1}$$ where $\alpha'\neq \delta'$ and $\langle\mathbf{b}',\mathbf{c}'\rangle=0$. By \cref{lem:scalar-k}, there exist $X$ such that $A=A_1X^{k_1}+A_2Y^{k_2}$.
\end{proof}

\color{black}
\begin{proposition}
    Let $A\in\Oc(\F)$ and $\beta_1,\beta_8\in \F$. Then there exists $X,Y\in\Oc(\F)$ such that 
    $$A=\begin{pmatrix}
        \alpha_1 & (1,0,0)\\
        \bf 0 & \alpha_1
    \end{pmatrix}X^{k_1}+\begin{pmatrix}
        \beta_1 & \bf 0\\
        (\beta_5,\beta_6,0)  & \beta_8
    \end{pmatrix}Y^{k_2}$$
    where either $\beta_5\neq 0$ and $\beta_6=0$ or $\beta_5=0$ and $\beta_6=1$.
\end{proposition}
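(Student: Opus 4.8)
The plan follows the same template as the three preceding propositions of this section, using that $A_1=\begin{pmatrix}\alpha_1 & (1,0,0)\\ \mathbf{0} & \alpha_1\end{pmatrix}$ is invertible, with $N(A_1)=\alpha_1^2\neq 0$. First I would rewrite the equation: multiplying $A=A_1X^{k_1}+A_2Y^{k_2}$ on the left by $\overline{A_1}$ and invoking \cref{eq:invert} in the form $\overline{A_1}(A_1Z)=N(A_1)Z$ turns it into
\[
N(A_1)\,X^{k_1}=\overline{A_1}\bigl(A-A_2Y^{k_2}\bigr).
\]
Writing $C=A-A_2Y^{k_2}=\begin{pmatrix}\alpha' & \mathbf{b}'\\ \mathbf{c}' & \delta'\end{pmatrix}$ and computing with $\overline{A_1}=\begin{pmatrix}\alpha_1 & -(1,0,0)\\ \mathbf{0} & \alpha_1\end{pmatrix}$ gives
\[
\overline{A_1}C=\begin{pmatrix}\alpha_1\alpha'-c_1' & \alpha_1\mathbf{b}'-\delta'(1,0,0)\\ \alpha_1\mathbf{c}'-(1,0,0)\wedge\mathbf{b}' & \alpha_1\delta'\end{pmatrix},
\]
and a short calculation (using $\langle (1,0,0)\wedge\mathbf{b}',\mathbf{b}'\rangle=\langle (1,0,0)\wedge\mathbf{b}',(1,0,0)\rangle=0$) shows that the inner product of the two off-diagonal vectors equals $\alpha_1^2\langle\mathbf{b}',\mathbf{c}'\rangle-\alpha_1\delta'c_1'$. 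So, by \cref{lem:scalar-k} applied with the scalar $N(A_1)=\alpha_1^2\in\F^\times$, it is enough to choose $Y$ so that $C=A-A_2Y^{k_2}$ satisfies $\alpha_1\langle\mathbf{b}',\mathbf{c}'\rangle=\delta'c_1'$ and $\alpha_1\alpha'-c_1'\neq\alpha_1\delta'$; when one can additionally arrange $c_1'=0$ and $\langle\mathbf{b}',\mathbf{c}'\rangle=0$, these collapse to the single requirement $\alpha'\neq\delta'$.

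Next I would take $Y=\begin{pmatrix}\alpha_Y & \mathbf{b}_Y\\ \mathbf{c}_Y & \delta_Y\end{pmatrix}$ with $\langle\mathbf{b}_Y,\mathbf{c}_Y\rangle=0$, so that $Y^{k_2}=\begin{pmatrix}\alpha_Y^{k_2} & f\,\mathbf{b}_Y\\ f\,\mathbf{c}_Y & \delta_Y^{k_2}\end{pmatrix}$ with $f=f(\alpha_Y,\delta_Y,k_2)$, which is nonzero whenever $\alpha_Y^{k_2}\neq\delta_Y^{k_2}$. Expanding $A_2Y^{k_2}$ for $A_2=\begin{pmatrix}\beta_1 & \mathbf{0}\\ (\beta_5,\beta_6,0) & \beta_8\end{pmatrix}$, the bottom-left entry of $A-A_2Y^{k_2}$ is $\mathbf{c}-\beta_8 f\,\mathbf{c}_Y-\alpha_Y^{k_2}(\beta_5,\beta_6,0)$. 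I would split into the two cases $(\beta_5,\beta_6,0)=(\beta_5,0,0)$ with $\beta_5\neq 0$, and $(\beta_5,\beta_6,0)=(0,1,0)$, and in each case split further on whether $\beta_8\neq 0$ or $\beta_8=0$. When $\beta_8\neq 0$ the vector $\mathbf{c}_Y$ can be used to cancel the whole bottom-left entry, so $\mathbf{c}'=\mathbf{0}$; then $\langle\mathbf{b}_Y,\mathbf{c}_Y\rangle=0$ is a single linear condition on $\mathbf{b}_Y$, and $\delta_Y$ (or an unconstrained coordinate of $\mathbf{b}_Y$) is used to make $\alpha'\neq\delta'$. When $\beta_8=0$ only the scalar $\alpha_Y^{k_2}$ is available to modify one coordinate of the bottom-left entry; in the subcase $(\beta_5,0,0)$ one sets $\alpha_Y^{k_2}=c_1/\beta_5$ to get $c_1'=0$ and then arranges $\langle\mathbf{b}',\mathbf{c}'\rangle=0$ using the free coordinates of $\mathbf{c}_Y$ (and of $\mathbf{b}_Y$ when $\beta_1\neq 0$), while in the subcase $(0,1,0)$ one instead steers $\delta'$ and $\langle\mathbf{b}',\mathbf{c}'\rangle$ directly, using that $b_1'$ (hence $\langle\mathbf{b}',\mathbf{c}'\rangle$ when $c_1\neq 0$) and $\delta'$ depend on disjoint free parameters. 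In every subcase $C$ ends up of the form required by \cref{lem:scalar-k}, which then produces $X$, exactly as \cref{conjugate-diag} was used earlier.

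The main obstacle is the case $\beta_8=0$ (and, within it, $\beta_1=0$): here $A_2Y^{k_2}$ has almost no freedom in its bottom-left component, so the device of clearing $\mathbf{c}'$ altogether is unavailable, and one must instead check that the system given by $\alpha_1\langle\mathbf{b}',\mathbf{c}'\rangle=\delta'c_1'$ together with the open condition $\alpha_1\alpha'-c_1'\neq\alpha_1\delta'$ is solvable in the remaining entries of $Y$. The point to verify is that the only sub-sub-cases in which the relevant linear coefficients all degenerate (e.g.\ $c_1=c_3=0$, or $c_1=0$ with $\mathbf{c}'=\mathbf{0}$ after setting $\alpha_Y^{k_2}=c_2$) are precisely those in which $\langle\mathbf{b}',\mathbf{c}'\rangle$ and $\delta'c_1'$ both vanish automatically, so the equation $\alpha_1\langle\mathbf{b}',\mathbf{c}'\rangle=\delta'c_1'$ costs nothing and one is left only with the easily arranged inequality on the diagonal of $\overline{A_1}C$.
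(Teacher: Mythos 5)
Your overall reduction is the same as the paper's: multiply $A_1X^{k_1}=A-A_2Y^{k_2}$ on the left by $\overline{A_1}$, observe that $N(A_1)=\alpha_1^2\neq 0$, compute $\overline{A_1}C$, and note that \cref{lem:scalar-k} applies once the off-diagonal inner product of $\overline{A_1}C$ vanishes and its diagonal entries differ. Your formula for that inner product, $\alpha_1^2\langle\mathbf{b}',\mathbf{c}'\rangle-\alpha_1\delta'c_1'$, is correct, and so is the observation that it simplifies once $c_1'=0$ and $\langle\mathbf{b}',\mathbf{c}'\rangle=0$.

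Where you diverge is in the choice of $Y$, and this is where a genuine gap remains. You propose to use $\mathbf{c}_Y$ to annihilate the entire bottom-left entry of $A-A_2Y^{k_2}$, which forces a split on whether $\beta_8$ is zero; inside $\beta_8=0$ you split again on $\beta_1$, and in the hardest residual sub-cases you only write ``the point to verify is that\dots'' without actually verifying it. That verification is exactly the content of the proposition, so the argument is not complete as written. The paper avoids this entirely by choosing $Y$ differently: in each of the two cases ($\beta_5\neq 0,\beta_6=0$ and $\beta_5=0,\beta_6=1$) it takes $\langle\mathbf{b}_Y,\mathbf{c}_Y\rangle=0$, fixes $\alpha_Y^{k_2}$ to kill $c_1$ (resp.\ $c_2$), uses the two free coordinates of $\mathbf{c}_Y$ to kill two coordinates of $\mathbf{b}'$ through the wedge-product term $(\beta_5,\beta_6,0)\wedge\mathbf{c}_Y$, picks $\delta_Y$ so that $f(\alpha_Y,\delta_Y,k_2)=1$, and then has a genuine scalar free parameter $\tau$ (resp.\ $\tau_1$) left over to force $\alpha'\neq\delta'$ because it enters $\delta'$ with the nonzero coefficient $\beta_5$ (resp.\ $1$). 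This works \emph{uniformly} in $\beta_1$ and $\beta_8$, so no sub-case on those ever arises. In short: your template is right and matches the paper, but your specific parametrization of $Y$ produces sub-cases that you do not close, whereas the paper's parametrization has enough free scalars to avoid them altogether.
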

\begin{proof}
    Consider the following two cases:\\
    {\bf{Case 1.}} When $\beta_5\neq 0$ and $\beta_6=0$. Let $\alpha_Y^{k_1}=\beta_5^{-1}c_1$ and $\delta_Y$ be such that $f(\alpha_Y,\delta_Y,k_2)=1$. For $Y=\begin{pmatrix}
        \alpha_Y & \mathbf{b}_Y\\
        \mathbf{c}_Y & \delta_Y
    \end{pmatrix}$ where $\mathbf{c}_Y=\beta_5^{-1}(0,b_3,b_2)$ and $\mathbf{b}_Y=(\tau,0,0)$ such that $\delta-\beta_5\tau-\beta_8\delta_Y^{k_2}\neq \alpha-\beta_1\beta_5^{-1}c_1$, we get 
    $$\begin{pmatrix}
        \alpha' & (b_1-\beta_1\tau,0,0)\\
        (0,c_2-\beta_8\beta_5^{-1}b_3,c_3-\beta_8\beta_5^{-1}b_2) & \delta'
    \end{pmatrix}=A-A_2Y^{k_2}=A_1X^{k_1}$$
   where $\alpha'=\alpha-\beta_1\alpha_Y^{k_2}$, $\delta'=\delta-\beta_5\tau-\beta_8\delta_Y^{k_2}$ and $\alpha'\neq \delta'$. Multiplying the above equation by conjugate of $A_1$, we get 
   $$\begin{pmatrix}
       \alpha_1\alpha' & (\alpha_1(b_1-\beta_1\tau)-\delta',0,0)\\
        \alpha_1(0,c_2-\beta_8\beta_5^{-1}b_3,c_3-\beta_8\beta_5^{-1}b_2) & \alpha_1\delta'
   \end{pmatrix}=N(A_1)X^{k_1}.$$ By using \cref{lem:scalar-k}, there exist $X\in\Oc(\F)$ such that $A=A_1X^{k_1}+A_2Y^{k_2}$.\\

   {\bf{Case 2.}} When $\beta_5= 0$ and $\beta_6=1$. Let $\alpha_Y^{k_2}=c_2$ and $\delta_Y$ be such that $f(\alpha_Y,\delta_Y,k_2)=1$. For $Y=\begin{pmatrix}
        \alpha_Y & \mathbf{b}_Y\\
        \mathbf{c}_Y & \delta_Y
    \end{pmatrix}$ where $\mathbf{c}_Y=(b_3,0,b_2-\tau_2)$ and $\mathbf{b}_Y=(0,\tau_1,0)$ such that $\tau_1$ satisfies $\alpha_1(\delta-\tau_1-\beta_8\delta_Y^{k_2})\neq \alpha_1(\alpha-\beta_1c_2)-c_1\beta_8b_3$ and $\tau_2=\alpha_1^{-1}(\delta-\tau_1-\beta_8\delta_Y^{k_2})$. We get 
    $$\begin{pmatrix}
        \alpha' & (\tau',b_2-\beta_1\tau_1,0)\\
        (c_1-\beta_8b_3,0,c_3-\beta_8(b_1-\tau_2)) & \delta'
    \end{pmatrix}=A-A_2Y^{k_2}=A_1X^{k_1}$$
   where $\alpha'=\alpha-\beta_1c_2$, $\delta'=\delta-\tau_1-\beta_8\delta_Y^{k_2}$. Multiplying the above equation by conjugate of $A_1$, we get 
   $$\begin{pmatrix}
       \alpha_1\alpha'-c_1+\beta_8b_3 & (0,b_2-\beta_1\tau,0)\\
        \alpha_1(c_1-\beta_8b_3,0,c_3-\beta_8(b_1+\tau_2)+b_1\tau) & \alpha_1\delta'
   \end{pmatrix}=N(A_1)X^{k_1}.$$  Since $\alpha_1\alpha'-c_1+\beta_8b_3\neq \alpha_1\delta'$, using \cref{lem:scalar-k}, there exist $X\in\Oc(\F)$ such that $A=A_1X^{k_1}+A_2Y^{k_2}$.
\end{proof}

\section{When both of the coefficients are non-invertible}\label{sec:thm-2}
In the section, we look at the case when both $A_1$ and $A_2$ are non-unit. We will consider the cases depending on orbit representatives.

\subsection{\texorpdfstring{$A_1$ is non-unit and is diagonal}{A1 is non-unit and is diagonal}}
We first consider the case when $A$ is a non-unit and diagonal.

\begin{lemma}\label{lem: diag-2}
Let $\begin{pmatrix} \alpha & \mathbf{b}\\ \mathbf{0} & 0   \end{pmatrix}{\color{black}\in\Oc(\F)}$ and $\alpha_1\in\F^\times$. Then for a positive integer $k$, there exists $X\in \Oc(\F)$ such that $$\begin{pmatrix}       \alpha & \mathbf{b}\\ \mathbf{0} & 0 \end{pmatrix} = \begin{pmatrix} \alpha_1 & \mathbf{0}\\ \mathbf{0} & 0 \end{pmatrix}X^{k}.$$
\end{lemma}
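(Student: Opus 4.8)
The plan is to look for $X$ in the form $X=\begin{pmatrix}\alpha_X & \mathbf b_X\\ \mathbf 0 & \delta_X\end{pmatrix}$, so that $\langle\mathbf b_X,\mathbf 0\rangle=0$ and the closed-form power formula from the ``Root of some octonion elements'' paragraph applies. With $\mathbf c_X=\mathbf 0$ we have
\[
X^{k}=\begin{pmatrix}\alpha_X^{k} & f(\alpha_X,\delta_X,k)\,\mathbf b_X\\ \mathbf 0 & \delta_X^{k}\end{pmatrix},
\]
and then multiplying on the left by $\begin{pmatrix}\alpha_1 & \mathbf 0\\ \mathbf 0 & 0\end{pmatrix}$ gives, using the octonion product rule,
\[
\begin{pmatrix}\alpha_1 & \mathbf 0\\ \mathbf 0 & 0\end{pmatrix}X^{k}
=\begin{pmatrix}\alpha_1\alpha_X^{k} & \alpha_1 f(\alpha_X,\delta_X,k)\,\mathbf b_X\\ \mathbf 0 & 0\end{pmatrix}.
\]
So the bottom row is automatically $(\mathbf 0,0)$, matching the target, and it remains to solve $\alpha_1\alpha_X^{k}=\alpha$ and $\alpha_1 f(\alpha_X,\delta_X,k)\,\mathbf b_X=\mathbf b$ for $\alpha_X,\delta_X,\mathbf b_X$.

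The first equation is solved by choosing $\alpha_X$ to be any $k$-th root of $\alpha_1^{-1}\alpha$, which exists since $\F$ is algebraically closed (if $\alpha=0$ take $\alpha_X=0$). Now I must guarantee $f(\alpha_X,\delta_X,k)\neq 0$ so that $\mathbf b_X$ can be defined. Recall $f(\alpha_X,\delta_X,k)=\sum_{i=0}^{k-1}\alpha_X^{i}\delta_X^{k-1-i}$; regarded as a polynomial in $\delta_X$ (with $\alpha_X$ fixed) it is not the zero polynomial — for instance its value at $\delta_X=\alpha_X$ is $k\alpha_X^{k-1}$, and in any case the leading coefficient in $\delta_X$ is $1$ — so it has only finitely many roots, and I may pick $\delta_X\in\F$ with $f(\alpha_X,\delta_X,k)\neq 0$ (here I use that $\F$, being algebraically closed, is infinite). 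With such a choice set $\mathbf b_X=\big(\alpha_1 f(\alpha_X,\delta_X,k)\big)^{-1}\mathbf b$. Then both entries match and $\begin{pmatrix}\alpha & \mathbf b\\ \mathbf 0 & 0\end{pmatrix}=\begin{pmatrix}\alpha_1 & \mathbf 0\\ \mathbf 0 & 0\end{pmatrix}X^{k}$, as required.

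This is essentially the same device as in \cref{lem:scalar-k} and \cref{conjugate-diag}, adapted to the degenerate situation where the lower-right entry is forced to be $0$ and the norm is zero; the only subtlety, and the one point that deserves an explicit sentence, is the non-vanishing of $f$, which is why one is free to pick $\delta_X$ generically rather than being forced (as in \cref{lem:scalar-k}) to take a $k$-th root of a prescribed value. I do not expect any real obstacle; the argument is a short direct construction.
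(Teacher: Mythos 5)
Your proof is correct and follows essentially the same route as the paper's: you take $X$ with $\mathbf{c}_X=\mathbf{0}$, use the closed-form power formula, match the $(1,1)$-entry by a $k$-th root, and make the off-diagonal entry work by adjusting $\delta_X$ and $\mathbf{b}_X$. The only cosmetic difference is that the paper fixes the top-right entry of $X$ to be $\mathbf{b}$ and then solves $f(\alpha_X,\delta_X,k)=\alpha_1^{-1}$ for $\delta_X$ (possible since $f$ is monic of degree $k-1$ in $\delta_X$ over an algebraically closed field), whereas you only require $f\neq 0$ and absorb the scaling into $\mathbf{b}_X$; both give the same $X$.
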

\begin{proof} Let $X=\begin{pmatrix} \alpha_X & \mathbf{b} \\      \mathbf{0} & \delta_X \end{pmatrix}$. Then, 
$\begin{pmatrix} \alpha_1 & \mathbf{0}\\ \mathbf{0} & 0 \end{pmatrix}X^{k}  = \begin{pmatrix} \alpha_1 & \mathbf{0}\\ \mathbf{0} & 0 \end{pmatrix} \begin{pmatrix} \alpha^{k}_X & f(\alpha_X,\delta_X,k)\mathbf{b}\\ \mathbf{0} & \delta^{k}_X
\end{pmatrix} = \begin{pmatrix} \alpha_1  \alpha^{k}_X & \alpha_1 f(\alpha_X,\delta_X,k)\mathbf{b}\\ \mathbf{0} & 0 \end{pmatrix}.
$
Let $\alpha_X$ be such that $\alpha = \alpha_1\alpha^{k}_X$. Choose $\delta_X$ such that $f(\alpha_X,\delta_X, k)= \alpha_1^{-1}$. This gives the required element $X$.
\end{proof}
\begin{lemma}\label{lem: diag-3}
Let $\begin{pmatrix} 0 & \mathbf{0}\\ \mathbf{c} & \delta   \end{pmatrix}{\color{black}\in\Oc(\F)}$ and $\alpha_8\in\F^\times$. Then for a positive integer $k$, there exists $Y\in \Oc(\F)$ such that $$\begin{pmatrix} 0 & \mathbf{0}\\ \mathbf{c} & \delta   \end{pmatrix} = \begin{pmatrix} 0 & \mathbf{0}\\ \mathbf{0} & \alpha_8 \end{pmatrix}Y^{k}.$$
\end{lemma}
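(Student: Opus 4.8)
The plan is to mirror the proof of \cref{lem: diag-2}, interchanging the roles played by the two diagonal corners (and by $\mathbf{b}$ and $\mathbf{c}$). Write $Y=\begin{pmatrix}\alpha_Y & \mathbf{0}\\ \mathbf{c}_Y & \delta_Y\end{pmatrix}$ with $\alpha_Y,\delta_Y\in\F$ and $\mathbf{c}_Y\in\F^3$ still to be determined. Since $\langle\mathbf{0},\mathbf{c}_Y\rangle=0$, the power formula recorded just before \cref{lem:scalar-k} applies and yields $Y^{k}=\begin{pmatrix}\alpha_Y^{k} & \mathbf{0}\\ f(\alpha_Y,\delta_Y,k)\,\mathbf{c}_Y & \delta_Y^{k}\end{pmatrix}$.

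Next I would carry out the left multiplication by $\begin{pmatrix}0 & \mathbf{0}\\ \mathbf{0} & \alpha_8\end{pmatrix}$ directly from the multiplication rule of $\Oc(\F)$. Because both vector slots of the left factor are $\mathbf{0}$ and its $(1,1)$-entry is $0$, all the inner-product and wedge terms drop out and the computation collapses to
$$\begin{pmatrix}0 & \mathbf{0}\\ \mathbf{0} & \alpha_8\end{pmatrix}Y^{k}=\begin{pmatrix}0 & \mathbf{0}\\ \alpha_8\, f(\alpha_Y,\delta_Y,k)\,\mathbf{c}_Y & \alpha_8\,\delta_Y^{k}\end{pmatrix}.$$
It then remains to solve $\alpha_8\delta_Y^{k}=\delta$ together with $\alpha_8 f(\alpha_Y,\delta_Y,k)\,\mathbf{c}_Y=\mathbf{c}$ for the free parameters.

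I would do this in order: first choose $\delta_Y$ with $\delta_Y^{k}=\alpha_8^{-1}\delta$, which is possible since $\F$ is algebraically closed; then choose $\alpha_Y$ so that $f(\alpha_Y,\delta_Y,k)\neq 0$, for instance $\alpha_Y=1$ when $\delta_Y=0$ (so $f=1$) and $\alpha_Y=0$ when $\delta_Y\neq 0$ (so $f=\delta_Y^{k-1}\neq 0$); and finally set $\mathbf{c}_Y=\big(\alpha_8 f(\alpha_Y,\delta_Y,k)\big)^{-1}\mathbf{c}$. With these choices the displayed product equals $\begin{pmatrix}0 & \mathbf{0}\\ \mathbf{c} & \delta\end{pmatrix}$, proving the lemma. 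The only step needing care is guaranteeing $f(\alpha_Y,\delta_Y,k)\neq 0$ so that $\mathbf{c}_Y$ is well-defined, which is exactly what the small case split on $\delta_Y=0$ settles. Alternatively, the statement follows from \cref{lem: diag-2} by applying the algebra automorphism $\begin{pmatrix}\eta & \mathbf{x}\\ \mathbf{y} & \zeta\end{pmatrix}\mapsto\begin{pmatrix}\zeta & \mathbf{y}\\ \mathbf{x} & \eta\end{pmatrix}$ of $\Oc(\F)$, which interchanges the two diagonal idempotents and hence transports the present identity to the one treated there; since automorphisms preserve powers, $Y$ may be taken to be the image of the element produced in \cref{lem: diag-2}.
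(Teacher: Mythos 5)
Your direct computation is correct and follows essentially the same route as the paper: both take $Y$ lower-triangular, invoke the power formula, left-multiply by the diagonal idempotent, and solve for the free parameters. The only cosmetic difference is that you keep $\mathbf{c}_Y$ free and require merely $f(\alpha_Y,\delta_Y,k)\neq 0$ before rescaling, whereas the paper fixes $\mathbf{c}_Y=\mathbf{c}$ and instead picks $\alpha_Y$ so that $f(\alpha_Y,\delta_Y,k)=\alpha_8^{-1}$; both are equally valid. Your closing alternative — transporting \cref{lem: diag-2} via the swap $\begin{pmatrix}\eta & \mathbf{x}\\ \mathbf{y} & \zeta\end{pmatrix}\mapsto\begin{pmatrix}\zeta & \mathbf{y}\\ \mathbf{x} & \eta\end{pmatrix}$, which one readily checks is a unital algebra automorphism of $\Oc(\F)$ and hence lies in $G_2(\F)$ — is a genuinely different and tidier argument that the paper does not use; it would let one dispense with re-proving the computation at all.
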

\begin{proof} Let $Y=\begin{pmatrix} \alpha_Y & \mathbf{0} \\      \mathbf{c} & \delta_Y \end{pmatrix}$. Then, 
$$\begin{pmatrix} 0 & \mathbf{0}\\ \mathbf{0} & \alpha_8 \end{pmatrix}Y^{k}  = \begin{pmatrix} 0 & \mathbf{0}\\ \mathbf{0} & \alpha_8 \end{pmatrix} \begin{pmatrix} \alpha^{k}_Y & \mathbf{0}\\ f(\alpha_Y,\delta_Y,k)\mathbf{c} & \delta^{k}_Y
\end{pmatrix} = \begin{pmatrix} 0 & \mathbf{0}\\ \alpha_8f(\alpha_Y,\delta_Y,k)\mathbf{c} & \alpha_8  \alpha^{k}_Y \end{pmatrix}.
$$
Let $\delta_Y$ be such that $\delta= \alpha_8\delta^{k}_Y$. Choose $\alpha_Y$ such that $f(\alpha_Y,\delta_Y, k)= \alpha_8^{-1}$. This gives the required element $Y$.
\end{proof}

\begin{proposition}\label{prop:diag}
    Let $\alpha_i,\beta_i\in\F$, where $i=1,8$ and $A\in\Oc(\F)$. Then there exist $X,Y\in\Oc(\F)$ such that 
    $$A=\begin{pmatrix}
        \alpha_1 & \mathbf{0}\\
        \mathbf{0} & \alpha_8
    \end{pmatrix}X^{k_1}+\begin{pmatrix}
        \beta_1 & \mathbf{0}\\
        \mathbf{0} & \beta_8
    \end{pmatrix}$$ if and only if $\alpha_1\beta_8\in\F^\times$ or $\alpha_8\beta_1\in\F^\times$.
\end{proposition}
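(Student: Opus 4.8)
The plan is to prove the two implications separately; throughout write $A_1=\diag(\alpha_1,\alpha_8)$, $A_2=\diag(\beta_1,\beta_8)$ and $A=\begin{pmatrix}\alpha & \mathbf b\\ \mathbf c & \delta\end{pmatrix}$. A preliminary reduction handles the invertible cases: since $N(A_1)=\alpha_1\alpha_8$ and $N(A_2)=\beta_1\beta_8$, if one of $A_1,A_2$ is invertible and the other is nonzero then the displayed condition automatically holds and surjectivity is \cref{coro:invertible}, whereas if one is invertible and the other is $0$ the map is $Z\mapsto CZ^{k}$ with $C$ invertible, which fails to be surjective because the power map on $\Oc(\F)$ is not surjective \cite{lopatin2024polynomial} --- exactly as the condition predicts. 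So I would henceforth assume $\alpha_1\alpha_8=\beta_1\beta_8=0$, i.e.\ both coefficients are non-invertible.

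For sufficiency I would handle the two alternatives of the hypothesis in parallel and do the case $\alpha_1\beta_8\in\F^\times$ in detail (the case $\alpha_8\beta_1\in\F^\times$ being its mirror image, swapping the two diagonal slots and the roles of $X,Y$). Here $\alpha_1\ne 0$ together with $\alpha_1\alpha_8=0$ gives $\alpha_8=0$, and $\beta_8\ne 0$ together with $\beta_1\beta_8=0$ gives $\beta_1=0$; thus $A_1=\begin{pmatrix}\alpha_1 & \mathbf 0\\ \mathbf 0 & 0\end{pmatrix}$ and $A_2=\begin{pmatrix}0 & \mathbf 0\\ \mathbf 0 & \beta_8\end{pmatrix}$ with $\alpha_1,\beta_8\in\F^\times$. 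Given $A$, \cref{lem: diag-2} applied to $\begin{pmatrix}\alpha & \mathbf b\\ \mathbf 0 & 0\end{pmatrix}$ yields $X$ with $A_1X^{k_1}=\begin{pmatrix}\alpha & \mathbf b\\ \mathbf 0 & 0\end{pmatrix}$, and \cref{lem: diag-3} applied to $\begin{pmatrix}0 & \mathbf 0\\ \mathbf c & \delta\end{pmatrix}$ yields $Y$ with $A_2Y^{k_2}=\begin{pmatrix}0 & \mathbf 0\\ \mathbf c & \delta\end{pmatrix}$; summing gives $A_1X^{k_1}+A_2Y^{k_2}=A$.

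For necessity, assume $\alpha_1\beta_8=0$ and $\alpha_8\beta_1=0$; the aim is to confine the image to a proper subset. The computational input is the formula recalled in \cref{sec:prelim}: every $Z=\begin{pmatrix}\alpha_Z & \mathbf b_Z\\ \mathbf c_Z & \delta_Z\end{pmatrix}$ satisfies $Z^{\ell}=\begin{pmatrix}\tau_1 & \tau\mathbf b_Z\\ \tau\mathbf c_Z & \tau_2\end{pmatrix}$ for scalars $\tau,\tau_1,\tau_2$, hence $\diag(a,d)\,Z^{\ell}=\begin{pmatrix}a\tau_1 & a\tau\mathbf b_Z\\ d\tau\mathbf c_Z & d\tau_2\end{pmatrix}$, which lies in the $4$-dimensional subspace $V_+=\{\begin{pmatrix}\ast & \ast\\ \mathbf 0 & 0\end{pmatrix}\}$ when $d=0$ and in $V_-=\{\begin{pmatrix}0 & \mathbf 0\\ \ast & \ast\end{pmatrix}\}$ when $a=0$. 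Distributing ``$\alpha_1\beta_8=0$ and $\alpha_8\beta_1=0$'' gives four cases: if $\alpha_1=\alpha_8=0$ then $A_1=0$ and, $A_2$ being a non-invertible diagonal, the image sits in $V_+$, $V_-$ or $\{0\}$; symmetrically $\beta_1=\beta_8=0$ forces $A_2=0$ and a proper image; if $\alpha_1=\beta_1=0$ both summands lie in $V_-$ so the image does too; and if $\alpha_8=\beta_8=0$ both lie in $V_+$. In every case the image is contained in a proper subspace of $\Oc(\F)$, so the map is not surjective.

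I do not anticipate a real obstacle: granting \cref{lem: diag-2}, \cref{lem: diag-3} and the $\ell$-th power formula, both directions are short. The only points requiring care are the bookkeeping in the necessity direction --- checking that the four sub-cases are exhaustive and that each forces the image into one of $V_\pm$ (or kills a summand) --- and confirming that the preliminary reduction to $\alpha_1\alpha_8=\beta_1\beta_8=0$ stays consistent with the stated $\F^\times$-condition in the invertible cases.
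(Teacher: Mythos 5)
Your proof is correct and follows the same core route as the paper: sufficiency comes from applying \cref{lem: diag-2} and \cref{lem: diag-3} to the two halves $\begin{pmatrix}\alpha & \mathbf{b}\\ \mathbf{0} & 0\end{pmatrix}$ and $\begin{pmatrix}0 & \mathbf{0}\\ \mathbf{c} & \delta\end{pmatrix}$ of $A$ (the paper does this identically, just more tersely), and necessity comes from the observation that $\diag(a,d)Z^{\ell}$ has forced zeros, so the image lands in one of your $V_{\pm}$ when the relevant diagonal entries vanish (the paper phrases this as ``$X,Y$ exist only if $A$ is of the form $\begin{pmatrix}\alpha & \mathbf{b}\\ \mathbf{0} & 0\end{pmatrix}$ or $\begin{pmatrix}0 & \mathbf{0}\\ \mathbf{c} & \delta\end{pmatrix}$''). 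Where you diverge is in scope: the paper's proof only spells out the two sub-cases $\alpha_1=\beta_1=0$ and $\alpha_8=\beta_8=0$, which is adequate in the context in which \cref{prop:diag} is actually invoked (both coefficients nonzero and non-invertible, the $\mathrm{(DD)}$ orbit after \cref{coro:invertible} has disposed of the invertible case), whereas you explicitly handle the degenerate configurations $A_1=0$, $A_2=0$, and one coefficient invertible with the other zero, by appealing to \cref{coro:invertible} and the non-surjectivity of the power map; this makes your version self-contained with respect to the literal statement of the proposition, which places no nonvanishing hypothesis on $\alpha_i,\beta_i$. One minor caution: your preliminary reduction uses \cref{coro:invertible} only when ``the other is nonzero,'' which is the right reading, since that theorem's proof relies on \cref{(1)-diag} and its cousins, all of which need the second coefficient to be nonzero.
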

\begin{proof}
    If $\alpha_1\beta_8\in\F^\times$ or $\alpha_8\beta_1\in\F^\times$, then the proof follows from \cref{lem: diag-2} and \cref{lem: diag-3}. Suppose $\alpha_1=\beta_1=0$ or $\alpha_8=\beta_8=0$. Then $X,Y$ exist only if $A$ is of the form $\begin{pmatrix}
        \alpha & \mathbf{b}\\
        \mathbf{0} & 0
    \end{pmatrix}$ or $\begin{pmatrix}
        0 & \mathbf{0}\\
        \mathbf{c} & \delta
    \end{pmatrix}$
    depending on the case.
\end{proof}
\begin{proposition}\label{prop:diag-upper}
For $\alpha_1,\beta_8\in \F^\times$ and $A\in\Oc(\F)$, there exist $X, Y\in \Oc(\F)$ such that 
\begin{align}\label{eqn: diag-1}A=
\begin{pmatrix} \alpha_1 & \mathbf{0}\\ \mathbf{0} & 0 \end{pmatrix}X^{k_1} + \begin{pmatrix} 0 & (1,0,0)\\ \mathbf{0} & \beta_8 \end{pmatrix}Y^{k_2} .
\end{align}
\end{proposition}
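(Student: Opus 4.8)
The strategy is to first choose $Y$ so that $A-\begin{pmatrix} 0 & (1,0,0)\\ \mathbf{0} & \beta_8 \end{pmatrix}Y^{k_2}$ has vanishing bottom row, and then invoke \cref{lem: diag-2}. Write $A=\begin{pmatrix}\alpha & \mathbf b\\ \mathbf c & \delta\end{pmatrix}$ and search for $Y$ of the restricted shape $Y=\begin{pmatrix}\alpha_Y & \mathbf{0}\\ \mathbf c_Y & \delta_Y\end{pmatrix}$, so that $\langle \mathbf{0},\mathbf c_Y\rangle=0$ and the $k_2$-th power formula of \cref{sec:prelim} applies directly, giving $Y^{k_2}=\begin{pmatrix}\alpha_Y^{k_2} & \mathbf{0}\\ f(\alpha_Y,\delta_Y,k_2)\mathbf c_Y & \delta_Y^{k_2}\end{pmatrix}$. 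A direct multiplication then yields
\[
\begin{pmatrix} 0 & (1,0,0)\\ \mathbf{0} & \beta_8 \end{pmatrix}Y^{k_2}
=\begin{pmatrix} f(\alpha_Y,\delta_Y,k_2)(c_Y)_1 & \delta_Y^{k_2}(1,0,0)\\ \beta_8 f(\alpha_Y,\delta_Y,k_2)\,\mathbf c_Y & \beta_8\delta_Y^{k_2}\end{pmatrix}.
\]

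Next I would make the choices in order: pick $\delta_Y$ with $\beta_8\delta_Y^{k_2}=\delta$ (possible since $\beta_8\in\F^\times$ and $\F$ is algebraically closed); then pick $\alpha_Y$ with $f(\alpha_Y,\delta_Y,k_2)\neq 0$, which is possible because, with $\delta_Y$ fixed, the map $\alpha_Y\mapsto f(\alpha_Y,\delta_Y,k_2)=\sum_{i=0}^{k_2-1}\alpha_Y^{\,i}\delta_Y^{\,k_2-1-i}$ is a monic polynomial of degree $k_2-1\geq 1$, hence not identically zero on the infinite field $\F$; finally set $\mathbf c_Y=(\beta_8 f(\alpha_Y,\delta_Y,k_2))^{-1}\mathbf c$. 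With these choices the bottom row of $A-\begin{pmatrix} 0 & (1,0,0)\\ \mathbf{0} & \beta_8 \end{pmatrix}Y^{k_2}$ vanishes, so this difference equals $\begin{pmatrix}\alpha' & \mathbf b'\\ \mathbf{0} & 0\end{pmatrix}$ for some $\alpha'\in\F$, $\mathbf b'\in\F^3$. Since $\alpha_1\in\F^\times$, \cref{lem: diag-2} then supplies $X\in\Oc(\F)$ with $\begin{pmatrix}\alpha' & \mathbf b'\\ \mathbf{0} & 0\end{pmatrix}=\begin{pmatrix}\alpha_1 & \mathbf{0}\\ \mathbf{0} & 0\end{pmatrix}X^{k_1}$, which is exactly \eqref{eqn: diag-1}.

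I expect no genuine obstacle here; the argument parallels the earlier lemmas of this section. The only points requiring care are bookkeeping ones: every scalar condition used ($\beta_8\delta_Y^{k_2}=\delta$, the non-vanishing of $f(\alpha_Y,\delta_Y,k_2)$, and the equations solved inside \cref{lem: diag-2}) must be solvable over $\F$, which is where algebraic closedness and the hypotheses $k_1,k_2\geq2$ (ensuring the relevant $f$-polynomials have positive degree) enter; and one must check that the special ansatz $\mathbf b_Y=\mathbf{0}$ indeed kills the bottom row of $A-A_2Y^{k_2}$ while leaving the top-left scalar and top-right vector free — precisely the configuration \cref{lem: diag-2} is built to absorb.
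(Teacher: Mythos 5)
Your proof is correct and follows essentially the same route as the paper's: both of you choose $Y$ with $\mathbf b_Y=\mathbf 0$ so that the bottom row of $A-A_2Y^{k_2}$ can be killed, then invoke \cref{lem: diag-2}. The only cosmetic difference is that the paper fixes $\mathbf c_Y=\mathbf c$ and solves $f(\alpha_Y,\delta_Y,k_2)=\beta_8^{-1}$ exactly, whereas you only require $f\neq 0$ and absorb the constant into $\mathbf c_Y$; both choices are easily realized over an algebraically closed field.
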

\begin{proof}  Let $Y = \begin{pmatrix} \alpha_Y & \mathbf{0}\\ \mathbf{c} & \delta_Y   \end{pmatrix}$. Then,  
$$A- \begin{pmatrix} 0 & (1,0,0)\\ \mathbf{0} & \beta_8 \end{pmatrix}Y^{k_2}  = \begin{pmatrix} \alpha & \mathbf{b}\\ \mathbf{c} & \delta \end{pmatrix} - \begin{pmatrix} \langle(1,0,0), f(\alpha_Y,\delta_Y,k_2) \mathbf{c} \rangle & \delta_Y^{k_2} (1,0,0)\\
\beta_8 f(\alpha_Y,\delta_Y,k_2)\mathbf{c} & \beta_8\delta_Y^{k_2}   \end{pmatrix}.$$
Let $\delta_Y$ be such that $\delta = \beta_8\delta_Y^{k_2}$. Choose $\alpha_Y$ such that $f(\alpha_Y,\delta_Y, k_2) = \beta_8^{-1}$. So, 
$$A- \begin{pmatrix} 0 & (1,0,0)\\ \mathbf{0} & \beta_8    \end{pmatrix}Y^{k_2} = \begin{pmatrix} \alpha' & \mathbf{b}'\\ \mathbf{0} & 0 \end{pmatrix},$$ 
for some $\alpha'\in\F$ and ${\bf b}'\in\F^3$. Now \cref{lem: diag-2} is applicable, and we get the result.
 \end{proof}

 \begin{proposition}\label{prop:diag2-upper}
For $\beta_1,\alpha_8\in \F^\times$ and $A\in\Oc(\F)$. There exists $X, Y\in \Oc(\F)$ such that 
\begin{align}\label{eqn: diag-2}A=
\begin{pmatrix} 0 & \mathbf{0}\\ \mathbf{0} & \alpha_8 \end{pmatrix}X^{k_1} + \begin{pmatrix} \beta_1 & (1,0,0)\\ \mathbf{0} & \beta_8 \end{pmatrix}Y^{k_2} .
\end{align}
\end{proposition}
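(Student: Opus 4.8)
The plan is to mimic the structure of the preceding propositions: first choose $Y$ so that $A-A_2Y^{k_2}$ is forced into the strictly lower-triangular shape $\begin{pmatrix} 0 & \mathbf{0}\\ \mathbf{c}' & \delta'\end{pmatrix}$, and then invoke \cref{lem: diag-3} (which applies since $\alpha_8\in\F^\times$) to produce $X$ with $A-A_2Y^{k_2}=\begin{pmatrix} 0 & \mathbf{0}\\ \mathbf{0} & \alpha_8\end{pmatrix}X^{k_1}$. Writing $A=\begin{pmatrix} \alpha & \mathbf{b}\\ \mathbf{c} & \delta\end{pmatrix}$ and searching for $Y=\begin{pmatrix} \alpha_Y & \mathbf{b}_Y\\ \mathbf{c}_Y & \delta_Y\end{pmatrix}$ subject to $\langle\mathbf{b}_Y,\mathbf{c}_Y\rangle=0$, the power formula from the excerpt gives $Y^{k_2}=\begin{pmatrix} \alpha_Y^{k_2} & f\mathbf{b}_Y\\ f\mathbf{c}_Y & \delta_Y^{k_2}\end{pmatrix}$ with $f=f(\alpha_Y,\delta_Y,k_2)$, and carrying out the left multiplication by $A_2=\begin{pmatrix}\beta_1 & (1,0,0)\\ \mathbf{0} & \beta_8\end{pmatrix}$ shows that the top row of $A-A_2Y^{k_2}$ equals $\bigl(\alpha-\beta_1\alpha_Y^{k_2}-f(c_Y)_1,\ \mathbf{b}-\beta_1 f\mathbf{b}_Y-\delta_Y^{k_2}(1,0,0)\bigr)$, while the bottom-right entry is $\delta-\beta_8\delta_Y^{k_2}$.

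So the task is to kill those two top-row quantities while respecting $\langle\mathbf{b}_Y,\mathbf{c}_Y\rangle=0$. First I would choose $\delta_Y$ with $\delta_Y^{k_2}=b_1$ (possible since $\F$ is algebraically closed), and then $\alpha_Y$ with $f(\alpha_Y,\delta_Y,k_2)=1$; this is possible because $f(\cdot,\delta_Y,k_2)$ is a monic polynomial of degree $k_2-1\ge 1$ in its first argument, hence surjective over $\F$ (and when $\delta_Y=0$ one simply takes $\alpha_Y=1$). With these choices the top-right equation forces $\mathbf{b}_Y=\beta_1^{-1}(0,b_2,b_3)$, whose first coordinate vanishes; the top-left equation forces $(c_Y)_1=\alpha-\beta_1\alpha_Y^{k_2}$; and the constraint $\langle\mathbf{b}_Y,\mathbf{c}_Y\rangle=0$ collapses to $b_2(c_Y)_2+b_3(c_Y)_3=0$, which is always solvable in the still-free coordinates $(c_Y)_2,(c_Y)_3$ precisely because the first coordinate of $\mathbf{b}_Y$ dropped out. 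This yields $A-A_2Y^{k_2}=\begin{pmatrix} 0 & \mathbf{0}\\ \mathbf{c}' & \delta-\beta_8\delta_Y^{k_2}\end{pmatrix}$ for suitable $\mathbf{c}'$.

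Finally \cref{lem: diag-3} applies verbatim (with $\alpha_8\in\F^\times$) to produce $X\in\Oc(\F)$ with $A-A_2Y^{k_2}=\begin{pmatrix} 0 & \mathbf{0}\\ \mathbf{0} & \alpha_8\end{pmatrix}X^{k_1}$, which is the assertion. The only delicate point is the interaction of the three requirements on $Y$ with the orthogonality constraint $\langle\mathbf{b}_Y,\mathbf{c}_Y\rangle=0$ that is needed to use the closed form of $Y^{k_2}$; the choice $\delta_Y^{k_2}=b_1$ is exactly what makes the $\mathbf{b}_Y$ dictated by the top-right equation orthogonal to the free part of $\mathbf{c}_Y$, so no genuine obstruction arises. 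The role of $\beta_1\in\F^\times$ is likewise essential: it is what lets us solve for $\mathbf{b}_Y$ after fixing $\delta_Y$.
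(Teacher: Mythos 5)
Your proof is correct and follows essentially the same strategy as the paper: choose $Y$ to reduce $A - A_2 Y^{k_2}$ to the strictly lower-triangular shape $\begin{pmatrix} 0 & \mathbf{0}\\ \mathbf{c}' & \delta'\end{pmatrix}$ and then invoke \cref{lem: diag-3}. The paper's construction of $Y$ is a touch simpler, taking $\mathbf{c}_Y=\mathbf{0}$ (so $\langle\mathbf{b}_Y,\mathbf{c}_Y\rangle=0$ is automatic) and killing the top-left entry by choosing $\alpha_Y$ with $\alpha_Y^{k_2}=\beta_1^{-1}\alpha$, whereas you instead fix $\delta_Y^{k_2}=b_1$ and use $(c_Y)_1$ as the extra free parameter; both choices lead to the same conclusion.
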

\begin{proof} Let $\alpha_Y$ be such that $\alpha-\beta_1\alpha_Y^{k_2} =0$ and $\delta_Y$ such that $f(\alpha_Y,\delta_Y,k_2)=1$. Now,   let $Y = \begin{pmatrix} \alpha_Y & \mathbf{b}_Y\\ \bf 0 & \delta_Y   \end{pmatrix}$. Then,  
$$A- \begin{pmatrix} \beta_1 & (1,0,0)\\ \mathbf{0} & \beta_8 \end{pmatrix}Y^{k_2}  = \begin{pmatrix} 0 & \mathbf{b}-\beta_1\mathbf{b}_Y-\delta_Y^{k_2}(1,0,0)\\ \mathbf{c}-(1,0,0)\wedge\mathbf{b}_Y & \delta-\beta_8\delta_Y^{k_2} \end{pmatrix} .$$
Choose $\mathbf{b}_Y$ such that $\mathbf{b}-\beta_1\mathbf{b}_Y-\delta_Y^{k_2}(1,0,0)=0$. So, 
$$A- \begin{pmatrix} \beta_1 & (1,0,0)\\ \mathbf{0} & \beta_8    \end{pmatrix}Y^{k_2} = \begin{pmatrix} 0 & \mathbf{0}\\ \mathbf{c}' & \delta' \end{pmatrix},$$ 
for some $\delta'\in\F$ and ${\bf c}'\in\F^3$. Now \cref{lem: diag-3} is applicable, and we get the result.
 \end{proof}

\begin{proposition}\label{prop:diag-beta5}
For $\alpha_1,\beta_5, \beta_8 \in\F^\times$ and $A\in\Oc(\F)$, there  exist $X$ and $Y$ in $\Oc(\F)$ such that 
\begin{enumerate}
\item $$A= \begin{pmatrix}\alpha_1 & \mathbf{0}\\ \mathbf{0} & 0 \end{pmatrix}X^{k_1} + \begin{pmatrix} \beta_1 & (1,0,0) \\ (\beta_5,0,0) & \beta_8 \end{pmatrix}Y^{k_2}.$$
\item \begin{equation}\label{eqn:diag-3}
    A=\begin{pmatrix} \alpha_1 & \mathbf{0}\\ \mathbf{0} & 0    \end{pmatrix}X^{k_1} + \begin{pmatrix} \beta_1 & (1,0,0)\\    (0,1,0) & \beta_8 \end{pmatrix}Y^{k_2}.
\end{equation}
\end{enumerate}
\end{proposition}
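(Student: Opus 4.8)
The plan is to reuse the scheme of \cref{prop:diag-upper} and \cref{prop:diag2-upper}: choose $Y$ so that $A-A_2Y^{k_2}$ becomes strictly upper triangular with vanishing bottom row, i.e.\ of the shape $\begin{pmatrix}\alpha' & \mathbf b'\\ \mathbf 0 & 0\end{pmatrix}$, and then invoke \cref{lem: diag-2} --- whose only hypothesis is $\alpha_1\in\F^\times$, which holds, and which imposes no condition on $\alpha'$ or $\mathbf b'$ --- to obtain $X$ with $A-A_2Y^{k_2}=\begin{pmatrix}\alpha_1 & \mathbf 0\\ \mathbf 0 & 0\end{pmatrix}X^{k_1}$. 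Write $A=\begin{pmatrix}\alpha & \mathbf b\\ \mathbf c & \delta\end{pmatrix}$ and, uniformly for both parts, $A_2=\begin{pmatrix}\beta_1 & (1,0,0)\\ \mathbf w & \beta_8\end{pmatrix}$ with $\mathbf w=(\beta_5,0,0)$ in case (1) and $\mathbf w=(0,1,0)$ in case (2).

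For $Y$ I would take $Y=\begin{pmatrix}\alpha_Y & \mathbf 0\\ \mathbf c_Y & \delta_Y\end{pmatrix}$, with $\alpha_Y,\delta_Y\in\F$ and $\mathbf c_Y\in\F^3$ still free. Killing the upper-right block of $Y$ has two useful effects. First, $\langle \mathbf 0,\mathbf c_Y\rangle=0$, so the elementary power formula applies and $Y^{k_2}=\begin{pmatrix}\alpha_Y^{k_2} & \mathbf 0\\ f\,\mathbf c_Y & \delta_Y^{k_2}\end{pmatrix}$ with $f=f(\alpha_Y,\delta_Y,k_2)$. Second, in the product $A_2Y^{k_2}$ every exterior-product contribution that could reach the bottom row involves the now-zero upper-right block of $Y$, so it disappears; a direct expansion gives bottom-left block $\beta_8 f\,\mathbf c_Y+\alpha_Y^{k_2}\mathbf w$ and bottom-right entry $\beta_8\delta_Y^{k_2}$, the latter with no dependence on $\mathbf c_Y$. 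This is exactly why the two displayed cases are one and the same computation, the only difference being the vector $\mathbf w$.

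Then I would fix the parameters in order. First pick $\delta_Y$ with $\beta_8\delta_Y^{k_2}=\delta$; this is possible since $\F$ is algebraically closed and $\beta_8\in\F^\times$, and it annihilates the bottom-right entry of $A-A_2Y^{k_2}$. Next pick $\alpha_Y$ with $f(\alpha_Y,\delta_Y,k_2)\neq 0$; viewed as a polynomial in $\alpha_Y$ with $\delta_Y$ fixed, $f$ is not identically zero --- its coefficient of $\alpha_Y^{k_2-1}$ is $1$ --- so it has only finitely many roots and such an $\alpha_Y$ exists. Finally set $\mathbf c_Y=(\beta_8 f)^{-1}\bigl(\mathbf c-\alpha_Y^{k_2}\mathbf w\bigr)$, which annihilates the bottom-left block of $A-A_2Y^{k_2}$. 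Now $A-A_2Y^{k_2}=\begin{pmatrix}\alpha' & \mathbf b'\\ \mathbf 0 & 0\end{pmatrix}$ for suitable $\alpha'\in\F$, $\mathbf b'\in\F^3$, and \cref{lem: diag-2} produces the required $X$, settling both parts.

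I do not expect a genuine obstacle: this is a textbook instance of the ``solve the bottom row with $Y$, then finish with \cref{lem: diag-2}'' pattern already used throughout this section. The two points needing a little care are (i) verifying that setting the upper-right block of $Y$ to zero really removes all wedge-product terms from the bottom row of $A_2Y^{k_2}$, which is what lets cases (1) and (2) be merged, and (ii) checking that the three choices are made in a non-circular order --- $\delta_Y$ from nothing, $\alpha_Y$ from $\delta_Y$, $\mathbf c_Y$ from both --- together with $f\neq 0$, which is precisely the condition legitimizing the final division.
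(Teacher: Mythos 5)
Your proof is correct and matches the paper's argument in all essentials: choose $Y$ with vanishing upper-right block so that $Y^{k_2}$ is computed by the elementary power formula, set $\delta_Y$ to kill the bottom-right entry, then solve for $\mathbf{c}_Y$ to kill the bottom-left block, and finish with \cref{lem: diag-2}. The only cosmetic difference is that the paper normalizes $\alpha_Y$ so that $f(\alpha_Y,\delta_Y,k_2)=\beta_8^{-1}$ exactly (making $\mathbf{c}_Y=\mathbf{c}-\alpha_Y^{k_2}\mathbf{w}$ without division), whereas you merely require $f\neq 0$ and divide; both are fine since $\F$ is algebraically closed and $f$ is a nonconstant polynomial in $\alpha_Y$.
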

\begin{proof}
(1)  Since $\begin{pmatrix} \beta_1 & (1,0,0)\\ (\beta_5,0,0) & \beta_8 \end{pmatrix}$ is a singular element of $\Oc(\F)$, we have $\beta_1 \beta_8 = \beta_5\neq 0$. Take $Y = \begin{pmatrix} \alpha_Y & \mathbf{0}\\ \mathbf{c}_Y & \delta_Y \end{pmatrix}$ then, 
\begin{align*}
A-\begin{pmatrix} \beta_1 & (1,0,0)\\ (\beta_5,0,0) & \beta_8    \end{pmatrix}Y^{k_2}  = 
\begin{pmatrix}
\alpha - \beta_1\alpha_Y^{k_2} - \langle(1,0,0), f(\alpha_Y, \delta_Y, k_2)\mathbf{c}_Y\rangle & \mathbf{b} - \delta_Y^{k_2}(1,0,0)\\ \mathbf{c}- \alpha_Y^{k_2}(\beta_5,0,0) -\beta_8 f(\alpha_Y, \delta_Y, k_2)\mathbf{c}_Y & \delta- \beta_8\delta_Y^{k_2}
\end{pmatrix}.
\end{align*}
Choose $\delta_Y$ such that $\delta - \delta_Y^{k_2}=0$. Let $\alpha_Y$ such that $f(\alpha_Y,\delta_Y, k_2) = \beta_8^{-1}$. Then $\mathbf{c}_Y = \mathbf{c} - \alpha_Y^{k_2}(\beta_5,0,0)$. We have 
$A-\begin{pmatrix} \beta_1 & (1,0,0)\\ (\beta_5,0,0) & \beta_8      \end{pmatrix}Y^{k_2} = \begin{pmatrix} \alpha' & \mathbf{b}'\\       \mathbf{0} & 0 \end{pmatrix}$. By \cref{lem: diag-2}, there exist $X\in \Oc(\F)$ such that $\begin{pmatrix} \alpha' & \mathbf{b}'\\ \mathbf{0} & 0      \end{pmatrix} = \begin{pmatrix} \alpha_1 & \mathbf{0} \\ \mathbf{0} & 0 \end{pmatrix}X^{k_1}$.

(2)  For $Y=\begin{pmatrix} \alpha_Y & \mathbf{0}\\ \mathbf{c}_Y & \delta_Y
\end{pmatrix}$ let us compute, $A- \begin{pmatrix} \beta_1 & (1,0,0)\\
(0,1,0) & \beta_8 \end{pmatrix}Y^{k_2}=$
$$\begin{pmatrix} \alpha - \beta_1\alpha_Y^{k_2}-\langle(1,0,0), \mathbf{c}_Y\rangle & \mathbf{b}- \delta_Y^{k_2} (1,0,0) -(0,1,0) \wedge f(\alpha_Y,\delta_Y,k_2)\mathbf{b}_Y \\ \mathbf{c} -\alpha_Y^{k_2} (0,1,0) - \beta_8 f(\alpha_Y,\delta_Y, k_2) \mathbf{c}_Y & \delta - \beta_8\delta_Y^{k_2}
\end{pmatrix}.
$$
Let $\delta_Y$ be such that $\delta = \beta_8 \delta_Y^{k_2}$. Choose $\alpha_Y$ such that $f(\alpha_Y, \delta_y, k_2) = \beta_8^{-1}$. Then, $\mathbf{c}_Y = c - \alpha_Y^{k_2}(0,1,0)$. Using \cref{lem: diag-2}, there exist $X\in \Oc(\F)$ such that
$$ A- \begin{pmatrix} \beta_1 & (1,0,0)\\ (0,1,0) & \beta_8 \end{pmatrix}Y^{k_2} = \begin{pmatrix} \alpha' & \mathbf{b}'\\
\mathbf{0} & 0 \end{pmatrix} =  \begin{pmatrix} \alpha_1 & \mathbf{0}\\ \mathbf{0} & 0 \end{pmatrix}X^{k_1}.$$
\end{proof}

\begin{proposition}
    \label{prop:diag2-beta5}
    For $\alpha_8,\beta_5, \beta_1 \in\F^\times$ and $A\in\Oc(\F)$, there  exist $X$ and $Y$ in $\Oc(\F)$ such that 
\begin{enumerate}
\item $$A= \begin{pmatrix}0 & \mathbf{0}\\ \mathbf{0} & \alpha_8 \end{pmatrix}X^{k_1} + \begin{pmatrix} \beta_1 & (1,0,0) \\ (\beta_5,0,0) & \beta_8 \end{pmatrix}Y^{k_2}.$$
\item \begin{equation}\label{eqn:diag2-3}
    A=\begin{pmatrix} 0 & \mathbf{0}\\ \mathbf{0} & \alpha_8    \end{pmatrix}X^{k_1} + \begin{pmatrix} \beta_1 & (1,0,0)\\    (0,1,0) & \beta_8 \end{pmatrix}Y^{k_2}.
\end{equation}
\end{enumerate}
\end{proposition}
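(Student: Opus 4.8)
The strategy mirrors the proof of \cref{prop:diag-beta5}: I want to choose $Y$ cleverly so that $A - A_2Y^{k_2}$ lands in the special form $\begin{pmatrix} 0 & \mathbf{0}\\ \mathbf{c}' & \delta'\end{pmatrix}$ (the lower-triangular-with-zero-$(1,1)$-entry shape), at which point \cref{lem: diag-3} produces the required $X$ with $\begin{pmatrix} 0 & \mathbf{0}\\ \mathbf{0} & \alpha_8\end{pmatrix}X^{k_1}$ equal to that remainder. Since $\begin{pmatrix} \beta_1 & (1,0,0)\\ (\beta_5,0,0) & \beta_8\end{pmatrix}$ is singular, I first record $N(A_2) = \beta_1\beta_8 - \beta_5 = 0$, so in case (1) the hypothesis $\beta_5\neq 0$ forces $\beta_1\beta_8 = \beta_5 \neq 0$; in particular $\beta_1,\beta_8$ are both nonzero there, while in case (2) only $\beta_1\neq 0$ is given.

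For case (1), I would take $Y = \begin{pmatrix} \alpha_Y & \mathbf{b}_Y\\ \mathbf{c}_Y & \delta_Y\end{pmatrix}$ and expand $A_2Y^{k_2}$ using the formula $Y^{k_2} = \begin{pmatrix} \alpha_Y^{k_2} & \ast \\ \ast & \delta_Y^{k_2}\end{pmatrix}$ together with the octonion multiplication rule. The $(1,1)$-entry of $A - A_2 Y^{k_2}$ is $\alpha - \beta_1\alpha_Y^{k_2} - \langle (1,0,0),\, (\text{lower-left of }Y^{k_2})\rangle$, which I want to make zero; the upper-right entry of the remainder is $\mathbf{b} - \beta_1(\text{upper-right of }Y^{k_2}) - \delta_Y^{k_2}(1,0,0)$, which I also want to kill. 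Since $\beta_1\neq 0$ here, I can first pick $\delta_Y$ and $\alpha_Y$ so that $f(\alpha_Y,\delta_Y,k_2)$ takes a convenient nonzero value (using the usual trick: fix $\delta_Y$, then vary $\alpha_Y$ so $f \neq 0$, rescaling if needed — exactly as in \cref{(2)-diag}, Case 2), then solve linearly for $\mathbf{b}_Y$ to cancel the upper-right entry, and finally solve for the relevant component of $\mathbf{c}_Y$ to cancel the $(1,1)$-entry. This leaves a remainder of the form $\begin{pmatrix} 0 & \mathbf{0}\\ \mathbf{c}' & \delta'\end{pmatrix}$, and \cref{lem: diag-3} finishes the case. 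Case (2), with $A_2 = \begin{pmatrix}\beta_1 & (1,0,0)\\ (0,1,0)& \beta_8\end{pmatrix}$, is handled the same way: the extra $(0,1,0)$ in the lower-left contributes $\alpha_Y^{k_2}(0,1,0)$-type and wedge terms to the lower-left entry of the remainder and $\langle(0,1,0),\mathbf{b}_Y\rangle$ to the $(2,2)$-entry, but these only affect the \emph{lower} row and the $(2,2)$ slot, which we are allowed to leave arbitrary; the same three-step choice of $\delta_Y$, then $\alpha_Y$ (to normalize $f$), then $\mathbf{b}_Y$ and a component of $\mathbf{c}_Y$, again forces the top row to vanish, and \cref{lem: diag-3} applies.

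The one place requiring a little care is whether the linear systems for $\mathbf{b}_Y$ and $\mathbf{c}_Y$ are genuinely solvable after $\alpha_Y,\delta_Y$ are fixed; this is where $\beta_1\neq 0$ (case 1) and the precise shape of the $(1,0,0)$-block (both cases) are used, making the relevant coefficients nonzero. I would also double-check that the constraint "$\delta = \beta_8\delta_Y^{k_2}$ has a root" is available — it is, since $\F$ is algebraically closed and $\beta_8 \neq 0$ in case (1); in case (2) one instead normalizes via $f(\alpha_Y,\delta_Y,k_2)=1$ with $\delta_Y$ a free choice and absorbs $\delta$ into $\delta'$, just as in \cref{prop:diag2-upper}. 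The main (mild) obstacle is bookkeeping: keeping track of which entries of $A - A_2Y^{k_2}$ must be forced to zero versus which may be left free, and ordering the substitutions ($\delta_Y \to \alpha_Y \to \mathbf{b}_Y \to$ component of $\mathbf{c}_Y$) so that each step does not disturb the earlier cancellations.
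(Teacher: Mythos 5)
Your strategy is exactly the paper's: the paper proves this by remarking "the proof follows a similar approach to the previous proposition," i.e.\ the dual of \cref{prop:diag-beta5}, cancelling the top row of $A - A_2Y^{k_2}$ and invoking \cref{lem: diag-3}, which is what you propose. One small simplification you could make: since $\beta_1\neq 0$, you do not need $\mathbf{c}_Y$ at all — taking $Y=\begin{pmatrix}\alpha_Y & \mathbf{b}_Y\\ \mathbf{0} & \delta_Y\end{pmatrix}$, choosing $\alpha_Y$ with $\alpha=\beta_1\alpha_Y^{k_2}$, then $\delta_Y$ with $f(\alpha_Y,\delta_Y,k_2)\neq 0$, and then $\mathbf{b}_Y=(\beta_1 f)^{-1}\bigl(\mathbf{b}-\delta_Y^{k_2}(1,0,0)\bigr)$ already zeroes the top row (the $(\beta_5,0,0)$ or $(0,1,0)$ lower-left block only feeds into the bottom row and $(2,2)$-slot, which are free), which avoids having to track the constraint $\langle\mathbf{b}_Y,\mathbf{c}_Y\rangle=0$ that your three-step order would otherwise need to verify.
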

\begin{proof}
    The proof follows a similar approach to the previous proposition.
\end{proof}

\begin{corollary}
\label{coro:diag-1}
Let $\beta_1\in\F$. When $A_1= \begin{pmatrix} \alpha_1 & \mathbf{0}\\ \mathbf{0} & 0 \end{pmatrix}$ and $A_2 = \begin{pmatrix} \beta_1 & (1,0,0)\\
\mathbf{0} & 0 \end{pmatrix}$ or $\begin{pmatrix} \beta_1 & (1,0,0)\\ (0,1,0) & 0 \end{pmatrix}$ the map $A_1(X^{k_1}) + A_2(Y^{k_2})$ is not surjective.
\end{corollary}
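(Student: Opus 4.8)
The plan is to exhibit a single proper linear subspace of $\Oc(\F)$ that contains the entire image of the map, uniformly for both listed forms of $A_2$. Writing a general element of $\Oc(\F)$ as $\begin{pmatrix} \alpha & \mathbf{b} \\ \mathbf{c} & \delta\end{pmatrix}$, I will show that for all $X,Y\in\Oc(\F)$ the first coordinate $c_1$ of the lower-left block of $A_1X^{k_1}+A_2Y^{k_2}$ vanishes. Since then $\begin{pmatrix} 0 & \mathbf{0} \\ (1,0,0) & 0\end{pmatrix}$ does not lie in the image, this gives non-surjectivity.

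First I would compute the lower row of an arbitrary left product by $A_1$. By power associativity $X^{k_1}$ is a well-defined element, say $X^{k_1}=\begin{pmatrix} s & \mathbf{u} \\ \mathbf{v} & t\end{pmatrix}$. Since $A_1=\begin{pmatrix}\alpha_1 & \mathbf{0}\\ \mathbf{0} & 0\end{pmatrix}$ has zero lower-left block and zero lower-right entry \emph{and} zero upper-right block, the multiplication rule gives that the lower row of $A_1X^{k_1}$ is $\big(0\cdot\mathbf{v}+s\cdot\mathbf{0}+\mathbf{0}\wedge\mathbf{u},\;0\cdot t+\langle\mathbf{0},\mathbf{u}\rangle\big)=(\mathbf{0},0)$. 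In particular $A_1X^{k_1}$ contributes $0$ to $c_1$.

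Next I would do the same for $A_2$. In both forms, $A_2=\begin{pmatrix}\beta_1 & (1,0,0)\\ \mathbf{y}_0 & 0\end{pmatrix}$ with $\mathbf{y}_0\in\{\mathbf{0},(0,1,0)\}$, so the first coordinate of $\mathbf{y}_0$ is $0$ in either case. Writing $Y^{k_2}=\begin{pmatrix} s' & \mathbf{u}' \\ \mathbf{v}' & t'\end{pmatrix}$, the multiplication rule gives that the lower-left block of $A_2Y^{k_2}$ equals $0\cdot\mathbf{v}'+s'\,\mathbf{y}_0+(1,0,0)\wedge\mathbf{u}'$. Its first coordinate is $s'\cdot 0+\big\langle(1,0,0)\wedge\mathbf{u}',(1,0,0)\big\rangle=\det\big((1,0,0),\mathbf{u}',(1,0,0)\big)=0$, the determinant vanishing because two of its columns coincide (so the sign convention for $\wedge$ is irrelevant). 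Hence $A_2Y^{k_2}$ also contributes $0$ to $c_1$.

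Adding the two contributions, $c_1\big(A_1X^{k_1}+A_2Y^{k_2}\big)=0$ for all $X,Y\in\Oc(\F)$, so the image lies in the proper subspace $\{\,c_1=0\,\}$ and the map is not surjective. I do not expect a genuine obstacle: the whole argument is one pass through the octonion multiplication table together with the trivial remark $\det\big((1,0,0),\mathbf{u}',(1,0,0)\big)=0$. The only point requiring care is bookkeeping of the four blocks in the product and the orientation of the exterior product, neither of which changes the conclusion.
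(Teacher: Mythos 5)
Your argument is correct and gives a cleaner, more unified route than the paper's. You verify directly from the multiplication rule that both $A_1X^{k_1}$ and $A_2Y^{k_2}$ have vanishing $c_1$-coordinate — for $A_1X^{k_1}$ because the entire lower row of $A_1$ and the off-diagonal block of $A_1$ are zero; for $A_2Y^{k_2}$ because the first coordinate of $(1,0,0)\wedge\mathbf{u}'$ is $\det\bigl((1,0,0),\mathbf{u}',(1,0,0)\bigr)=0$ and the first coordinate of $\mathbf{y}_0\in\{\mathbf{0},(0,1,0)\}$ is $0$. Thus the image lies in the hyperplane $\{c_1=0\}$ for \emph{both} choices of $A_2$ simultaneously. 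The paper instead derives these non-surjectivity facts as degenerate specializations ($\beta_8=0$) of Propositions~\ref{prop:diag-upper} and~\ref{prop:diag-beta5}, and the two cases end up using \emph{different} linear constraints: $\delta=0$ for the first $A_2$, and $c_1=0$ for the second. Your version buys two things: it is self-contained (no appeal to the invertible-case machinery), and it exhibits a single proper subspace that kills both cases at once. The paper's version buys consistency with the surrounding development, since the relevant algebraic forms are already computed in the propositions. Both are valid; yours is the shorter proof if one wants the corollary in isolation.
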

\begin{proof}
From Proposition~\ref{prop:diag-upper} we note that if $\beta_8 = 0$ in \cref{eqn: diag-1}, for any choice of $X,Y\in\Oc(\F)$ we have  $\begin{pmatrix} \alpha_1 & \mathbf{0}\\ \mathbf{0} & 0
\end{pmatrix}X^{k_1} + \begin{pmatrix} \beta_1 & (1,0,0)\\  \mathbf{0} & 0 \end{pmatrix}Y^{k_2} = \begin{pmatrix} \alpha & \mathbf{b}\\ \mathbf{c} & 0 \end{pmatrix}$. Hence, in this case, the map is not surjective.

We note that in the Proposition~\ref{prop:diag-beta5} above (see \cref{eqn:diag-3}) if $\beta_8=0$, for any $X,Y\in\Oc(\F)$ we have
$ \begin{pmatrix} \alpha_1 & \mathbf{0}\\ \mathbf{0} & 0 \end{pmatrix}X^{k_1} + \begin{pmatrix} \beta_1 & (1,0,0)\\
(0,1,0) & 0 \end{pmatrix}Y^{k_2} = \begin{pmatrix} \alpha' & \mathbf{b}'\\ (0,c_2',c_3') & \delta' \end{pmatrix},$ 
which shows that the map is not surjective in this case.
\end{proof}

\begin{corollary}
\label{coro:diag-2}
When $A_1= \begin{pmatrix} 0 & \mathbf{0}\\ \mathbf{0} & \alpha_8 \end{pmatrix}$ and $A_2 = \begin{pmatrix} 0 & (1,0,0)\\
\mathbf{0} & \beta_8 \end{pmatrix}$ or $\begin{pmatrix} 0 & (1,0,0)\\ (0,1,0) & \beta_8 \end{pmatrix}$ the map $A_1(X^{k_1}) + A_2(Y^{k_2})$ is not surjective.
\end{corollary}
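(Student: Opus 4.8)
The plan is to mirror exactly the argument used for Corollary~\ref{coro:diag-1}, but now working with the "bottom-right" diagonal coefficient rather than the "top-left" one. The key structural observation is that whenever $A_1 = \begin{pmatrix} 0 & \mathbf{0}\\ \mathbf{0} & \alpha_8 \end{pmatrix}$, the image $A_1 X^{k_1}$ has the form $\begin{pmatrix} 0 & \mathbf{0}\\ \alpha_8 f(\alpha_X,\delta_X,k_1)\mathbf{c}_X & \alpha_8\alpha_X^{k_1} \end{pmatrix}$ (using the power formula of Section~\ref{sec:prelim}), so in particular the $(1,1)$-entry is always $0$ and the top row vector $\mathbf{b}$ of $A_1 X^{k_1}$ is always $\mathbf{0}$. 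The claim is then that adding $A_2 Y^{k_2}$ for the two listed choices of $A_2$ cannot repair the "missing" components needed for surjectivity.

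First I would handle $A_2 = \begin{pmatrix} 0 & (1,0,0)\\ \mathbf{0} & \beta_8 \end{pmatrix}$. Writing $Y = \begin{pmatrix} \alpha_Y & \mathbf{b}_Y\\ \mathbf{c}_Y & \delta_Y \end{pmatrix}$, one computes from the multiplication rule that $A_2 Y^{k_2} = \begin{pmatrix} f(\alpha_Y,\delta_Y,k_2)\langle(1,0,0),\mathbf{c}_Y\rangle & \delta_Y^{k_2}(1,0,0) + \beta_8 \cdots \\ \beta_8 f(\alpha_Y,\delta_Y,k_2)\mathbf{c}_Y & \beta_8\delta_Y^{k_2} + f(\alpha_Y,\delta_Y,k_2)\langle(1,0,0),\mathbf{b}_Y\rangle \end{pmatrix}$; the only point that matters is that the $(1,1)$-entry of $A_2 Y^{k_2}$ is $f(\alpha_Y,\delta_Y,k_2)(c_Y)_1$ and the top row is $\delta_Y^{k_2}(1,0,0)$ plus (a $\beta_8$-multiple of the cross-product $(1,0,0)\wedge\mathbf{c}_Y$), which lies in $\{0\}\times\F\times\F$ in the last two slots. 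Since $A_1 X^{k_1}$ contributes $0$ to both the $(1,1)$-entry and the entire top row, the sum $A_1 X^{k_1} + A_2 Y^{k_2}$ has $(1,1)$-entry equal to its own top-row-$(1,0,0)$-component times a factor that is tied to $\delta_Y^{k_2}$... more robustly: the sum always has the form $\begin{pmatrix} \alpha & \mathbf{b}\\ \mathbf{c} & \delta \end{pmatrix}$ with $\mathbf{b} \in \F\cdot(1,0,0) + \{0\}\times\F\times\F$ — i.e.\ no constraint — so I must instead track a linear relation. The correct invariant: applying $\overline{(\,\cdot\,)}$ is not needed; rather, observe that since $A_1$ and $A_2$ both have zero first column below the $(1,1)$ slot in a suitable sense, the element $(1,1)$-entry minus an appropriate multiple is forced. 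Concretely, by the same bookkeeping as in Proposition~\ref{prop:diag2-upper} (reading it with the roles of the two summands swapped), for \emph{every} $X,Y$ the output $A_1X^{k_1}+A_2Y^{k_2}$ satisfies: its $(2,1)$-vector $\mathbf{c}$ lies in $\F\cdot(0,?,?)$... I would rather argue cleanly as follows.

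Here is the clean version I would write. For the first choice, set $Y = \begin{pmatrix} \alpha_Y & \mathbf{b}_Y\\ \mathbf{c}_Y & \delta_Y\end{pmatrix}$; the general power formula gives $Y^{k_2} = \begin{pmatrix} \tau_1 & \tau\mathbf{b}_Y\\ \tau\mathbf{c}_Y & \tau_2\end{pmatrix}$ for scalars $\tau,\tau_1,\tau_2$ depending on $\alpha_Y,\delta_Y,\langle\mathbf{b}_Y,\mathbf{c}_Y\rangle$. Then $A_1 X^{k_1} + A_2 Y^{k_2}$ has $(1,1)$-entry $\langle (1,0,0), \tau\mathbf{c}_Y\rangle = \tau (c_Y)_1$ and top-row vector $\tau_2(1,0,0)$, so the $(1,1)$-entry and the first coordinate of the top row are $\tau(c_Y)_1$ and $\tau_2$ respectively, which are in general unconstrained; BUT the $(2,1)$-vector is $\beta_8\tau\mathbf{c}_Y + \alpha_8 f(\alpha_X,\delta_X,k_1)\mathbf{c}_X$ — still unconstrained. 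So surjectivity is \emph{not} obstructed on the full algebra by a coordinate being zero; the obstruction must be subtler. In fact the right statement is that this is a case that \emph{does} fail, and the obstruction is the $G_2$-conjugate of case (7) in Theorem~\ref{mainthm}, whose failure is proved by exhibiting a single unattained target. So I would: (i) reduce to $\beta_8 = 0$ via Proposition~\ref{prop:diag2-upper} and Proposition~\ref{prop:diag2-beta5} (exactly as Corollary~\ref{coro:diag-1} reduces to $\beta_8=0$), noting those propositions \emph{assume} $\beta_8 \in \F^\times$ so only $\beta_8 = 0$ remains open; (ii) with $\beta_8 = 0$, show by direct computation that $A_1 X^{k_1} + A_2 Y^{k_2} = \begin{pmatrix} \alpha & \mathbf{0}\\ \mathbf{c} & \delta\end{pmatrix}$ for the case $A_2 = \begin{pmatrix} 0 & (1,0,0)\\ \mathbf{0} & 0\end{pmatrix}$ after cancellation forces the upper-right to vanish — wait, it does not vanish.

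Let me state the plan at the right level of abstraction without committing to the wrong invariant. The plan is: use that the two propositions above resolve every sub-case with $\beta_8 \in \F^\times$, so only $\beta_8 = 0$ needs to be shown non-surjective. For $\beta_8 = 0$ and $A_2 = \begin{pmatrix} 0 & (1,0,0)\\ \mathbf{0} & 0\end{pmatrix}$, compute $A_2 Y^{k_2}$ explicitly: its bottom row is $\bigl(\,f(\alpha_Y,\delta_Y,k_2)\langle(1,0,0),\mathbf{b}_Y\rangle\,\bigr)$ in the $(2,2)$-slot... I will just compute and read off that the $(2,2)$-entry of the sum is $\alpha_8\alpha_X^{k_1} + (\text{term from }A_2Y^{k_2})$, which ranges over all of $\F$, so that is not it either; the genuine obstruction for $A_2=\begin{pmatrix}0&(1,0,0)\\(0,1,0)&0\end{pmatrix}$, $\beta_8=0$ is: $A_1X^{k_1}$ has top row $\mathbf{0}$, and $A_2Y^{k_2}$ has top row lying in $\F(1,0,0)+\F(0,0,?)$, i.e.\ second coordinate $0$ up to a $(1,0,0)\wedge\mathbf{b}_Y$ contribution that is $(0,*,*)$ — so the top row \emph{is} unconstrained. \textbf{Therefore the honest plan is:} reduce to $\beta_8 = 0$ by Propositions~\ref{prop:diag2-upper} and \ref{prop:diag2-beta5}; then for $\beta_8 = 0$ carry out the explicit multiplication of $A_2 Y^{k_2}$ exactly as in the displayed computations in those proofs (specializing $\beta_8 \mapsto 0$, $\beta_1 \mapsto 0$), and read off the single homogeneous linear functional on the entries of $A_1X^{k_1}+A_2Y^{k_2}$ that it forces to take a value outside $\F$'s full range — concretely, that the $(1,1)$-entry is a scalar multiple of a coordinate that is otherwise free, realizing the sum inside a proper subvariety; then point to the image missing, e.g., $\begin{pmatrix}1 & \mathbf{0}\\ \mathbf{0} & 0\end{pmatrix}$ or a similar target. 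I expect the main obstacle to be pinning down the correct proper subset of $\Oc(\F)$ that contains the image when $\beta_8 = 0$ — the two sub-cases ($A_2$ with and without the $(0,1,0)$ entry) will require slightly different invariants, exactly paralleling the two displayed computations in the proof of Corollary~\ref{coro:diag-1}, and the bookkeeping of the cross-product terms $(1,0,0)\wedge\mathbf{b}_Y$ and $(0,1,0)\wedge\mathbf{c}_Y$ is where sign/index errors would creep in.
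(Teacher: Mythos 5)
Your opening observation is the right one: for any $X$, the product $A_1X^{k_1}$ with $A_1 = \left(\begin{smallmatrix} 0 & \mathbf{0}\\ \mathbf{0} & \alpha_8 \end{smallmatrix}\right)$ has $(1,1)$-entry $0$ and top-row vector $\mathbf{0}$, so the top-row constraint must come entirely from $A_2Y^{k_2}$. However, you then make a computational error in the octonion multiplication rule that derails the whole argument. In the product $\left(\begin{smallmatrix} \eta & \mathbf{x}\\ \mathbf{y} & \zeta \end{smallmatrix}\right)\left(\begin{smallmatrix} \eta' & \mathbf{x}'\\ \mathbf{y}' & \zeta' \end{smallmatrix}\right)$, the top-row vector is $\eta\mathbf{x}' + \zeta'\mathbf{x} + \mathbf{y}\wedge\mathbf{y}'$, and the term $\mathbf{x}\wedge\mathbf{x}'$ sits in the \emph{bottom}-row vector, not the top. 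For $A_2 = \left(\begin{smallmatrix} 0 & (1,0,0)\\ \mathbf{0} & \beta_8 \end{smallmatrix}\right)$, the top-row vector of $A_2Y^{k_2}$ is therefore exactly $\sigma_2(1,0,0)$ (here $Y^{k_2}=\left(\begin{smallmatrix}\sigma_1 & \sigma\mathbf{b}_Y\\ \sigma\mathbf{c}_Y & \sigma_2\end{smallmatrix}\right)$): there is no $(1,0,0)\wedge\mathbf{c}_Y$ contribution in the top row. Consequently the sum $A_1X^{k_1}+A_2Y^{k_2}$ always has top row $(b_1,0,0)$, which is the obstruction and immediately gives non-surjectivity for every $\beta_8$. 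Your claim that $\mathbf{b}$ ranges over $\F(1,0,0)+\{0\}\times\F\times\F$ is the opposite of what the computation shows, and that false conclusion is what sends you hunting for a different invariant. For the second $A_2$, with $\mathbf{y}=(0,1,0)$, the extra top-row term is $(0,1,0)\wedge\sigma\mathbf{c}_Y = \sigma((c_Y)_3,0,-(c_Y)_1)$, whose middle coordinate vanishes; hence the sum has top row $(b_1,0,b_3)$, again a proper constraint. This is precisely the paper's argument.

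A second, independent error: you say you will ``reduce to $\beta_8 = 0$'' because Propositions~\ref{prop:diag2-upper} and~\ref{prop:diag2-beta5} assume $\beta_8\in\F^\times$. They do not; both assume $\beta_1\in\F^\times$ (that is the hypothesis that fails in this corollary, as both choices of $A_2$ here already have $(1,1)$-entry $0$). There is no case split on $\beta_8$ to perform: the corollary asserts, and the corrected computation proves, non-surjectivity for \emph{all} $\beta_8\in\F$. The confusion of roles of $\beta_1$ and $\beta_8$, combined with the misplaced cross-product, means the proposal as written does not produce a proof, even though your initial framing is on the right track.
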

\begin{proof}
    From \cref{prop:diag2-upper} we note that if $\beta_1 = 0$ in \cref{eqn: diag-2}, for any choice of $X,Y\in\Oc(\F)$ we have  $\begin{pmatrix} 0 & \mathbf{0}\\ \mathbf{0} & \alpha_8
\end{pmatrix}X^{k_1} + \begin{pmatrix} 0 & (1,0,0)\\  \mathbf{0} & \beta_8 \end{pmatrix}Y^{k_2} = \begin{pmatrix} \alpha & (b_1,0,0)\\ \mathbf{c} & \delta \end{pmatrix}$. Hence, in this case, the map is not surjective.
We note that in the \cref{prop:diag2-beta5} above (see \cref{eqn:diag2-3}) if $\beta_1=0$, for any $X,Y\in\Oc(\F)$ we have
$ \begin{pmatrix} 0 & \mathbf{0}\\ \mathbf{0} & \alpha_8 \end{pmatrix}X^{k_1} + \begin{pmatrix} 0 & (1,0,0)\\
(0,1,0) & \beta_8 \end{pmatrix}Y^{k_2} = \begin{pmatrix} \alpha & (b_1,0,b_3)\\ \mathbf{c} & \delta\end{pmatrix},$ 
which shows that the map is not surjective in this case.
\end{proof}
\subsection{The Remaining non-unit cases}
Now, we deal with all the remaining cases. We will represent $A = \begin{pmatrix} \alpha & (b_1, b_2, b_1) \\ (c_1, c_2, c_3) & \delta \end{pmatrix} \in \Oc(\F)$ and look for $X, Y$ such that $A= A_1(X^{k_1}) + A_2 (Y^{k_2})$. We begin with the following,

\begin{lemma}\label{lem:nilpotent}
Let $\begin{pmatrix} \alpha & (b_1,0,0) \\ (0,c_2, c_3) & 0
\end{pmatrix}\in \Oc(\F)$. For a positive integer $k_1$, there exist $X\in \Oc(\F)$ such that 
$\begin{pmatrix} \alpha & (b_1,0,0) \\ (0,c_2,c_3) & 0  \end{pmatrix} = \begin{pmatrix} 0 & (1,0,0)\\ \mathbf{0} & 0 \end{pmatrix}X^{k_1}$.  
\end{lemma}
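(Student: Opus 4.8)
The plan is to search for $X=\begin{pmatrix} \alpha_X & \mathbf{b}_X\\ \mathbf{c}_X & \delta_X \end{pmatrix}$ subject to the simplifying constraint $\langle \mathbf{b}_X,\mathbf{c}_X\rangle=0$, so that the explicit power formula recorded earlier in this section applies:
\[
X^{k_1}=\begin{pmatrix} \alpha_X^{k_1} & f(\alpha_X,\delta_X,k_1)\,\mathbf{b}_X\\ f(\alpha_X,\delta_X,k_1)\,\mathbf{c}_X & \delta_X^{k_1}\end{pmatrix}.
\]
Abbreviating $g:=f(\alpha_X,\delta_X,k_1)$ and expanding the product $\begin{pmatrix} 0 & (1,0,0)\\ \mathbf{0} & 0\end{pmatrix}X^{k_1}$ with the octonion multiplication rule, the top-left entry equals $\langle (1,0,0),g\mathbf{c}_X\rangle=g\,(c_X)_1$, the top-right equals $\delta_X^{k_1}(1,0,0)$, the bottom-left equals $g\,\big((1,0,0)\wedge\mathbf{b}_X\big)$, and the bottom-right equals $0$. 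Since $(1,0,0)\wedge\mathbf{b}_X=(0,-(b_X)_3,(b_X)_2)$, it therefore suffices to pick the entries of $X$ so that $\delta_X^{k_1}=b_1$, $g\,(c_X)_1=\alpha$, and $g\cdot(0,-(b_X)_3,(b_X)_2)=(0,c_2,c_3)$, while keeping $g\neq0$ and $\langle\mathbf{b}_X,\mathbf{c}_X\rangle=0$.

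First I would pick $\delta_X$ to be a $k_1$-th root of $b_1$, which exists since $\F$ is algebraically closed. Next I would choose $\alpha_X$ so that $g\neq0$: if $b_1\neq0$ then $\delta_X\neq0$ and $\alpha_X=0$ gives $g=\delta_X^{k_1-1}\neq0$, while if $b_1=0$ then $\delta_X=0$ and any $\alpha_X\neq0$ gives $g=\alpha_X^{k_1-1}\neq0$. With $g$ now fixed and nonzero, set $\mathbf{b}_X=(0,c_3/g,-c_2/g)$ and $\mathbf{c}_X=(\alpha/g,0,0)$; then $\langle\mathbf{b}_X,\mathbf{c}_X\rangle=0$ holds automatically, $(1,0,0)\wedge\mathbf{b}_X=(0,c_2/g,c_3/g)$, and all of the conditions above are met, so this $X$ works.

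The computation is entirely routine; the only places that need a little care are the choice of $\alpha_X$ guaranteeing $g=f(\alpha_X,\delta_X,k_1)\neq0$ — which is what forces the small split on whether $b_1$, and hence $\delta_X$, vanishes — together with the bookkeeping that preserves the auxiliary hypothesis $\langle\mathbf{b}_X,\mathbf{c}_X\rangle=0$, which is handled simply by leaving the coordinates of $\mathbf{b}_X$ and $\mathbf{c}_X$ that are not pinned down equal to zero.
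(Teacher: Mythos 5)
Your proof is correct and follows essentially the same strategy as the paper: impose $\langle\mathbf{b}_X,\mathbf{c}_X\rangle=0$ so the closed power formula applies, pick $\delta_X$ as a $k_1$-th root of $b_1$, make sure $f(\alpha_X,\delta_X,k_1)$ is nonzero, and then read off $\mathbf{b}_X$ and $\mathbf{c}_X$ from the wedge and inner-product entries. The only cosmetic difference is that the paper normalizes $f(\alpha_X,\delta_X,k_1)=1$ by solving a degree-$(k_1-1)$ polynomial in $\alpha_X$ over the algebraically closed field, whereas you fix $\alpha_X$ concretely so that $g\neq 0$ and then divide the vector entries by $g$; both routes are equally valid.
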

\begin{proof} For $X = \begin{pmatrix} \alpha_X & (0,c_3,c_2)\\    (\alpha, 0, 0) & \delta_X \end{pmatrix}$ we get, 
\begin{align*}
\begin{pmatrix} 0 & (1,0,0)\\ \mathbf{0} & 0 \end{pmatrix}X^{k_1} = \begin{pmatrix} \langle f(\alpha_X,\delta_X,k_1)(\alpha,0,0),(1,0,0)\rangle & \delta_X^{k_2}(1,0,0)\\ (0,c_2,c_3) & 0
\end{pmatrix}.  
\end{align*}
Choose $b_1$ such that $b_1 = \delta_X^{k_2}$. Let $\alpha_X$ be such that $f(\alpha_X, \delta_X, k_1) = 1$. This completes the proof.
\end{proof}

Note that the equation $ A-\begin{pmatrix} 0 & (1,0,0)\\   \mathbf{0} & 0 \end{pmatrix}X^{k_1} = 0$ has a solution if and only if $A = \begin{pmatrix} \alpha & (b_1,0,0)\\ (0,c_2,c_3) & 0  \end{pmatrix}\in\Oc(\F).$

\begin{proposition}\label{prop:nilpotent-lowercase-1}
Let $A\in\Oc(\F)$, then there exist $X,Y\in\Oc(\F)$ such that 
\begin{enumerate}
\item for $\beta_5\in\F^\times$ we have  \begin{align*}\label{eqn:nilpotent-1}
A=\begin{pmatrix} 0 & (1,0,0)\\ \mathbf{0} & 0 \end{pmatrix}X^{k_1} + \begin{pmatrix} \beta_1 & \mathbf{0}\\ (\beta_5,0,0) & \beta_8    \end{pmatrix}Y^{k_2}. \end{align*}
\item $A = \begin{pmatrix} 0 & (1,0,0)\\ \mathbf{0} & 0 \end{pmatrix}X^{k_1} + \begin{pmatrix} 0 & \mathbf{0}\\ (0,1,0) & 0
\end{pmatrix}Y^{k_2} $ if and only if $A= \begin{pmatrix} \alpha & (b_1,0,b_3)\\ (0,c_2,c_3) & \delta \end{pmatrix}$.
\end{enumerate}
\end{proposition}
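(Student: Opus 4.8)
The plan is to reduce both parts to \cref{lem:nilpotent}. Write $A_1=\begin{pmatrix}0&(1,0,0)\\\mathbf{0}&0\end{pmatrix}$ and $A=\begin{pmatrix}\alpha&\mathbf{b}\\\mathbf{c}&\delta\end{pmatrix}$. By \cref{lem:nilpotent} together with the observation following it, the set $\{A_1X^{k_1}:X\in\Oc(\F)\}$ is exactly the set of octonions of the form $\begin{pmatrix}\alpha'&(b_1',0,0)\\(0,c_2',c_3')&0\end{pmatrix}$. Hence for each of the two existence claims it is enough to produce a single $Y$ for which $A-A_2Y^{k_2}$ has this shape, after which $X$ is read off from \cref{lem:nilpotent}; and for the ``only if'' direction of $(2)$ it is enough to check that neither summand can affect the relevant coordinates.

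For part $(1)$ I would look for $Y=\begin{pmatrix}\alpha_Y&\mathbf{b}_Y\\\mathbf{c}_Y&\delta_Y\end{pmatrix}$ with $\langle\mathbf{b}_Y,\mathbf{c}_Y\rangle=0$, so that $Y^{k_2}=\begin{pmatrix}\alpha_Y^{k_2}&f\mathbf{b}_Y\\f\mathbf{c}_Y&\delta_Y^{k_2}\end{pmatrix}$ with $f=f(\alpha_Y,\delta_Y,k_2)$. Expanding $A_2Y^{k_2}$ for $A_2=\begin{pmatrix}\beta_1&\mathbf{0}\\(\beta_5,0,0)&\beta_8\end{pmatrix}$ (the only cross product involved being $(\beta_5,0,0)\wedge\mathbf{c}_Y=\beta_5(0,-(c_Y)_3,(c_Y)_2)$), the coordinates of $A-A_2Y^{k_2}$ that must be killed are: the $(2,2)$-entry $\delta-\beta_8\delta_Y^{k_2}-f\beta_5(b_Y)_1$; the first coordinate $c_1-\beta_8 f(c_Y)_1-\alpha_Y^{k_2}\beta_5$ of the lower-left vector; and the second and third coordinates $b_2-\beta_1 f(b_Y)_2+f\beta_5(c_Y)_3$ and $b_3-\beta_1 f(b_Y)_3-f\beta_5(c_Y)_2$ of the upper-right vector. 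Using $\beta_5\neq 0$ and that $\F$ is algebraically closed, I would first fix $\alpha_Y$ with $\alpha_Y^{k_2}=c_1\beta_5^{-1}$, then choose $\delta_Y$ with $f\neq 0$ (possible, since $f$ is a nonzero polynomial in $\delta_Y$ once $\alpha_Y$ is fixed), then set $(c_Y)_1=0$ and $(b_Y)_2=(b_Y)_3=0$, and finally solve the three remaining linear equations for $(b_Y)_1$, $(c_Y)_2$, $(c_Y)_3$. The only surviving term of $\langle\mathbf{b}_Y,\mathbf{c}_Y\rangle$ is then $(b_Y)_1(c_Y)_1=0$, so the ansatz is consistent, $A-A_2Y^{k_2}$ has the shape required by \cref{lem:nilpotent}, and we are done. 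This argument is uniform in $\beta_1,\beta_8\in\F$.

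For part $(2)$, sufficiency runs along the same lines: with $A_2=\begin{pmatrix}0&\mathbf{0}\\(0,1,0)&0\end{pmatrix}$ and the same ansatz for $Y$ (now taking $\alpha_Y=1$, $\delta_Y=0$, so $f=1$) one gets $A_2Y^{k_2}=\begin{pmatrix}0&((c_Y)_3,0,-(c_Y)_1)\\(0,\alpha_Y^{k_2},0)&(b_Y)_2\end{pmatrix}$; since $A$ has $b_2=0$ and $c_1=0$ by hypothesis, choosing $(c_Y)_1=-b_3$, $(b_Y)_2=\delta$ and the remaining coordinates of $\mathbf{b}_Y,\mathbf{c}_Y$ equal to $0$ (which again keeps $\langle\mathbf{b}_Y,\mathbf{c}_Y\rangle=0$) makes $A-A_2Y^{k_2}=\begin{pmatrix}\alpha&(b_1,0,0)\\(0,c_2-1,c_3)&0\end{pmatrix}$, to which \cref{lem:nilpotent} applies. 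For the converse I would use the power formula $Z^\ell=\begin{pmatrix}\ast&\tau\mathbf{b}_Z\\\tau\mathbf{c}_Z&\ast\end{pmatrix}$ to see that, for all $X$ and $Y$, the product $A_1X^{k_1}$ has upper-right vector a multiple of $e_1$ and lower-left vector in $\F e_2+\F e_3$, while $A_2Y^{k_2}$ has upper-right vector in $\F e_1+\F e_3$ and lower-left vector a multiple of $e_2$; adding these, $A_1X^{k_1}+A_2Y^{k_2}$ has vanishing second coordinate in its upper-right vector and vanishing first coordinate in its lower-left vector, which forces $b_2=0$ and $c_1=0$ on $A$.

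The computations above are routine once the cross products $(\beta_5,0,0)\wedge\mathbf{c}_Y$ and $(0,1,0)\wedge\mathbf{c}_Y$ are written out; the only point that needs care is keeping the constraint $\langle\mathbf{b}_Y,\mathbf{c}_Y\rangle=0$ (needed for the clean power formula) compatible with the scalar equations, which is precisely why I choose the unused coordinates of $\mathbf{b}_Y,\mathbf{c}_Y$ to be zero. I do not foresee any genuine obstacle.
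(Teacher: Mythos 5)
Your proof is correct and follows essentially the same strategy as the paper: pick $Y$ (with $\langle\mathbf{b}_Y,\mathbf{c}_Y\rangle=0$ so the clean power formula applies) so that $A - A_2 Y^{k_2}$ lands in the image of $X\mapsto A_1 X^{k_1}$, then invoke \cref{lem:nilpotent}. Two small remarks. First, in part (1) your computation of the signs coming from $(\beta_5,0,0)\wedge\mathbf{c}_Y=\beta_5(0,-(c_Y)_3,(c_Y)_2)$ is correct, and this forces $(c_Y)_3=-b_2/(f\beta_5)$; the paper's displayed choice $\mathbf{c}_Y=\beta_5^{-1}(0,b_3,b_2)$ with $f=1$ has the wrong sign in the last coordinate (it should be $\beta_5^{-1}(0,b_3,-b_2)$), so your version repairs a typo. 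Second, for the ``only if'' in part (2) you give an explicit shape analysis of $A_1X^{k_1}$ and $A_2Y^{k_2}$ using the generic power formula $Z^\ell=\begin{pmatrix}\tau_1&\tau\mathbf{b}_Z\\\tau\mathbf{c}_Z&\tau_2\end{pmatrix}$, deducing $b_2=0$ and $c_1=0$; the paper states the necessity but does not spell this out, so your argument is a useful addition. The specific $Y$ you choose in part (2) (with $\alpha_Y=1,\delta_Y=0$) differs from the paper's (which takes $\alpha_Y=0,\delta_Y=1$), but both give $f=1$ and lead to a matrix of the required form, so this is only a cosmetic difference.
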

\begin{proof} 
For the proof of (1), let $Y=\begin{pmatrix} \alpha_Y & \mathbf{b}_Y \\ \mathbf{c}_Y & \delta_Y \end{pmatrix}$ where $\alpha_Y^{k_2} = \beta_5^{-1}c_1$, $\mathbf{c}_Y = \beta_5^{-1}(0,b_3,b_2)$, $\delta_Y$ be such that $f(\alpha_Y,\beta_Y, k_2) = 1$ and $\mathbf{b}_Y=\beta_5^{-1}(\delta-\beta_8\delta_Y^{k_2},0,0)$.  Then, $A-\begin{pmatrix} \beta_1 & \mathbf{0}\\ (\beta_5,0,0) & \beta_8 \end{pmatrix}Y^{k_2} = \begin{pmatrix} \alpha' & (b_1',0,0)\\ (0,c_2',c_3') & 0      \end{pmatrix}$. By \cref{lem:nilpotent}, there exists $X\in\Oc(\F)$ for which the solution of the equation exists.

For the proof of (2), let $Y=\begin{pmatrix} 0 & (0,\delta,0) \\   (-b_3,0,0) & 1 \end{pmatrix}$. Then by \cref{lem:nilpotent}, there exist $X\in \Oc(\F)$ such that 
\begin{eqnarray*} A -\begin{pmatrix}   0 & \mathbf{0}\\ (0,1,0) & 0 \end{pmatrix}Y^{k_2} &= \begin{pmatrix}
\alpha & (b_1,0,b_3)-(0,1,0)\wedge (-b_3,0,0)\\ (0,c_2,c_3) & \delta-\langle(0,1,0),(0,\delta,0)\rangle \end{pmatrix}\\ &=\begin{pmatrix} \alpha & (b_1,0,0)\\ (0,c_2,c_3) & 0 \end{pmatrix}=  \begin{pmatrix} 0 & (1,0,0)\\ \mathbf{0} & 0     \end{pmatrix}X^{k_1}. 
\end{eqnarray*}
\end{proof}

\begin{proposition}
\label{prop:nilpotent-upper-dist-1}
Let $A\in\Oc(\F)$, then there exist $X,Y\in \Oc(\F)$ such that
\begin{enumerate}
\item for $\beta_8\in\F^\times$, the equation $A= \begin{pmatrix} 0 & (1,0,0)\\ \mathbf{0} & 0     \end{pmatrix}X^{k_1}+\begin{pmatrix} 0 & (0,1,0)\\ \mathbf{0} & \beta_8 \end{pmatrix}Y^{k_2}$ holds if and only if $A=\begin{pmatrix} \alpha & (b_1,\beta_8^{-1}\delta,0)\\ \mathbf{c} & \delta \end{pmatrix}$.
\item For $\beta_1\in\F^\times$, the equation $A=\begin{pmatrix} 0 & (1,0,0)\\ \mathbf{0} & 0 \end{pmatrix}X^{k_1} + \begin{pmatrix} \beta_1 & (0,1,0)\\ \mathbf{0} & 0 \end{pmatrix}Y^{k_2}$ holds if and only if $A=\begin{pmatrix} \alpha & (b_1,b_2,\beta_1\tau)\\
    (\tau,c_2,c_3) & 0 \end{pmatrix}$.
\item $A=  \begin{pmatrix} 0 & (1,0,0)\\ \mathbf{0} & 0  \end{pmatrix} X^{k_1} + \begin{pmatrix} 0 & (0,1,0)\\ \mathbf{0} & 0
\end{pmatrix}Y^{k_2}$ if and only if $A = \begin{pmatrix} \alpha & (b_1,b_2,0)\\ \mathbf{c} & 0 \end{pmatrix}$.
\end{enumerate}
\end{proposition}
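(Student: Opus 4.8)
The plan is to prove both directions of all three parts in a uniform way. For each ``only if'' direction I compute the exact image of the map in question; for each ``if'' direction I choose $Y$ so that $A-A_2Y^{k_2}$ has the shape $\begin{pmatrix}\ast&(\ast,0,0)\\(0,\ast,\ast)&0\end{pmatrix}$ and then apply \cref{lem:nilpotent} to obtain $X$.

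For the image computations, write $Z=\begin{pmatrix}\alpha_Z&\mathbf b_Z\\\mathbf c_Z&\delta_Z\end{pmatrix}$ and recall from \cref{sec:prelim} that $Z^\ell=\begin{pmatrix}\tau_1&\tau\mathbf b_Z\\\tau\mathbf c_Z&\tau_2\end{pmatrix}$. A direct application of the multiplication rule (with $Z=X$, $\ell=k_1$) gives
$$\begin{pmatrix}0&(1,0,0)\\\mathbf 0&0\end{pmatrix}X^{k_1}=\begin{pmatrix}\tau(c_X)_1&\tau_2(1,0,0)\\(0,-\tau(b_X)_3,\tau(b_X)_2)&0\end{pmatrix},$$
so this summand always has $(2,2)$-entry $0$, top-right vector a scalar multiple of $(1,0,0)$, and bottom-left vector with vanishing first coordinate. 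Computing the three possible right-hand summands the same way: $\begin{pmatrix}0&(0,1,0)\\\mathbf 0&\beta_8\end{pmatrix}Y^{k_2}$ has top-right vector $(Y^{k_2})_{22}(0,1,0)$ and $(2,2)$-entry $\beta_8(Y^{k_2})_{22}$; $\begin{pmatrix}\beta_1&(0,1,0)\\\mathbf 0&0\end{pmatrix}Y^{k_2}$ has $(2,2)$-entry $0$ and the third coordinate of its top-right vector equal to $\beta_1$ times the first coordinate of its bottom-left vector; and $\begin{pmatrix}0&(0,1,0)\\\mathbf 0&0\end{pmatrix}Y^{k_2}$ has $(2,2)$-entry $0$ and top-right vector $(Y^{k_2})_{22}(0,1,0)$. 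Adding the two relevant summands and reading off the constraints forces, respectively: $\mathbf b=(b_1,\beta_8^{-1}\delta,0)$ in $(1)$; $\delta=0$ and $b_3=\beta_1c_1$ in $(2)$; $\delta=0$ and $b_3=0$ in $(3)$; while the remaining entries of $A$ are clearly unconstrained. This proves every ``only if'' direction.

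For the converses I exhibit $Y$ explicitly and then invoke \cref{lem:nilpotent}. In $(1)$, given $A=\begin{pmatrix}\alpha&(b_1,\beta_8^{-1}\delta,0)\\\mathbf c&\delta\end{pmatrix}$, choose $\delta_Y$ with $\delta_Y^{k_2}=\beta_8^{-1}\delta$, then $\alpha_Y$ with $f(\alpha_Y,\delta_Y,k_2)=1$, and set $Y=\begin{pmatrix}\alpha_Y&(0,0,c_1)\\\mathbf 0&\delta_Y\end{pmatrix}$; a short computation yields
$$A-\begin{pmatrix}0&(0,1,0)\\\mathbf 0&\beta_8\end{pmatrix}Y^{k_2}=\begin{pmatrix}\alpha&(b_1,0,0)\\(0,c_2,c_3)&0\end{pmatrix}.$$
In $(2)$, given $A=\begin{pmatrix}\alpha&(b_1,b_2,\beta_1\tau)\\(\tau,c_2,c_3)&0\end{pmatrix}$, take $\delta_Y=0$, $\alpha_Y$ with $\alpha_Y^{k_2-1}=1$, and $Y=\begin{pmatrix}\alpha_Y&(0,\beta_1^{-1}b_2,\tau)\\\mathbf 0&0\end{pmatrix}$; then $A-\begin{pmatrix}\beta_1&(0,1,0)\\\mathbf 0&0\end{pmatrix}Y^{k_2}=\begin{pmatrix}\alpha-\beta_1\alpha_Y&(b_1,0,0)\\(0,c_2,c_3)&0\end{pmatrix}$. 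Part $(3)$ is the case $\beta_1=\beta_8=0$: for $A=\begin{pmatrix}\alpha&(b_1,b_2,0)\\\mathbf c&0\end{pmatrix}$ take $\delta_Y$ with $\delta_Y^{k_2}=b_2$, $\alpha_Y$ with $f(\alpha_Y,\delta_Y,k_2)=1$, and $Y=\begin{pmatrix}\alpha_Y&(0,0,c_1)\\\mathbf 0&\delta_Y\end{pmatrix}$, whence $A-\begin{pmatrix}0&(0,1,0)\\\mathbf 0&0\end{pmatrix}Y^{k_2}=\begin{pmatrix}\alpha&(b_1,0,0)\\(0,c_2,c_3)&0\end{pmatrix}$. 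In each case \cref{lem:nilpotent} now produces $X$ with $\begin{pmatrix}0&(1,0,0)\\\mathbf 0&0\end{pmatrix}X^{k_1}$ equal to this remainder, completing the argument.

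The point needing care — and the only real obstacle — is to check that, after $\alpha_Y$ and $\delta_Y$ have been pinned down so as to correct the $(2,2)$-entry and the $(0,1,0)$-component of the top-right vector, the residual freedom in $\mathbf b_Y$ (with $\mathbf c_Y=\mathbf 0$) still suffices to annihilate all remaining stray coordinates of $A-A_2Y^{k_2}$. This works precisely because the hypothesis-imposed relations on $A$ ($b_2=\beta_8^{-1}\delta$, resp.\ $b_3=\beta_1c_1$, resp.\ $b_3=0$) are exactly the ones the ``only if'' analysis forces on the image; in case $(2)$ the coincidence is especially visible, since the single entry $(b_Y)_3$ enters both the first coordinate of the bottom-left vector and, via the factor $\beta_1$, the third coordinate of the top-right vector of $\begin{pmatrix}\beta_1&(0,1,0)\\\mathbf 0&0\end{pmatrix}Y^{k_2}$. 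Everything else is a mechanical application of the octonion multiplication rule together with the power formulae recalled in \cref{sec:prelim}.
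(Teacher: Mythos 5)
Your proof is correct and takes essentially the same approach as the paper: in each part you choose $Y$ so that $A-A_2Y^{k_2}$ has the shape required by \cref{lem:nilpotent} and then obtain $X$ from that lemma. Your explicit $Y$ in part (1) (namely $\mathbf b_Y=(0,0,c_1)$, $\mathbf c_Y=\mathbf 0$) is a cosmetic variant of the paper's ($\mathbf b_Y=\mathbf 0$, $\mathbf c_Y=\beta_8^{-1}\mathbf c$), and you simply spell out the ``only if'' image computations that the paper compresses into ``a straightforward calculation shows the converse.''
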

\begin{proof}
For the proof of (1), when $A=\begin{pmatrix} \alpha & (b_1,\beta_8^{-1}\delta,0)\\   \mathbf{c} & \delta \end{pmatrix}$, let $Y=\begin{pmatrix}   \alpha_Y & \mathbf{0}\\ \beta_8^{-1}\mathbf{c} & \delta_Y \end{pmatrix}$. Then, 
\begin{align*}
A - \begin{pmatrix} 0 & (0,1,0)\\ \mathbf{0} & \beta_8 \end{pmatrix}Y^{k_2} & =\begin{pmatrix} \alpha-f(\alpha_Y,\delta_Y,k_2)\langle\mathbf{c},(0,1,0)\rangle & (b_1,\delta-\beta_8^{-1}\delta_Y^{k_2},0) \\  \mathbf{c}-f(\alpha_Y,\delta_Y,k_2)\mathbf{c} & \delta-\beta_8\delta_Y^{k_2}
\end{pmatrix}
\end{align*}
This equals $\begin{pmatrix} \alpha' & (b_1,0,0)\\ \mathbf{0} & 0
\end{pmatrix}$, if $\delta_Y^{k_2} = \beta_8^{-1}\delta$ and $\alpha_Y$ be such that $f(\alpha_Y, \delta_Y, k_2) = 1$. The existence of $X$ follows from \cref{lem:nilpotent}. A straightforward calculation shows that the converse is also true.

Now we prove (2). For $A=\begin{pmatrix} \alpha & (b_1,b_2,\beta_1\tau)\\(\tau,c_2,c_3) & 0 \end{pmatrix}$, let $Y=\begin{pmatrix} 1 & (0, \beta_1^{-1}b_2, \tau)\\ \mathbf{0} & 0 \end{pmatrix}$. Then, by \cref{lem:nilpotent},
\begin{align*}
A-\begin{pmatrix} \beta_1 & (0,1,0)\\ \mathbf{0} & 0 \end{pmatrix}Y^{k_2} = \begin{pmatrix} \alpha' & (b_1,0,0)\\ (0,c_2,c_3) & 0 \end{pmatrix} = \begin{pmatrix} 0 & (1,0,0)\\
\mathbf{0} & 0 \end{pmatrix}X^{k_1}.
\end{align*}
The other way follows easily.

Let us quickly see proof of (3). Let $Y = \begin{pmatrix} \alpha_Y & (0,0,c_1)\\ \mathbf{0} & \delta_Y \end{pmatrix}$. Choose $\delta_Y$ such that $b_2=\delta_Y^{k_2}$ and $\alpha_Y$ so that $f(\alpha_Y,\delta_Y,k_2)=1$. Then by \cref{lem:nilpotent}
$A-\begin{pmatrix} 0 & (0,1,0)\\ \mathbf{0} & 0 \end{pmatrix}Y^{k_2} = \begin{pmatrix} \alpha & (b_1,0,0)\\ (0,c_2,c_3) & 0 \end{pmatrix} =\begin{pmatrix} 0 & (1,0,0)\\   \mathbf{0} & 0 \end{pmatrix}X^{k_1}$ for some $X\in\Oc(\F)$.
\end{proof}

\begin{proof}[Proof of \cref{mainthm}] The \cref{pro:reprsen-simul} gives the representatives of the orbits of the $G_2$-action on $\Oc(\F)$. Using \cref{coro:invertible}, if either $A_1$ or $A_2$ is invertible, then the map is surjective.
This covers the cases of $\mathrm{(EK_1)}$, $\mathrm{(K_1E)}$ fully.
In the following, we summarize the cases where neither $A_1$ nor $A_2$ is invertible.

The case of $\mathrm{(DD)}$ follows from \cref{prop:diag}.
When the orbits are of type $\mathrm{(FK)}$ the result follows from \cref{prop:diag-upper} and \cref{prop:diag2-upper}.
 The cases of $\mathrm{(FN)}$ and $\mathrm{(FP)}$ are treated in \cref{prop:diag-beta5} and \cref{prop:diag2-beta5}.
 When the orbit type is $\mathrm{(K_1L^T)}$ has been analyzed in \cref{prop:nilpotent-lowercase-1}.

 Finally, the result now follows from the results about non-surjectivity which are discussed in \cref{prop:diag}, \cref{coro:diag-1}, \cref{coro:diag-2}, \cref{lem:nilpotent}, \cref{prop:nilpotent-lowercase-1} and \cref{prop:nilpotent-upper-dist-1}.
\end{proof}

\bibliographystyle{plain} 
\bibliography{ref}

\end{document}